\newenvironment{inparaenuma}{\begin{inparaenum}[\upshape \bfseries (a) ]}{\end{inparaenum}}
\newcommand{\set}[1]{\left\{#1\right\}}
\newcommand{\sss}{\scriptscriptstyle}
\newcommand{\erdos}{Erd\H{o}s-R\'enyi }
\newenvironment{enumeratea}{\begin{enumerate}[\upshape (a)]}{\end{enumerate}}
\newcommand{\cT}{\mathcal{T}}
\newcommand{\cD}{\mathcal{D}}
\newcommand{\bbR}{\mathbb{R}}
\newcommand{\Ey}{\mathbf{E}}
\def\sss{}
\newtheorem{theorem}{Theorem}[section]
\newtheorem{corollary}[theorem]{Corollary}
\newtheorem{lemma}[theorem]{Lemma}
\theoremstyle{definition}
\newtheorem{remark}[theorem]{Remark}
\theoremstyle{definition}
\newtheorem{defn}{Definition}
\let\plainqed\qedsymbol
\newcommand{\claimqed}{$\lrcorner$}
\newcommand{{\LPC}}{\textbf{LPC}}
\renewcommand{\d}{\operatorname{d}}
\newcommand{\K}{\mathcal{K}}
\newcommand{\G}{\mathcal{G}}
\newcommand{\BP}{\operatorname{BP}}
\numberwithin{equation}{section}
\title[Persistent hubs]{Persistence of hubs in growing random networks}
\author{Sayan Banerjee}
\author{Shankar Bhamidi}
\address{Department of Statistics and Operations Research, 304 Hanes Hall, University of North Carolina, Chapel Hill, NC 27599}
\email{sayan@email.unc.edu, bhamidi@email.unc.edu}
\keywords{temporal networks, generalized attachment networks, continuous time branching processes, network centrality measures,  persistence, martingale concentration inequalities, moderate deviations, functional central limit theorems}
\subjclass[2010]{60C05, 60J85, 60J28}
\date{\today}							
\begin{document}
\maketitle
\begin{abstract}
We consider models of evolving networks $\set{\G_n:n\geq 0}$ modulated by two parameters: an attachment function $f:\mathbb{N}_0 \to \bbR_+$ and a (possibly random) attachment sequence $\set{m_i:i\geq 1}$. Starting with a single vertex, at each discrete step $i\geq 1$ a new vertex  $v_i$ enters the system with $m_i\geq 1$ edges which it sequentially connects to a pre-existing vertex $v\in \G_{i-1}$ with probability proportional to $f(\mbox{degree}(v))$. We consider the problem of emergence of persistent hubs: existence of a finite (a.s.) time $n^*$ such that for all $n\geq n^*$ the identity of the maximal degree vertex (or in general the $K$ largest degree vertices for $K\geq 1$) does not change. We obtain general conditions on $f$ and $\set{m_i:i\geq 1}$ under which a persistent hub emerges, and also those under which a persistent hub fails to emerge. In the case of lack of persistence, for the specific case of trees ($m_i\equiv 1$ for all $i$), we derive asymptotics for the maximal degree and the index of the maximal degree vertex (time at which the vertex with current maximal degree entered the system) to understand the movement of the maximal degree vertex as the network evolves.  A key role in the analysis is played by an inverse rate weighted martingale constructed from a continuous time embedding of the discrete time model. Asymptotics for this martingale, including concentration inequalities and moderate deviations form the technical foundations for the main results. 
\end{abstract}

\section{Introduction and motivation}
\label{sec:intro}
The availability of network data across a wide variety of fields as well as the impact of networks on our daily lives has stimulated an explosion in models and techniques across many different disciplines in the analysis and exploration of these systems. Motivations for formulating models for such systems range from extracting anomalous substructures within real world network data (e.g. community detection), understanding dynamic processes such as epidemic models on real world networks to understanding microscopic rules that lead to emergent behavior such as connectivity. We refer the interested reader to \citet{albert2002statistical,bollobas2001random, durrett2007random,newman2010networks,van2009random} and the references therein for an initial foray into this vast field.

 One important sub-field is the study of dynamic or temporal networks (see \citet{holme2012temporal,masuda2016guidance} and the references therein): networks that evolve and change over time. We give a precise definition in the next section but to fix ideas, imagine an evolving network sequence $\set{\G_n:n\geq 0}$ initialized at some \emph{seed} graph $\G_0$ at time zero; at each discrete step $i\geq 1$ a new vertex enters the system with $m_i$ edges and tries to connect to pre-existing vertices in $\G_{i-1}$. This connection mechanism typically uses a probabilistic scheme which measures the propensity of existing vertices in the network to attract connections from new vertices. In most models, this propensity is some function $f$ of the current degree of the existing vertex.  In the complex networks community such models were initially formulated to understand the dynamic evolution of networks such as social networks (e.g. Internet at the webpage level, Twitter etc.) and in particular to understand simple rules for the emergence of purported characteristics of modern networks such as heavy tailed degree distributions (see \citet{barabasi1999emergence}). 
 
 In the context of the probability community, one question that has attracted increasing attention falls under the branch of \emph{network archeology}: trying to understand the evolution of a network based off the current structure of the network. For example, based on only observing the current topology of the network, can one reconstruct the original seed that was the genesis of the network, the so-called \emph{Adam problem} (\citet{bubeck-devroye-lugosi,bubeck-mossel})? These questions have implications in a number of fields ranging from systems biology (\citet{navlakha2011network,young2018network}) to the detection of sources of malicious information on social networks (\citet{shah2012rumor}). One approach to answering these questions based from a single slice of the network $\G_n$ at some time $n$ is as follows: \begin{inparaenuma}
 	\item For each vertex $v\in \G_n$ compute a specific measure of centrality $\Psi(v)$. This measure depends purely on the topology of the network as the standing assumption is that we do not have access to vertex labels or other information that could indicate age of vertices in the network. 
	\item Output the ``top'' (could be largest or smallest depending on the centrality measure) $K$ vertices measured according to the above centrality measure.  Write $(v_{1,\Psi}^{\sss(n)}, v_{2,\Psi}^{\sss(n)}, \ldots, v_{K, \Psi}^{\sss(n)})$ for this set of vertices in $\G_n$ (for the time being breaking ties arbitrarily). 
 \end{inparaenuma} 
A wide array of centrality measures have been proposed (see \citet{borgatti2005centrality,freeman1977set,newman2005measure}) ranging from degree centrality, eigen-value centrality, rumor centrality (\citet{shah2011rumors,shah2012rumor}) and centroid centrality specific to trees (\citet{bubeck-devroye-lugosi,jordan1869assemblages}). We describe more details of some of these measures in Section \ref{sec:disc}. 

A natural question then in the context of probabilistic models for evolving networks involves the notion of {\bf persistence}.  

\begin{defn}\label{def:pers}
	Fix $K\geq 1$ and a network centrality measure $\Psi$. For an evolving network sequence $\set{\G_n:n\geq 0}$ say that the sequence is {\bf $(\Psi,K)$ persistent} if $\exists$ $n^* < \infty$ a.s. such that for all $n\geq n^*$ the optimal $K$ vertices $(v_{1,\Psi}^{\sss(n)}, v_{2,\Psi}^{\sss(n)}, \ldots, v_{K, \Psi}^{\sss(n)})$ are uniquely specified and remain the same and further the relative ordering amongst these $K$ optimal vertices remains the same. 
\end{defn}
Thus, for example, if $\Psi$ is the degree of a vertex (referred to as degree centrality) and $K=1$ then an evolving network sequence would be $(\Psi, 1)$ persistent if, almost surely, the identity of the maximal degree vertex fixates after finite time. Persistence of centrality measures both allows one to estimate the initial seed using these measures and validates robustness properties of these measures. 

\subsection{Aim of this paper}
In this paper we consider the case where the network model evolves via the arrival of new nodes that connect to pre-existing vertices with probability proportional to some function $f$ of the existing degree with each new node $i$ arriving with $m_i\geq 1$ many edges. We consider the specific case of $\Psi$ representing the degree of a vertex.  We have two major goals: 
\begin{enumeratea}
	\item Derive necessary and sufficient conditions depending on both the attachment function $f$ and nascent edge sequence $\set{m_i:i\geq 1}$  for the emergence of persistence for degree centrality (referred to as persistent {\bf hubs} in \citet{DM,galashin2013existence}). Note that, if persistent hubs emerge, one can in principle use maximal degree vertices to get estimates of the initial seed of the network. Further our results provide insight into regimes where an attachment sequence $\set{m_i:i\geq 1}$ can tip the system from the persistent regime to the lack of persistence regime. 

	\item In the case of trees ($m_i = 1$ for all $i$), when there is lack of persistence, we derive asymptotics for the maximal degree and the index of the maximal degree vertex (time at which the vertex with current maximal degree entered the system) to understand divergence properties of the identity of the hub vertex as the network evolves over time. 
\end{enumeratea}

\subsection{Organization of the paper}
We start in Section \ref{modelspec} with a precise definition of the model. Section \ref{sec:not} defines the key technical constructs required to state and prove our main results as well as global assumptions made on the attachment function.  We describe our main results in Section \ref{sec:results}. Section \ref{sec:disc} contains a discussion of related work and connections to other areas in probabilistic combinatorics. We start the proofs in Section \ref{ctbp} with continuous time constructions of the network model, and connections to branching processes, which play a key role in the analysis. One important ingredient is a martingale derived from tracking rates of growth of degrees in the continuous time construction via inversely weighting the rates. Section \ref{techlem} derives technical estimates for this process including concentration inequalities, functional central limit theorems and moderate deviation results. These technical estimates are then used to complete the proofs of the main results in Section \ref{sec:main-proofs}.


\section{Model definition}\label{modelspec}
The generalized attachment model with attachment function $f: \mathbb{N}_0 \rightarrow (0, \infty)$ and (possibly random) $\mathbb{N}$-valued attachment sequence $\{m_i\}_{i \ge 1}$ is a model for an evolving random graph sequence $\{\G_n\}_{n \ge 0}$ obtained by the following recipe conditional on a realization of the attachment sequence $\{m_i\}_{i \ge 1}$. $\G_0$ comprises one vertex $v_0$ (the root) and zero edges. Given we have obtained $\G_{n-1}$, $\G_n$ is constructed from $\G_{n-1}$ by adding one vertex $v_n$ and $m_n$ directed edges, each with one end connected to $v_n$ and directed away from $v_n$, and the other ends of these edges are connected sequentially to one of the existing vertices $\{v_i\}_{0 \le i \le n-1}$ in $\mathcal{G}_{n-1}$ according to the following rule. For $1 \le j \le m_n$, the $j$-th edge has its other end connected to $v_i$ ($0 \le i \le n-1$) with probability proportional to $f(\text{degree of }v_i)$ (the degree is computed before the connection is made). 

We will find it convenient to index the graph sequence by the number of attached edges (with both ends already connected to respective vertices). Mathematically, we construct the following (directed) random graph sequence $\{\G^*_k\}_{k \ge 0}$ where $\G^*_k$ has $k$ attached edges. Let $s_n := \sum_{i=1}^{n}m_i$ for $n \ge 1$ and $s_0 = 0$. For any $n \ge 1$ and any $s_{n-1} < k \le s_n$, $\G^*_k$ has $n+1$ vertices. For $l \ge 0$, let $d_0(l)$ denote the degree of the root after the $l$-th edge is attached. For $i \ge 1$, denote by $d_i(l), \ l > s_{i-1}$, the degree of the $(i+1)$-th vertex (i.e. vertex $v_i$) after the $l$-th edge is attached and set $d_i(l) = 0$ for all $l \le s_{i-1}$. Here and for the rest of the paper degree denotes the sum of in and out degrees of a vertex.  For $s_{n-1} < k \le s_n$, an edge $e_k$ is added to the graph with one end attached to $v_n$, directed away from $v_n$, and for $0 \le i \le n-1$,
\begin{equation}\label{attachprob}
\mathbb{P}\left(\text{other end of } e_k \text{ attached to } v_i \mid \G^*_{j}, \ j \le k-1\right) = \frac{f(d_i(k-1))}{\sum_{j=0}^{n-1}f(d_j(k-1))}.
\end{equation}
Thus, $\{\G_n\}_{n \ge 0}$ has the same law as $\{\G^*_{s_n}\}_{n \ge 0}$. Note that $\mathcal{G}_n = \mathcal{G}^*_n, n \ge 0$ in the tree case, i.e., when $m_i=1$ for all $i \ge 1$.

We say that a \emph{persistent hub emerges} in the random graph sequence $\{\G_n\}_{n \ge 0}$ (equivalently $\{\G^*_n\}_{n \ge 0}$) if, almost surely, there exists a random finite $n_0 \in \mathbb{N}_0$ such that the same vertex has maximal degree in $\G_n$ (equivalently $\G^*_n$) for all $n \ge n_0$. For $n \ge 0$, define the maximal degree in $\mathcal{G}^*_n$:
\begin{equation}\label{maxdegdef}
d_{max}(n) := \max_{0 \le k \le n}d_k(n).
\end{equation}
We also define the \emph{index of the maximal degree vertex} in $\mathcal{G}^*_n$ by
\begin{equation}\label{inddef}
\mathcal{I}^*_n := \inf\{0 \le i \le n : d_i(n) \ge d_j(n) \text{ for all } j \le n\}.
\end{equation}
In words, this is the index of the oldest vertex possessing the maximal degree at time $n$. Note that $\mathcal{I}^*_n \stackrel{P}{\longrightarrow} \infty$ immediately implies lack of persistence.  

\section{Global assumptions and key constructs}
\label{sec:not}
Let $f$ denote an attachment function. Extend $f$ to all of $[0, \infty)$ by defining $f(x) := f(\lfloor x \rfloor), x \ge 0$. The resulting function is still denoted by $f$. Throughout this article, we assume that $f$ satisfies,
\begin{equation}\label{unipos}
f_* := \inf_{i \ge 0}f(i) >0,
\end{equation}
\begin{equation}\label{sumfinfty}
\sum_{i=0}^{\infty}\frac{1}{f(i)} = \infty.
\end{equation}
Assumption \eqref{sumfinfty} is a natural as, if it is not satisfied, the qualitative behavior of the model changes. In the tree case, i.e., when $m_i =1$ for all $i \ge 0$, methods of \citet{oliveira2005connectivity} can be used to show that if \eqref{sumfinfty} is not satisfied, then almost surely, after some (random) $n$, all the added vertices connect to the same existing vertex and the maximal degree equals the degree of this vertex almost surely. Consequently, the degrees of all vertices except one are bounded almost surely. We are interested in the case when the degrees of multiple vertices diverge.

Crucial to our proof techniques are appropriate embeddings of the discrete dynamics described in Section \ref{modelspec} into continuous time processes, which are constructed from a collection $\{\Ey_i\}_{i \ge 0}$ of i.i.d. rate one exponential random variables. For $l \ge 0$, define
$$
S_k(l) := \sum_{i=0}^{l-1}\frac{\Ey_i}{f^{k}(i)},  \ \ k \in \mathbb{N},
$$
where $f^k(\cdot)$ is the $k$-th power of the function $f(\cdot)$.
Here and throughout the rest of the article, we stick to the convention `$\sum_{0}^{-1} = 0$'.

Define the point process $\xi$ by 
\begin{equation}
\label{eqn:xi-f-def}
\xi(t) := \sum_{n=1}^{\infty}\mathbb{I}[S_1(n) \le t], \ \ t \ge 0.
\end{equation}
Let $\mu(t) : =\mathbb{E}(\xi(t)), t \ge 0$. We can naturally obtain a non-negative measure on $(\mathbb{R}_+, \mathcal{B}(\mathbb{R}_+))$ from $\mu$ (the intensity measure of $\xi$) which we also denote by $\mu$.
For $A \in \mathbb{N}_0$, denote by $\xi_A(\cdot)$ the point process obtained as above with attachment function $f_A(\cdot) := f(A + \cdot)$ replacing $f(\cdot)$. Note that $\xi_0(\cdot) = \xi(\cdot)$. By convention, for any $A \in \mathbb{N}_0$, we take $\xi_A(t) = 0$ for $t < 0$.

Define the functions
\begin{equation}\label{phidef}
\Phi_k(l) := \sum_{i=0}^{l-1}\frac{1}{f^{k}(i)},  \ \ k =1,2, \ \ l \ge 0.
\end{equation}
Extend $\Phi_k(\cdot), k=1,2$ to $\mathbb{R}_+$ via linear interpolation. Note that using the above extensions of $f, \Phi_1, \Phi_2$, 
$$
\Phi_k(x) = \int_0^{x}\frac{1}{f^k(z)}dz, \ x \ge 0, \ k =1,2.
$$
Define
$$
\mathcal{K}(t) = \Phi_2 \circ \Phi_1^{-1}(t), t \ge 0.
$$
We will use the notation $\Phi_k(\infty) := \lim_{l \rightarrow \infty}\Phi_k(l)$.

As is reflected in the statement of the main results in Section \ref{sec:results}, persistence (and lack of it) and the index asymptotics of the persistent hub are characterized by the functions $\Phi_1(\cdot), \Phi_2(\cdot), \K(\cdot)$. In particular, we will see that, under certain assumptions, a persistent hub emerges if and only if $\Phi_2(\infty) < \infty$.  

Write $N(t) := \Phi_1(\xi(t)), t \ge 0$, and $\bar{f} := f \circ \Phi_1^{-1}$.
For $A \in \mathbb{N}_0$, define the process
\begin{equation}\label{martdef}
M_A(t) := \sum_{i=0}^{\xi_A(t) - 1} \frac{1}{f_A(i)} - t, \ t \ge 0.
\end{equation}
By convention, set $M_A(t) = 0$ for $t \le 0$. Denote $M_0(\cdot)$ by $M(\cdot)$. The processes $\{M_A(\cdot) : A \in \mathbb{N}_0\}$ will be shown to be martingales and their concentration properties, functional central limit theorems and moderate deviations will be studied in detail in Section \ref{techlem} which will then be used to prove the main results.

In the tree case ($m_i = 1$ for all $i \in \mathbb{N}$), we will establish persistence for a more general class of attachment functions and derive fine asymptotics of the index of the hub in case of lack of persistence. The key technical tool will be embedding $\{\mathcal{G}_n : n \in \mathbb{N}_0\}$ in a continuous time branching process as described in Section \ref{ctbp}. Let $\mu(\cdot)$ be the intensity measure of the point process $\xi(\cdot)$, namely, $\mu(\cdot)$ is the measure on Borel subsets $\mathcal{B}(\mathbb{R}_+)$ of the non-negative reals which is uniquely defined by $\mu(B) := \mathbb{E}\left(\xi(B)\right), \, B \in \mathcal{B}(\mathbb{R}_+)$. The `exponential rate of growth' of this branching process can be precisely described in terms of the Laplace transform of the intensity measure $\mu(\cdot)$ given by
\begin{equation}
\label{eqn:rho-hat-def}
	\hat{\rho}(\lambda):= \int_0^{\infty} e^{-\lambda t} \mu(dt) = \sum_{k=1}^\infty \prod_{i=0}^{k-1} \frac{f(i)}{\lambda + f(i)}, \ \ \lambda \in (0, \infty). 
\end{equation}
Let 
$\underline{\lambda}:= \inf\{\lambda > 0: \hat{\rho}(\lambda) < \infty\}$. 
For our results in the tree case, we will assume
\begin{equation}
\label{eqn:prop-under-lamb}
\underline{\lambda} < \infty \ \text{ and } \ \lim_{\lambda\downarrow\underline{\lambda}} \hat{\rho}(\lambda) > 1. 
\end{equation}
See Lemma \ref{checkcond} for checkable conditions on $f$ that imply \eqref{eqn:prop-under-lamb}. Note that \eqref{eqn:prop-under-lamb} implies \eqref{sumfinfty}.
By Assumption \eqref{eqn:prop-under-lamb} and the monotonicity of $\hat{\rho}(\cdot)$, there exists a unique $\lambda_*:=\lambda_*(f)$ such that 
\begin{equation}
\label{eqn:malthus-def}
	\hat{\rho}(\lambda_*) = 1. 
\end{equation}
This number is often referred to as the \emph{Malthusian rate} of growth parameter (which gives the exponential growth rate of the branching process).

\section{Main results}
\label{sec:results}
In this section, we describe the main results of this article. Section \ref{general} lays out general conditions on the attachment function $f$ and the attachment sequence $\{m_i\}_{i \ge 1}$ for the emergence (and non-emergence) of a persistent hub. Section \ref{tree} considers the tree case, namely, when $m_i = 1$ for all $i \ge 1$. Using an embedding recipe into branching processes (described in Section \ref{ctbp}) we obtain criteria for the emergence of a persistent hub for a more general class of attachment functions. In particular, we do not require monotonicity of $f$. In the case where a persistent hub does not emerge, we characterize the asymptotic behavior of the index of the hub as the size of the tree grows.

The following assumptions (or subsets of them) will play a fundamental role in showing lack of persistence and obtaining the asymptotics of the index of the hub. They will be explicitly mentioned in the statement of results that require them.\\

\textbf{Assumption C1: } $\Phi_2(\infty) = \infty$.\\

\textbf{Assumption C2: } $\displaystyle{\lim_{\delta \downarrow 0} \limsup_{t \rightarrow \infty} \frac{\K((1+\delta)t)}{\K(t)} = 1}$.\\

\textbf{Assumption C3: } There exist positive constants $t', D$ such that $\K(3t) \le D\K(t)$ for all $t \ge t'$.


\begin{remark}\label{2classes}
Two broad classes of attachment functions for which Assumptions C1, C2 and C3 are satisfied are the following: 
\begin{itemize}
\item[\textbf{Class I :}] $f = f_r f_b$, where $f_r$ is positive and regularly varying (not necessarily monotone) with index $\alpha \in [0, 1/2)$, and there exist $b_1, b_2>0$ such that $b_1 \le f_b(k) \le b_2$ for all $k \in \mathbb{N}_0$.\\
\item[\textbf{Class II :}] $f = g+h$, where $h$ is non-negative, $g$ is positive and non-decreasing, $\sum_{k=0}^{\infty}\frac{1}{g^2(k)} = \infty$ and $\lim_{k \rightarrow \infty} h(k)/g(k) = 0$.
\end{itemize}
See Appendix \ref{appver} for verification of the Assumptions C1, C2 and C3 for these two classes of functions.
\end{remark}
\subsection{Main Results: general case}\label{general}

The following theorem lays out general conditions on the attachment function and the attachment sequence which ensure the emergence of a persistent hub. Recall that $s_n := \sum_{i=1}^{n}m_i$ for $n \ge 1$ and $s_0 = 0$.
\begin{theorem}\label{genper}
Assume $f$ is non-decreasing and there exists $C_f>0$ such that $f(i) \le C_f (i+1)$ for all $i \ge 0$. Also, suppose $\Phi_2(\infty) < \infty$ and that, almost surely,
\begin{equation}\label{edgegrow}
\limsup_{n \rightarrow \infty} \frac{\Phi_1(m_{n})}{\log s_n} \le \frac{1}{8C_f}.
\end{equation}
Then a persistent hub emerges almost surely in the random graph sequence $\{\G_n\}_{n \ge 0}$. 
\end{theorem}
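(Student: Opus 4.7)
The plan is to work in the continuous-time embedding constructed in Section~\ref{ctbp}: each vertex $v_n$ is born at a random continuous time $\tau_n$ with initial degree $m_n$, and, conditionally on $\tau_n$, the evolution $d_n(\tau_n+\cdot)$ has the law of $m_n+\xi_{m_n}$, with the copies across vertices being independent. The central object is the martingale
\[
M^{(n)}(s) \;:=\; M_{m_n}(s) \;=\; \Phi_1(d_n(\tau_n+s))-\Phi_1(m_n)-s, \quad s\ge 0,
\]
whose optional quadratic variation equals $\Phi_2(\infty)-\Phi_2(m_n)\le\Phi_2(\infty)<\infty$. Hence each $M^{(n)}$ is $L^2$-bounded, and, by Doob, so is $\sup_{s\ge0}|M^{(n)}(s)|$; in particular $M^{(n)}(\infty)$ exists a.s.\ and
\[
\Phi_1(d_n(t))-t \;\xrightarrow{\text{a.s.}}\; Z_n \;:=\; \Phi_1(m_n)-\tau_n+M^{(n)}(\infty) \quad \text{as } t\to\infty.
\]
The goal is to identify $i^{*}:=\arg\max_n Z_n$ (to be shown well-defined) as the persistent hub.

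The second step uses the linear bound $f(k)\le C_f(k+1)$ to control $\tau_n$. After $k$ edges have been attached, the total firing rate satisfies $R_k=\sum_i f(d_i)\le C_f\sum_i(d_i+1)\le C_f(2k+(k+1))\le 4C_f k$, since each edge contributes $2$ to the sum of degrees and the vertex count is at most $k+1$. Coupling the edge-addition times $\{T_k\}$ to an i.i.d.\ sequence $\{E_k\}$ of rate-one exponentials gives $T_k-T_{k-1}\ge E_k/(4C_f k)$ pathwise, so summing and applying Kolmogorov's SLLN (the tail variances $\sum k^{-2}$ are summable),
\[
\tau_n \;=\; T_{s_n} \;\ge\; \frac{1}{4C_f}\sum_{k=1}^{s_n}\frac{E_k}{k} \;=\; \frac{\log s_n}{4C_f}\,(1+o(1)) \quad \text{a.s.}
\]
Combined with hypothesis~\eqref{edgegrow}, this yields the crucial lower bound
\[
\tau_n-\Phi_1(m_n)\;\ge\;\frac{\log s_n}{8C_f}\,(1+o(1))\quad\text{a.s., as }n\to\infty,
\]
which exactly matches the constant $1/(8C_f)$ appearing in the hypothesis.

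The final step is an overtaking argument. Observing that
\[
\sup_{t\ge\tau_n}\bigl(\Phi_1(d_n(t))-t\bigr)\;=\;\Phi_1(m_n)-\tau_n+\sup_{s\ge0}M^{(n)}(s),
\]
it suffices to bound the probability that $\sup_s M^{(n)}(s)$ exceeds a suitable multiple of $\log s_n$. This is where the concentration inequalities promised in Section~\ref{techlem} enter: leveraging both $[M^{(n)}]_\infty\le\Phi_2(\infty)$ and the bounded jump sizes $\le 1/f_*$, one obtains an exponential tail bound on $\mathbb{P}\bigl(\sup_{s}M^{(n)}(s)\ge x\bigr)$ that, when combined with the previous step and evaluated at a constant multiple of $\log s_n$ strictly smaller than $\tau_n-\Phi_1(m_n)$, becomes summable in $n$. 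Borel--Cantelli then yields $\sup_t(\Phi_1(d_n(t))-t)\to-\infty$ a.s., so $Z_n\to-\infty$; hence $i^{*}$ is a.s.\ attained at a finite random index, and is almost surely unique since $Z_i-Z_j$ has an absolutely continuous law (via the contribution of $\tau_j-\tau_i$). The pointwise convergence $\Phi_1(d_i(t))-t\to Z_i$ for each $i$, combined with the uniform bound $\sup_{i\ge N}\sup_t(\Phi_1(d_i(t))-t)\to-\infty$ as $N\to\infty$ just established, implies that $v_{i^{*}}$ is eventually the strict maximum-degree vertex. The \emph{main obstacle} is precisely the concentration step: $L^2$-boundedness alone (Chebyshev) gives only $\mathcal{O}(1/(\log s_n)^2)$, which is not summable in $n$ when $s_n\asymp n$; the refined martingale concentration inequalities of Section~\ref{techlem} (meshing bounded optional quadratic variation with bounded jump sizes) are therefore essential, and their derivation is the main technical work required to close this argument.
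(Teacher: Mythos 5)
Your proposal is correct in outline but takes a genuinely different route from the paper's, and contains one reasoning flaw beyond the concentration step you flag as incomplete. The paper does not construct a full continuous-time embedding for general $\{m_i\}$: Section~\ref{ctbp} handles only the tree case, and the proof of Theorem~\ref{genper} instead compares each vertex $v_n$ to the root $v_0$ through the \emph{pairwise} coupling of Lemma~\ref{embmult}, with the logarithmic lower bound on $\Phi_1(d_0(k))$ obtained from the coupling in Lemma~\ref{coupling}(i) together with Proposition~1.4 of \cite{DM}. You instead posit a full embedding, bound $\tau_n$ below directly via the total-rate estimate and Kolmogorov, and identify the hub as the index maximizing $Z_n$. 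This route is viable: the full embedding does hold here (by memorylessness one may insert $v_n$'s fresh clock at the stopping time $\tau_n$, and it is essential that $v_n$'s clock stays dormant until then, since during $(T_{s_{n-1}}, T_{s_n}]$ only $v_0,\dots,v_{n-1}$ are targetable), but it is not actually furnished by Section~\ref{ctbp} and would need to be established. It also buys a mild simplification: since your lower bound on $\tau_n$ holds almost surely, the overtaking step needs only the one-sided tail bound \eqref{concfin2} on $\sup_s M_{m_n}(s)$, whereas the paper must additionally control $\inf_s M_{d_0(s_n)}(s)$ through \eqref{concfin}.

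The genuine gap is the uniqueness argument. You attribute the absolute continuity of $Z_i-Z_j$ to the contribution of $\tau_j-\tau_i$, but the birth times $\tau_n$ are functionals of the very clocks that generate the $M^{(\cdot)}(\infty)$ terms; they are correlated with them and are not known to carry a conditional density, so this reasoning does not close. The correct source of absolute continuity is the martingale limit itself: for $i>j$, the random variable $M^{(i)}(\infty)$ depends on $\xi_{m_i}$ only, hence is independent of $(\tau_i,\tau_j,M^{(j)}(\infty))$, and its law is absolutely continuous by Lemma~\ref{ac}; conditioning on everything else then gives $Z_i-Z_j$ a density. This is exactly what Corollary~\ref{distlim} supplies, and it is what the paper invokes in the final line of its proof. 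With that repair, and with the concentration estimates of Lemma~\ref{conc} in hand, your argument closes.
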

The proof of Theorem \ref{genper} readily implies the following corollary.
\begin{corollary}\label{kpersist}
Under the assumptions on $f$ in Theorem \ref{genper}, for any $K \in \mathbb{N}$, there exists a random $n_K^* \in \mathbb{N}$ such that the set of vertices with the largest $K$ degrees in $\mathcal{G}_n$ remains the same for all $n \ge n_K^*$. Moreover, the degrees of all these vertices are distinct and their relative ordering of degrees remains the same for all $n \ge n_K^*$.
\end{corollary}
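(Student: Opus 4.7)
The plan is to extract two key ingredients from the proof of Theorem \ref{genper} and combine them: (a) almost-sure pairwise stabilization of the relative ordering of degrees for any two fixed vertices, and (b) a summability estimate showing that only finitely many vertices can ever occupy one of the top $K$ positions.

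First, I would extract from the continuous-time embedding described in Section \ref{ctbp} and from the martingales $M_A$ of \eqref{martdef} that, under $\Phi_2(\infty) < \infty$, each vertex $v_i$ admits an almost-sure asymptotic degree-growth limit $X_i$ (essentially the terminal value of a suitably normalized martingale associated to $v_i$), and that these limits are almost surely pairwise distinct across different $i$. The distinctness is inherited from the absolute continuity of the exponential clocks $\{\Ey_i\}$ used to drive the embedding, so $\mathbb{P}(X_i = X_j) = 0$ for each fixed $i \ne j$. A countable union over pairs then gives, almost surely, a well-defined total order $\preceq$ on $\{v_i : i \ge 0\}$ determined by eventual degree comparison, together with the fact that for every fixed pair the sign of $d_i(n) - d_j(n)$ is constant for all $n$ large enough.

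Second, the proof of Theorem \ref{genper} supplies, via submartingale inequalities and \eqref{edgegrow}, a summable bound of the form
\begin{equation*}
\sum_{j \ge 1} \mathbb{P}\bigl(v_j \text{ has maximal degree in } \mathcal{G}_n \text{ for some } n \ge n_0\bigr) < \infty
\end{equation*}
for a large enough $n_0$. The same computation, applied with "maximal" replaced by "among the top $K$", costs only a factor depending on $K$: a late-arriving vertex that reaches the top $K$ must in particular overtake some currently top-$K$ vertex, and the required submartingale estimate for such an overtake is identical (up to constants) to the one used for overtaking the current maximum. Borel--Cantelli then yields a random but almost surely finite set $\mathcal{F}$ of vertices containing the top $K$ of $\mathcal{G}_n$ for all $n$ sufficiently large.

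Finally, within $\mathcal{F}$ there are only finitely many pairs to compare, and each pairwise ordering stabilizes by the first ingredient; taking $n_K^*$ to be the maximum of these stabilization times together with the time beyond which the top $K$ lie in $\mathcal{F}$ yields the corollary, with distinctness of the $K$ top degrees and permanence of their relative ordering both immediate from pairwise stabilization. The main obstacle I anticipate is justifying cleanly the "top $K$" version of the summable tail bound above: the argument in Theorem \ref{genper} is tailored to the current leader, and some bookkeeping is needed to verify that the events "$v_j$ eventually enters a top-$K$ slot" admit a bound of the same qualitative form, with $K$-dependent constants, without losing summability in $j$. Everything else is a routine packaging of countable-union and Borel--Cantelli arguments.
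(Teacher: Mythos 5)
Your overall architecture is right: first obtain a finite random set that eventually contains the top $K$, then use pairwise stabilization of degree orderings (Corollary \ref{distlim}) to fix the identities and their relative order within that finite set. The first ingredient you describe (almost-sure pairwise distinctness of the martingale limits) matches the paper's Lemma \ref{ac} and Corollary \ref{distlim}, and the final packaging step is routine. What is not yet a proof is exactly the step you flag as an anticipated "obstacle," and I think you underestimate it.

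The summable bound in the proof of Theorem \ref{genper} controls the events $E_n = \{d_n(k) \ge d_0(k) \text{ for some } k \ge s_n\}$ — overtaking \emph{the root}. Your phrase "a late-arriving vertex that reaches the top $K$ must overtake some currently top-$K$ vertex" is true but is the wrong reformulation: the "current top-$K$ vertex" is a moving random target, and the proof's estimate is not stated for such a target. The reformulation you actually need is by pigeonhole against \emph{fixed} vertices: if $v_n$ is in the top $K$ at some time $k$, then at most $K-1$ of $\{v_0,\dots,v_{K-1}\}$ can also be in the top $K$ at that time, so at least one $v_l$ with $l<K$ satisfies $d_l(k)\le d_n(k)$. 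Hence
$$\mathbb{P}(v_n \text{ is ever in the top } K) \le \sum_{l=0}^{K-1} \mathbb{P}(d_n(k)\ge d_l(k) \text{ for some } k\ge s_n),$$
and you must show each term is summable in $n$.

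This is the genuine gap: for $l \ge 1$ the argument of Theorem \ref{genper} does not directly apply, because its Borel--Cantelli estimate crucially uses the event $A_m$ built from the a.s. lower bound \eqref{gp1} on $\Phi_1(d_0(k))/\log k$, which comes from the root-specific coupling of Lemma \ref{coupling}(i). To control overtaking $v_l$ you need the analogue of \eqref{gp1} with $d_l$ in place of $d_0$, and events $A_m^{(l)}$ in place of $A_m$. That analogue does hold — the same coupling argument works once you start the dominated process at time $s_l$ with initial value $m_l$, since the inequality $\sum_{j=0}^{s^{-1}(k)-1} f(d_j(k)) \le C_f(2k + s^{-1}(k)) \le 3C_f(k+1)$ is not special to the root and Proposition 1.4 of \cite{DM} applies verbatim — but this has to be said; it is not "up to constants the same submartingale estimate," since without the extension of \eqref{gp1} to $d_l$ there is nothing to plug into Lemma \ref{conc}. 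Once the finitely many ($l<K$) lower bounds and Borel--Cantelli applications are in hand, the intersection $\bigcap_{l<K}A_m^{(l)}$ has probability tending to one as $m\to\infty$, and on it only finitely many $v_n$ ever overtake any of $v_0,\dots,v_{K-1}$; Corollary \ref{distlim} then finishes exactly as you say.
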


\begin{remark}
Theorem \ref{genper} and Corollary \ref{kpersist} establish a regime where, for any fixed $K \in \mathbb{N}$, the vertices with the $K$ largest degrees persist as the network size grows. This indicates the possibility of developing root detection algorithms in such networks based on a prescribed number of vertices with the largest degrees (the number depending on the error tolerance of the algorithm). Such algorithms will require more quantitative control on the random variables $n_K^*, \, K \in \mathbb{N},$ described in Corollary \ref{kpersist}, like bounds on their expectations, tail probabilities, etc. A closer look at the proof of Theorem \ref{genper} shows that this involves obtaining sharp estimates on probabilities of the events $\{A_m^c\}_{m \ge 1}$, where $A_m$ is defined in \eqref{amdef}. We are investigating these questions in follow-up work.
\end{remark}

The following theorem gives general conditions on the attachment function $f$ under which a persistent hub fails to emerge.

\begin{theorem}\label{perfail}
Suppose Assumptions C1 and C2 hold. For any $A_1, A_2 \in \mathbb{N}_0$, let $\xi_{A_1}(\cdot)$  and $\xi_{A_2}(\cdot)$ be independent point processes obtained using respective attachment functions $f_{A_1}(\cdot)$ and $f_{A_2}(\cdot)$ as defined in Section \ref{sec:not} (on the same probability space), and let $M_{A_1}(\cdot)$ and $M_{A_2}(\cdot)$ be the corresponding martingales obtained by \eqref{martdef}. 
Then the process
$$
B^{(n)}(t) := \frac{1}{\sqrt{2n}}\left(M_{A_1}(\K^{-1}(nt) - M_{A_2}(\K^{-1}(nt)\right), \ t \ge 0, \ n \in \mathbb{N},
$$
converges weakly to standard Brownian motion in $D([0,\infty)  :  \mathbb{R})$ as $n \rightarrow \infty$.
In particular, almost surely, a persistent hub does not emerge in the random graph sequence $\{\G_n\}_{n \ge 0}$.
\end{theorem}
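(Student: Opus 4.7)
The plan is to split the argument into (a) a functional central limit theorem for $B^{(n)}$ obtained from a standard martingale FCLT together with independence, and (b) translating the oscillation of the limiting martingale difference into almost-sure failure of persistence via the branching process embedding of Section \ref{ctbp}.

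For (a) I would first identify the predictable quadratic variation of each $M_A$. Since $\xi_A$ is a pure-birth process with state-dependent rate $f_A(k)$ and $M_A(t) = \sum_{i=0}^{\xi_A(t)-1} 1/f_A(i) - t$, standard compensation gives
\[
\langle M_A \rangle_t \;=\; \int_0^t \frac{1}{f_A(\xi_A(s-))}\, ds.
\]
Evaluating at the $k$-th jump time $T_A(k)$ and using that the inter-jump durations equal $\Ey_i/f_A(i)$ yields $\langle M_A \rangle_{T_A(k)} = \sum_{i=0}^{k-1} \Ey_i/f_A^2(i)$, which concentrates around $\Phi_2(A+k) - \Phi_2(A)$; combined with $T_A(k) \approx \Phi_1(A+k)-\Phi_1(A)$, this gives $\langle M_A \rangle_t \approx \mathcal{K}(t+\Phi_1(A))-\Phi_2(A)$. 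Assumption C2 implies $\mathcal{K}(t+c)/\mathcal{K}(t) \to 1$ for any fixed constant $c$ (take $\delta = c/t \to 0$), so after the time change $t \mapsto \mathcal{K}^{-1}(nt)$ and scaling by $1/\sqrt{n}$, the predictable quadratic variation of the rescaled martingale converges uniformly on compacts to the identity function $t \mapsto t$. Jumps of $M_A$ are bounded by $1/f_*$ via \eqref{unipos}, so the Lindeberg condition is automatic after $\sqrt{n}$ scaling, and a standard martingale FCLT delivers $M_A(\mathcal{K}^{-1}(n\cdot))/\sqrt{n} \Rightarrow W$ in $D([0,\infty),\mathbb{R})$---precisely the individual FCLT that I would take as input from Section \ref{techlem}. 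Independence of $M_{A_1}$ and $M_{A_2}$ upgrades this to joint weak convergence to independent Brownian motions $(W_1, W_2)$, and the continuous map $(x,y) \mapsto (x-y)/\sqrt{2}$ yields $B^{(n)} \Rightarrow (W_1-W_2)/\sqrt{2} \stackrel{d}{=} W$.

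For (b) I would invoke the embedding of Section \ref{ctbp} to associate to any two distinct vertices of $\{\G_n\}$ a pair of independent shifted pure-birth processes whose associated martingales $M_{A_1}, M_{A_2}$ are of the type handled in (a). Using the monotone relation $\xi_A(t) = \Phi_{1,A}^{-1}(t+M_A(t))$, the sign of the difference in the two vertices' continuous-time degrees is controlled, up to a bounded deterministic offset from the birth-time data, by the sign of $M_{A_1}(t)-M_{A_2}(t)$. The FCLT of (a), applied on a macroscopic time window $[\mathcal{K}^{-1}(n),\mathcal{K}^{-1}(nT)]$, shows that the martingale difference exhibits both positive and negative values there with probability at least $p_0>0$ for all large $n$, since Brownian motion on $[1,T]$ takes both signs with high probability as $T$ grows. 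The main obstacle is upgrading this distributional oscillation to an almost-sure statement of infinitely many sign changes: I would obtain it either from a Kolmogorov 0-1 law on the tail $\sigma$-algebra generated by the driving exponentials $\{\Ey_i\}$, or, more robustly, from a law-of-the-iterated-logarithm/moderate-deviations bound on $M_A$ developed in Section \ref{techlem}, which would in fact give $\limsup_t (M_{A_1}-M_{A_2})(t)/\sqrt{\mathcal{K}(t)\log\log\mathcal{K}(t)} > 0$ and the symmetric $\liminf < 0$ almost surely. Once a.s.\ infinite sign changes of $M_{A_1}-M_{A_2}$ are established, the order of the two vertex degrees flips infinitely often, so no persistent hub can emerge.
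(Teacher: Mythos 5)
Your part (a) is correct and follows the same route as the paper: the individual FCLT for each $M_A(\K^{-1}(n\cdot))/\sqrt{n}$ is exactly Lemma \ref{fcltper}, and the upgrade to the difference via independence and the map $(x,y)\mapsto(x-y)/\sqrt{2}$ is precisely what the paper does.

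Part (b) has a genuine gap, which you in fact flag yourself as ``the main obstacle.'' Neither of your two proposed fixes is readily available. The Kolmogorov 0-1 law route is delicate here: changing the initial exponentials $\{\Ey_i\}_{i<k}$ driving $\xi_{A_j}$ does not perturb $M_{A_j}(t)$ by an additive constant only; it induces a \emph{time-shift plus offset} $M_{A_j}^{\text{new}}(t) = M_{A_j}^{\text{old}}(t-c_j) - c_j$ with $c_j$ random and different for $j=1,2$, so the difference $M_{A_1}-M_{A_2}$ is not a shift of the original and membership of the sign-change event in the tail $\sigma$-algebra is not immediate. The LIL route would suffice, but the paper proves no LIL for $M_A$ in the regime $\Phi_2(\infty)=\infty$; the moderate-deviation estimates in Section \ref{techlem} concern $S_1(n)-\Phi_1(n)$ and are used for the index asymptotics, not to control $\limsup$/$\liminf$ of $M_{A_1}-M_{A_2}$, so invoking a LIL here is assuming a result that would still need proof. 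The paper's actual argument sidesteps both issues by a scale-invariance observation: write $W(t) := (\Phi_1(d_0(s_A))+\hat M_1(t)) - (\Phi_1(m_A)+\hat M_2(t))$ and $W^{(n)}(t) := (2n)^{-1/2}W(\K^{-1}(nt))$. Since $\K^{-1}(n\cdot)$ is an increasing bijection of $[0,\infty)$ and $(2n)^{-1/2}$ is a positive scalar, the \emph{number of sign changes of $W^{(n)}$ on $[0,\infty)$ does not depend on $n$} and equals that of $W$ itself. The FCLT plus Skorohod representation gives that $W^{(n)}$ converges a.s.\ uniformly on compacts to a continuous Brownian motion $W^*$, which a.s.\ changes sign at least $k$ times for every $k$; uniform convergence then forces $\mathbb{P}(W^{(n)} \text{ changes sign} \ge k \text{ times}) \to 1$, but this probability is constant in $n$, hence equal to $1$ for every $k$. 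This yields a.s.\ infinitely many sign changes of $W$, and via Lemma \ref{embmult}, of $d_0(\cdot)-d_A(\cdot)$, directly. You should replace the 0-1 law / LIL appeal with this scale-invariance argument; it is both shorter and uses only the FCLT you have already established.
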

The `in particular' part of the above theorem follows from the fact that the weak convergence to Brownian motion for the process $B^{(n)}$ implies that the values of $\xi_{A_1}(t)$ and $\xi_{A_2}(t)$ agree at infinitely many time points $t$. This, in turn, implies that the degrees of any two vertices are equal to each other in $\G_n$ for infinitely many $n$, which contradicts the emergence of a persistent hub.

The next theorem analyses the case when the attachment sequence is i.i.d. and relates the emergence of a persistent hub to the tail behavior of the distribution of an element in this sequence.
\begin{theorem}\label{iid}
Assume that $f$ is non-decreasing, $\Phi_2(\infty)< \infty$ and $\{m_i\}_{i \ge 1}$ form an i.i.d. sequence supported on $\mathbb{N}$.

(i) Suppose there exists $C_f>0$ such that $f(i) \le C_f (i+1)$ for all $i \ge 0$. If there exist positive constants $D, z_0$ and $\theta >1$ such that 
\begin{equation}\label{light}
\mathbb{P}\left(\Phi_1(m_1) > z\right) \le e^{-Dz^{\theta}}, \ z \ge z_0,
\end{equation}
then almost surely a persistent hub emerges.

(ii) Suppose $\mathbb{E}(m_1) < \infty$. If there exist positive constants $D', z_0'$ and $\theta' \in (0,1)$ such that 
\begin{equation}\label{heavy}
\mathbb{P}\left(\Phi_1(m_1) > z\right) \ge e^{-D'z^{\theta'}}, \ z \ge z_0',
\end{equation}
then, almost surely, there is no persistent hub.
\end{theorem}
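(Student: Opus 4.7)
This is a direct application of Theorem~\ref{genper}. Since every $m_n \ge 1$, one has $s_n \ge n$ and hence $\log s_n \ge \log n$. The light-tail bound \eqref{light} gives
\begin{equation*}
\mathbb{P}\bigl(\Phi_1(m_n) > (8C_f)^{-1} \log n\bigr) \le \exp\bigl(-D(8C_f)^{-\theta}(\log n)^\theta\bigr),
\end{equation*}
which is summable in $n$ because $\theta > 1$. The first Borel--Cantelli lemma then yields $\Phi_1(m_n)/\log s_n \le 1/(8C_f)$ eventually, almost surely, verifying \eqref{edgegrow}, and Theorem~\ref{genper} delivers persistence.

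\textbf{Part (ii).} My plan is to argue by contradiction. Suppose that with positive probability a persistent hub $v_{i^*}$ emerges, where $i^*$ is a random almost-surely-finite index. On the persistence event, for every $n$ large enough (in particular $n > i^*$) we must have $d_{i^*}(n) \ge d_n(n) = m_n$, since $v_n$ enters with $m_n$ out-edges and zero in-edges while persistence forces the hub to dominate; monotonicity of $\Phi_1$ gives $\Phi_1(d_{i^*}(n)) \ge \Phi_1(m_n)$ for all such $n$. The i.i.d.\ structure of $\{m_n\}_{n \ge 1}$ lets the heavy-tail bound \eqref{heavy} feed the second Borel--Cantelli lemma: for any $c>0$ with $D'c^{\theta'} < 1$,
\begin{equation*}
\sum_{n=1}^\infty \mathbb{P}\bigl(\Phi_1(m_n) > c(\log n)^{1/\theta'}\bigr) \ge \sum_{n=1}^\infty n^{-D'c^{\theta'}} = \infty,
\end{equation*}
so $\Phi_1(m_n) > c(\log n)^{1/\theta'}$ happens infinitely often, almost surely.

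To contradict this, I would establish, for each fixed $k \in \mathbb{N}_0$, a deterministic upper bound of the form $\Phi_1(d_k(n)) \le C\log n$ eventually, almost surely. Using the continuous time embedding of Section~\ref{sec:not}, one writes
\begin{equation*}
\Phi_1(d_k(n)) = \Phi_1(m_k) + \bigl(T(n) - T_k\bigr) + M_{m_k}\bigl(T(n) - T_k\bigr),
\end{equation*}
where $T(n)$ is the continuous clock at discrete step $n$ and $T_k$ is the birth time of $v_k$. The martingale $M_{m_k}$ converges almost surely because $\Phi_2(\infty) < \infty$, via the concentration estimates of Section~\ref{techlem}; and the clock satisfies $T(n) \le C'\log n$ eventually almost surely thanks to the strong law $s_n \sim n \mathbb{E}(m_1)$ (where the assumption $\mathbb{E}(m_1)<\infty$ enters) combined with the Malthusian growth rate of the underlying branching dynamics. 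A countable intersection over $k$, together with the almost-sure finiteness of $i^*$ on the persistence event, transfers the bound to $\Phi_1(d_{i^*}(n))$. Combining the two halves, on the persistence event we would have
\begin{equation*}
c(\log n)^{1/\theta'} < \Phi_1(m_n) \le \Phi_1(d_{i^*}(n)) \le 2C'\log n
\end{equation*}
for infinitely many $n$, impossible since $\theta' < 1$ forces $(\log n)^{1/\theta'}/\log n \to \infty$. Hence persistence has probability zero.

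\textbf{Expected main obstacle.} The delicate step is the deterministic $O(\log n)$ upper bound on $\Phi_1(d_k(n))$ for a fixed vertex in the general (non-tree) model: the continuous clock $T(n)$ depends on the entire sequence $\{m_i\}_{i \le n}$, and occasional heavy-tail spikes of $m_i$ could a priori accelerate it. The input $\mathbb{E}(m_1)<\infty$ (keeping $s_n$ linear in $n$) and the almost-sure convergence of $M_{m_k}$ (stabilizing each vertex's individual trajectory) are what tame this, but the argument has to be made uniform enough in $k$ to survive the countable intersection needed to cover the random index $i^*$.
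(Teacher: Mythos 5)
Your argument is essentially the paper's: apply the first Borel--Cantelli lemma to the tail bound \eqref{light} to verify the almost-sure condition \eqref{edgegrow} (the paper extracts a deterministic bound $\Phi_1(m_n)\le C(\log n)^{1/\theta}$ eventually, giving $\limsup=0$; you choose the threshold $(8C_f)^{-1}\log n$ directly, giving $\le 1/(8C_f)$ eventually). Either way, Theorem~\ref{genper} finishes.

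\textbf{Part (ii).} The logical skeleton you propose is the right one — pin down an $O(\log n)$ growth rate for $\Phi_1$ of a fixed vertex's degree, and use the second Borel--Cantelli lemma plus i.i.d.\ independence to make the incoming degree $\Phi_1(m_n)$ exceed that infinitely often — but the technical device you invoke to get the $O(\log n)$ upper bound does not exist in this setting. The identity
$$
\Phi_1(d_k(n)) = \Phi_1(m_k) + (T(n)-T_k) + M_{m_k}(T(n)-T_k)
$$
and the estimate $T(n)\le C'\log n$ via a Malthusian growth rate are tied to the continuous-time branching process embedding, which the paper constructs only in the tree case $m_i\equiv 1$ (Section~\ref{ctbp}; Lemma~\ref{lem:ctb-embedding-no-cp}). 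In the general model, each new vertex brings a random batch of $m_n$ edges attached sequentially, and the degree evolutions of different vertices do not share a single continuous clock; the paper's Lemma~\ref{embmult} only embeds a fixed \emph{pair} of vertex degrees, observed at their joint jump times, into a bivariate point process — there is no global $T(n)$ with Malthusian asymptotics. Consequently the key upper bound in your contradiction argument is unsupported. The paper's actual route is purely discrete: it introduces the event $\Omega_\epsilon=\{s^{-1}(k)\ge \epsilon(k+1)\ \forall k\}$ (which has probability tending to $1$ as $\epsilon\downarrow 0$ by the SLLN using $\mathbb{E}(m_1)<\infty$), uses the stochastic-domination coupling of Lemma~\ref{coupling}(ii) to bound $d_l(\cdot)$ above by a more tractable non-decreasing chain $\bar d^\epsilon_l(\cdot)$, and then cites Proposition~1.4 of~\cite{DM} to obtain $\lim_k \Phi_1(\bar d^\epsilon_l(k))/\log k = 1/(\epsilon f(0))$ a.s., a constant independent of $l$, yielding the needed $O(\log k)$ bound uniformly over vertices on $\Omega_\epsilon$. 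The converse Borel--Cantelli step and the SLLN bound $s_n<2\mathbb{E}(m_1)n$ then finish as in your sketch. If you want to repair your proposal, replace the continuous-clock step with this coupling-plus-\cite{DM} argument; alternatively you would have to construct and analyze a continuous embedding valid for general $\{m_i\}$, which the paper does not do.
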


Theorem \ref{iid} shows that, when $\{m_n\}_{n \ge 1}$ are i.i.d., the persistence property depends on the tail of the distribution of $\Phi_1(m_1)$ which is intimately connected to the scaling of the maximum of a collection of these random variables. This might seem to imply that, if any sequence of $m_n$'s grows faster than $\Phi_1^{-1}((\log n)^{\theta''})$ for some $\theta''> 1$, then there is no persistent hub. However, perhaps surprisingly, this is not true, as is demonstrated by the following theorem.

\begin{theorem}\label{slowvarm}
Consider the attachment function $f(k) = (k+1)^{\alpha}, \ k \in \mathbb{N}_0$, for some $\alpha \in (1/2,1)$. Let $m_n = \lfloor 1 + (\log n)^{\nu}\rfloor, \ n \in \mathbb{N}$, for some $\nu>0$. Then, for any positive $\nu$, a persistent hub emerges almost surely.
\end{theorem}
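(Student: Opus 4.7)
The plan is to work in the continuous-time branching-process embedding of Section \ref{ctbp}: vertex $v_i$ is born at a random time $t_i$ with initial degree $m_i$, and thereafter its degree evolves as an independent pure-birth process with instantaneous rate $f(k) = (k+1)^{\alpha}$ when at degree $k$. Writing this with the martingale of \eqref{martdef} gives the identity
\begin{equation*}
d_i(t) \;=\; \Phi_1^{-1}\!\Bigl(\Phi_1(m_i) + (t - t_i) + M_{m_i}(t - t_i)\Bigr), \qquad t \ge t_i,
\end{equation*}
so that the ordering of degrees at time $t$ coincides with the ordering of $Y_i(t) := \Phi_1(m_i) - t_i + M_{m_i}(t - t_i)$. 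Since $\alpha > 1/2$ gives $\Phi_2(\infty) < \infty$, each $M_{m_i}$ is an $L^2$-bounded martingale converging almost surely to a finite limit $M_{m_i}(\infty)$, and hence $Y_i(t) \to X_i := \Phi_1(m_i) - t_i + M_{m_i}(\infty)$ almost surely. Persistence of a hub is therefore equivalent to the sequence $\{X_i\}_{i \ge 0}$ attaining its supremum at an almost surely unique index.

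The main technical step is to establish almost-sure asymptotics of the birth times $t_n$. At time $t$, the total edge-emission rate is $\sum_i f(d_i(t))$, and one expects this to concentrate around $N(t)\,\bar f(t)$ with $\bar f(t) \asymp (\log N(t))^{\nu\alpha}$, reflecting that a typical recently-born vertex has degree of order $m_{N(t)} \sim (\log N(t))^{\nu}$. Because a new vertex is born only after $m_{N+1}$ further emissions, this yields the mean-field ODE
\begin{equation*}
\dot N(t) \;\asymp\; \frac{N(t)\,(\log N(t))^{\nu\alpha}}{(\log N(t))^{\nu}} \;=\; N(t)\,(\log N(t))^{-\nu(1-\alpha)},
\end{equation*}
which integrates to $t_n \sim c_{\alpha,\nu}\,(\log n)^{1+\nu(1-\alpha)}$. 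Turning this heuristic into an almost-sure statement uses the martingale concentration inequalities and functional CLT developed in Section \ref{techlem}, combined with a union bound over vertices, to control the deviation of $\sum_i f(d_i(t))$ from its mean. This is the principal obstacle of the argument.

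Granting the estimate $t_n \asymp (\log n)^{1+\nu(1-\alpha)}$, I would compare with $\Phi_1(m_n) = \sum_{j=0}^{m_n-1}(j+1)^{-\alpha} \sim (1-\alpha)^{-1}(\log n)^{\nu(1-\alpha)}$. Since the exponent $\nu(1-\alpha)$ is strictly less than $1 + \nu(1-\alpha)$ for every $\nu > 0$, the deterministic part satisfies $\Phi_1(m_n) - t_n \to -\infty$ at polylogarithmic rate \emph{uniformly in} $\nu > 0$; this is the crux of why any positive $\nu$ still admits a persistent hub. For the martingale piece, the quadratic variation at infinity is
\begin{equation*}
\langle M_{m_n} \rangle_\infty \;=\; \sum_{j \ge m_n} f(j)^{-2} \;\asymp\; m_n^{\,1-2\alpha} \;=\; O\bigl((\log n)^{-\nu(2\alpha - 1)}\bigr) \;\longrightarrow\; 0,
\end{equation*}
and the moderate-deviation bounds of Section \ref{techlem} provide Gaussian-type tails for $M_{m_n}(\infty)$. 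A Borel--Cantelli argument along a polylogarithmic threshold then yields $M_{m_n}(\infty) \to 0$ almost surely. Combined with the previous display, this shows $X_n \to -\infty$ almost surely, so $\sup_n X_n$ is attained at a finite random index.

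Finally, for uniqueness of the maximiser, conditional on the exponential clocks driving all vertices $v_j$ with $j \ne i$, the limit $M_{m_i}(\infty)$ has an atomless distribution (being a non-degenerate $L^2$-limit of a martingale driven by an independent family of exponentials). Hence $\mathbb{P}(X_i = X_j) = 0$ for every $i \ne j$, and a countable union produces a.s.\ distinctness of $\{X_i\}_{i \ge 0}$. Together with $X_n \to -\infty$, this yields a unique random maximiser $i^*$, so that $d_{i^*}(t) > d_j(t)$ for every $j \ne i^*$ and all sufficiently large $t$, which is precisely the emergence of a persistent hub.
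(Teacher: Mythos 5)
Your continuous-time strategy is genuinely different from the paper's. The paper's proof of Theorem~\ref{slowvarm} stays in discrete time and splits into two regimes: for $\nu < 1/(1-\alpha)$ it simply checks that $\Phi_1(m_n)/\log s_n \to 0$ and invokes Theorem~\ref{genper}; for $\nu \ge 1/(1-\alpha)$ it couples $d_{max}(\cdot)$ from below with a Bernoulli sum whose law-of-the-iterated-logarithm growth gives $d_{max}(s_n) \gtrsim (\log n)^{\nu+1}$, and then uses the pairwise embedding of Lemma~\ref{embmult} together with the martingale concentration bounds of Lemma~\ref{conc} to show any new arrival cannot overtake the running leader. Your plan instead tries to produce a full continuous-time embedding with explicit birth times $t_n$ and to analyse the almost-sure limit $X_n = \Phi_1(m_n) - t_n + M_{m_n}(\infty)$ directly. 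The embedding itself and the identity $\Phi_1(d_i(t)) = \Phi_1(m_i) + (t-t_i) + M_{m_i}(t-t_i)$ are valid by memorylessness, and the reduction of persistence to uniqueness of $\arg\sup X_i$ is the right shape (modulo a uniform bound on $\sup_{s\ge 0} M_{m_j}(s)$ over late $j$, which Lemma~\ref{conc} does supply).

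The genuine gap is the almost-sure asymptotics $t_n \asymp (\log n)^{1+\nu(1-\alpha)}$, which you flag as ``the principal obstacle'' and then assume. This is not a small technical hole. Establishing it requires controlling the total rate $\sum_i f(d_i(t))$: one would use concavity of $f(k)=(k+1)^{\alpha}$ and Jensen to bound $\sum_{j<n} f(d_j) \le n\,(\overline d + 1)^{\alpha}$ with $\overline d \asymp (\log n)^{\nu}$, and then apply concentration for sums of conditionally exponential inter-arrival times with random rates. None of that machinery is in Section~\ref{techlem}: Lemma~\ref{fcltper} and Lemma~\ref{mdpexp} concern a single vertex's degree process $\xi_A(\cdot)$, not the aggregate rate, and moreover Lemma~\ref{mdpexp} is proved under $\Phi_2(\infty)=\infty$, the opposite of the present regime. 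So the roadmap you give for filling the gap does not actually point to applicable results. The paper's Bernoulli-coupling trick neatly sidesteps this by never needing to know $t_n$ at all.

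A secondary inaccuracy: you claim $M_{m_n}(\infty)\to 0$ almost surely from $\langle M_{m_n}\rangle_\infty \asymp m_n^{1-2\alpha}\to 0$. Convergence of quadratic variation gives $L^2$ and in-probability convergence, not almost-sure convergence, and Lemma~\ref{conc}'s tail bounds are only effective for $x \ge x_0$ with $x_0$ a fixed constant independent of $A$, so they cannot give $\mathbb{P}(|M_{m_n}(\infty)|>\delta)$ summable for small fixed $\delta$ unless $\nu(2\alpha-1)>1$. This particular claim is not essential to your argument (the Gaussian tails already give $M_{m_n}(\infty)=O(\sqrt{\log n})$ eventually, which is dwarfed by $t_n$), but it should be stated correctly. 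With the $t_n$ estimate supplied rigorously and the $M_{m_n}(\infty)\to 0$ claim replaced by the polylogarithmic bound, your route would work; as written, it is incomplete.
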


\begin{remark}
Under the assumptions of Theorem \ref{slowvarm} (see \eqref{sl1}), there exist $C>0$ and $k_0 \in \mathbb{N}_0$ such that for $k \ge k_0$,
$$
\Phi_1^{-1}(k) \le C k^{\frac{1}{1-\alpha}}.
$$ 
Thus, for any $\theta'>0$, if $\nu > \theta'/(1-\alpha)$, then
$$
\liminf_{n \rightarrow \infty}\frac{m_n}{\Phi_1^{-1}((\log n)^{\theta'})} \ge \liminf_{n \rightarrow \infty}\frac{\lfloor 1 + (\log n)^{\nu}\rfloor}{C(\log n)^{\theta'/(1-\alpha)}} = \infty,
$$
although, by Theorem \ref{slowvarm}, a persistent hub emerges.

Note that the sequence $\{m_n\}_{n \ge 1}$ in Theorem \ref{slowvarm} is slowly varying. As indicated by the proof of Theorem \ref{iid} (ii), persistence is broken by \emph{rare events} when an incoming vertex has an atypically large degree. The slowly varying $\{m_n\}_{n \ge 1}$ ensures that such abrupt fluctuations in incoming vertex degrees do not occur and the maximum degree vertex picks up edges sufficiently fast to beat the degrees of the incoming vertices.
\end{remark}

\subsection{Main Results: tree case}\label{tree}
In the case when $m_i=1$ for all $i \ge 1$, using the continuous time embedding techniques discussed in Section \ref{ctbp}, persistence can be shown for a more general class of attachment functions without monotonicity assumptions on $f$. Moreover, asymptotic results on the maximal degree can be proved. Recall the function $\hat{\rho}(\cdot)$ defined in \eqref{eqn:rho-hat-def}. Also recall the Malthusian rate $\lambda_*$ defined as the unique solution to \eqref{eqn:malthus-def}.

\begin{theorem}\label{pertree}
Let $m_i=1$ for all $i \ge 1$. Assume $f$ satisfies Assumption \eqref{eqn:prop-under-lamb} and $\Phi_2(\infty) < \infty$. Then a persistent hub emerges almost surely. In particular, if $\Phi_2(\infty) < \infty$ and either (i) $\lim_{i \rightarrow \infty} \frac{f(i)}{i} = 0$, or (ii) $\bar{D} := \limsup_{i \rightarrow \infty} \frac{f(i)}{i} \in (0,\infty)$ and $\hat{\rho}(\bar{D}) >1$, then \eqref{eqn:prop-under-lamb} is satisfied and a persistent hub emerges almost surely.

Moreover, under the same assumptions on $f$, the maximal degree exhibits the following asymptotics:
\begin{equation}\label{maxasper}
d_{max}(n) = \Phi_1^{-1}\left(\frac{1}{\lambda_*}\log n + X^*_n\right),
\end{equation}
where $X^*_n$ converges almost surely to some random variable $X^*$ as $n \rightarrow \infty$.
\end{theorem}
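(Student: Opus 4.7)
The plan is to run everything through the continuous time branching process (CTBP) embedding of Section \ref{ctbp}. In that embedding each vertex $v_i$ carries its own independent birth point process $\xi^{(i)}$: $v_0$ uses attachment function $f$ (so $\xi^{(0)} \overset{d}{=} \xi$) while $v_i$ for $i\ge 1$ uses $f_1(\cdot)=f(1+\cdot)$, since it enters already holding the edge to its parent. Writing $\tau_n$ for the time of the $n$-th birth, $\mathcal{G}_n$ coincides in law with the state of the CTBP at $\tau_n$. Under \eqref{eqn:prop-under-lamb}, standard Malthusian theory (Jagers--Nerman) furnishes an a.s.\ positive $W$ with $e^{-\lambda_*\tau_n}(n+1)\to W$ a.s., hence $\tau_n=(\log n)/\lambda_* - (\log W)/\lambda_* + o(1)$ and in particular $\tau_i\sim(\log i)/\lambda_*$.

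The central identity, a direct rewriting of \eqref{martdef}, is
$$
\Phi_1(d_i(t)) \,=\, \Phi_1(c_i) + (t-\tau_i) + M^{(i)}(t-\tau_i), \qquad t\ge \tau_i,
$$
with $c_0=0$, $c_i=1$ for $i\ge 1$, and $M^{(i)}$ independent copies of $M_0$ (for $i=0$) or $M_1$ (for $i\ge 1$). Under $\Phi_2(\infty)<\infty$ each $M^{(i)}$ is an $L^2$-bounded martingale with $\mathbb{E}[(M^{(i)}(\infty))^2] \le \Phi_2(\infty)$ that converges a.s.\ to a finite limit $M^{(i)}(\infty)$. This motivates the \emph{handicap} $Y_i:=\tau_i-\Phi_1(c_i)-M^{(i)}(\infty)$: the vertex of maximum $\Phi_1$-degree at large time $t$ is, up to an $o(1)$ error, the one achieving $\min_i Y_i$. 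The main obstacle is controlling this minimum over the growing pool of candidates. I would use the concentration and moderate-deviation estimates for $M^{(i)}(\infty)$ developed in Section \ref{techlem} to show, via a Borel--Cantelli argument on the i.i.d.\ sequence $\{M^{(i)}(\infty)\}_{i\ge 1}$, that $\{i : M^{(i)}(\infty)\ge \tau_i - C\}$ is almost surely finite for every $C$; combined with $\tau_i\to\infty$, this makes $\{i : Y_i\le Y_0\}$ a.s.\ finite so $\min_i Y_i$ is attained, and distinctness of the finitely many competing $Y_i$'s follows by conditioning on all but one driving exponential and invoking the continuity of the resulting conditional distribution. The unique minimizer $i^*$ is the persistent hub.

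For the maximal degree asymptotics \eqref{maxasper}, once the persistent hub is identified we have $d_{\max}(\tau_n) = d_{i^*}(\tau_n)$ for all large $n$, so substituting the Malthusian expansion of $\tau_n$ into the identity above yields
$$
\Phi_1(d_{\max}(n)) = \frac{\log n}{\lambda_*} + \Bigl(\Phi_1(c_{i^*}) - \tau_{i^*} + M^{(i^*)}(\infty) - \frac{\log W}{\lambda_*}\Bigr) + o(1),
$$
which inverts to \eqref{maxasper} with $X^*_n \to X^* := \Phi_1(c_{i^*}) - \tau_{i^*} + M^{(i^*)}(\infty) - (\log W)/\lambda_*$ almost surely. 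Finally, for the two sufficient conditions on $f$ implying \eqref{eqn:prop-under-lamb}: in case (i), $f(i)/i\to 0$ together with $\Phi_1(\infty)=\infty$ forces each summand in \eqref{eqn:rho-hat-def} to decay summably for every $\lambda>0$, so $\underline{\lambda}=0$ and $\lim_{\lambda\downarrow 0}\hat{\rho}(\lambda)=\infty>1$; in case (ii), the assumed inequality $\hat{\rho}(\bar D)>1$ together with the monotonicity of $\hat{\rho}$ directly gives $\underline{\lambda}\le \bar D$ and $\lim_{\lambda\downarrow\underline{\lambda}}\hat{\rho}(\lambda)\ge \hat{\rho}(\bar D)>1$. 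The routine details of both verifications are packaged as Lemma \ref{checkcond}.
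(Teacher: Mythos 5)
Your proposal is correct and follows essentially the same route as the paper: CTBP embedding, Malthusian convergence of the birth times, the martingale concentration bounds of Lemma \ref{conc} combined with Borel--Cantelli to rule out late competitors, absolute continuity (Lemma \ref{ac}, Corollary \ref{distlim}) to break ties, and Lemma \ref{checkcond} for the sufficient conditions (i)--(ii). Your reorganization around the handicap variables $Y_i$ is just a clean repackaging of the comparison with the root vertex that the paper carries out via the events $A'_m$ and $E_n$.
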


\begin{remark}
In the context of \emph{linear preferential attachment} with $f(k) = k+\alpha$ for all $k$ and fixed $\alpha\geq 0$, one can check that $\lambda_* = (2+\alpha)$. Using the form of $\Phi_1^{-1}(\cdot)$ in this specific example, \eqref{maxasper} implies that there exists a finite random variable $W_\infty > 0$ a.s. such that  $d_{max}(n)/n^{1/(2+\alpha)} \stackrel{a.s.}{\longrightarrow} W_\infty$. This result was first proven using completely different techniques (and resulting in an explicit form of the limit random variable as functionals of Beta random variables) in \citet{mori2005maximum} with further characterizations in \citet{pekoz2013degree}. For maximal degree asymptotics with linear preferential attachment but where in addition each vertex has a random fitness see \citet{dereich2017}. 
\end{remark}

\begin{remark}
Under the assumptions on $f$ in Theorem \ref{pertree}, for any $K \in \mathbb{N}$, there is persistence of the top $K$ degree vertices in the sense of Corollary \ref{kpersist}.
\end{remark}

The following theorem gives sharp asymptotics for the maximal degree and the index of the maximal degree vertex in the case $\Phi_2(\infty) = \infty$. 
\begin{theorem}\label{indextree}
Assume $\Phi_2(\infty) = \infty$. Suppose Assumption \eqref{eqn:prop-under-lamb} holds and $f(k) \rightarrow \infty$ as $k \rightarrow \infty$. Also suppose that Assumptions C1, C2 and C3 hold.
Then the index of the maximal degree vertex exhibits the following asymptotics:
\begin{equation}\label{indexas}
\frac{\log \mathcal{I}^*_n}{\K\left(\frac{1}{\lambda_*} \log n\right)} \  \xrightarrow{P} \ \frac{\lambda_*^2}{2}, \ \text{ as } n \rightarrow \infty.
\end{equation}
Moreover, under the same assumptions, the maximal degree satisfies
\begin{equation}\label{maxas}
\frac{\Phi_1(d_{max}(n)) - \frac{1}{\lambda_*}\log n}{\K\left(\frac{1}{\lambda_*} \log n\right)} \xrightarrow{P} \ \frac{\lambda_*}{2}, \ \text{ as } n \rightarrow \infty.
\end{equation}
\end{theorem}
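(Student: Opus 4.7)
The plan is to work in continuous time using the branching-process embedding of Section \ref{ctbp}. Let $\tau_i$ denote the birth time of vertex $v_i$ in the embedded CTBP, with $\tau_0=0$; under Assumption \eqref{eqn:prop-under-lamb}, standard Malthusian theory gives $n(t):=|\{i:\tau_i\le t\}|$ with $n(t) e^{-\lambda_* t}\to W$ almost surely for a strictly positive $W$, so it suffices to analyze the CTBP at time $t:=\lambda_*^{-1}\log n$ up to an $O_P(1)$ shift. Each non-root vertex carries an independent copy of the offspring point process $\xi_1$, and by \eqref{martdef} the degree of $v_i$ at CTBP time $s\ge\tau_i$ satisfies
\[
\Phi_1(d_i(s)) \;=\; M^{(i)}(s-\tau_i) + (s-\tau_i) + O(1),
\]
where $M^{(0)}=M_0$ and $\{M^{(i)}\}_{i\ge 1}$ are i.i.d.\ copies of $M_1$. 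The whole problem thus reduces to analyzing
\[
Y(t):=\max_{i:\tau_i\le t}\bigl[M^{(i)}(t-\tau_i)-\tau_i\bigr],
\]
since $\Phi_1(d_{\max}(t))=t+Y(t)+O(1)$ and $\mathcal{I}^*_{n(t)}$ is the smallest index attaining $Y(t)$.

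The Gaussian heuristic comes from pairing the moderate-deviation estimates for $M_A$ from Section \ref{techlem}, which one expects in the form $\log \mathbb{P}(M_A(s)\ge a)\sim -a^2/(2\K(s))$ in the relevant regime, with the exponential density $\approx \lambda_* e^{\lambda_* T}\,dT$ of birth times. Using Assumption C2 to replace $\K(t-\tau_i)$ by $\K(t)$ throughout the relevant range $\tau_i=O(\K(t))=o(t)$, the expected count of vertices with $M^{(i)}(t-\tau_i)-\tau_i\ge x$ is approximately
\[
\int_0^{t}\lambda_* e^{\lambda_* T}\exp\!\left(-\frac{(T+x)^2}{2\K(t)}\right)dT \;\asymp\; \sqrt{\K(t)}\,\exp\!\left(-\lambda_* x+\tfrac{\lambda_*^2\K(t)}{2}\right),
\]
by completing the square in $T$ (with Gaussian peak at $T^*=\lambda_*\K(t)-x$). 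Setting this $\asymp 1$ pins the max at $x\sim\lambda_*\K(t)/2$, achieved by the window $T\sim\lambda_*\K(t)/2$, i.e.\ index $\sim\exp(\lambda_*^2\K(t)/2)$. Converting back via $t=\lambda_*^{-1}\log n$ recovers exactly \eqref{indexas} and \eqref{maxas}.

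To make this rigorous I would execute matching first- and second-moment arguments. For the upper bounds, given $\epsilon>0$, Markov gives
\[
\mathbb{P}\bigl(\exists i : \tau_i\ge T,\ M^{(i)}(t-\tau_i)-\tau_i\ge x\bigr) \;\le\; \mathbb{E}\bigl[\#\{i : \tau_i\ge T,\ M^{(i)}(t-\tau_i)-\tau_i\ge x\}\bigr],
\]
and the moderate-deviation upper bound, applied conditionally on the birth-time $\sigma$-algebra, evaluates the right-hand side as in the heuristic computation. Choosing $T=0$, $x=(1+\epsilon)\lambda_*\K(t)/2$ yields the max-degree upper bound, while $T=(1+\epsilon)\lambda_*\K(t)/2$, combined with the max-degree lower bound, yields the index upper bound. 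For the lower bounds I would apply a Paley--Zygmund / second moment computation to the count of good vertices with $\tau_i$ in a narrow window about $T^*=\lambda_*\K(t)/2$ achieving $M^{(i)}(t-\tau_i)-\tau_i\ge (1-\epsilon)\lambda_*\K(t)/2$; conditional independence of the $M^{(i)}$ given the birth-time $\sigma$-algebra keeps the variance comparable to the square of the mean. Assumption C3 then supplies the uniform doubling-type control on $\K$ needed to interchange $\K(t-\tau_i)$ and $\K(t)$ uniformly over this whole window.

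The hard part is twofold. First, one needs genuinely sharp moderate-deviation estimates for $M_A(s)$, with matching lower- and upper-tail bounds of the form $\log \mathbb{P}(M_A(s)\ge a)=-(1+o(1))a^2/(2\K(s))$ and explicit uniformity both over the shift $A$ (vertex $i$'s degree is driven by $M_1$, not $M_0$) and over $s$ in a broad range; producing such estimates under only the regularity Assumptions C1--C3 is presumably the technical backbone supplied by Section \ref{techlem}. Second, the dependence between the birth times $\{\tau_i\}$ and the individual offspring martingales $\{M^{(i)}\}$ must be disentangled via the CTBP Markov property: given the genealogy up to a stopping time, the future offspring processes of each live individual are independent copies of $\xi_1$. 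With these two ingredients the moment estimates chain together cleanly to deliver the two convergences in probability.
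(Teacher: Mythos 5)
Your proposal is correct and follows essentially the same strategy as the paper. The completing-the-square heuristic recovers exactly the paper's unimodal function $H(u) = -u + \sqrt{2\lambda_* u}$ (with $u = T/\K(t)$), whose maximizer $u^* = \lambda_*/2$ and maximum value $H(u^*) = \lambda_*/2$ yield the two constants in \eqref{indexas} and \eqref{maxas}; your observation that the degree of $v_i$ at CTBP time $s$ satisfies $\Phi_1(d_i(s)) = M^{(i)}(s-\tau_i) + (s-\tau_i) + O(1)$ is exactly how the paper reduces to the $N$-degree $D^{(i)}$; the MDP estimates you flag as the technical backbone are supplied by Lemmas \ref{mdpexp} and \ref{MDPN}; the Malthusian asymptotics and birth-count control you invoke are Lemma \ref{birthcontrol}; and your first-moment union bound over late births is the content of Lemma \ref{indmax}. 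The one place your route diverges slightly in flavor is the lower bound: where you propose a Paley--Zygmund second-moment computation over the count of good vertices, the paper avoids estimating a variance altogether by first conditioning on $n[a\K(t), b\K(t)] \ge e^{(\lambda_*-\delta)b\K(t)}$ (via the Malthusian birth-count bound) and then using the exact conditional i.i.d.\ structure of the offspring martingales across distinct individuals, so that $\mathbb{P}(\text{all fail}) \le (1-p)^N \to 0$ directly. This is cleaner since your Paley--Zygmund version would additionally need $\operatorname{Var}(n[a,b]) = o((\mathbb{E}\,n[a,b])^2)$, which is available but unnecessary. The paper also realizes the $\epsilon$-window around $u^*$ by partitioning the complementary set $[0,u^*-\epsilon)\cup[u^*+\epsilon,A_1)$ into small subintervals and applying its two-sided Lemma \ref{MDPsmall} interval by interval, rather than by a single sharp maximum computation; this is merely a bookkeeping difference. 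Your sketch correctly flags the genuine technical burdens — sharp two-sided moderate deviations for $M_A$ uniform in the shift $A$ (though in the tree case $A \in \{0,1\}$ only) and over the time range, and the use of the CTBP branching property to disentangle birth times from offspring martingales — and these are exactly what Section \ref{techlem} is designed to deliver.
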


\begin{remark}\label{2classestree}
By Remark \ref{2classes}, the hypotheses of Theorem \ref{indextree} are satisfied if $f$ is in Class I with $\alpha \in (0,1/2)$ ($\lim_{k \rightarrow \infty}f(k) = \infty$ can be verified by applying \citet[Theorem 1.5.6]{bingham}) or $f$ is in Class II with $\lim_{k \rightarrow \infty} g(k) = \infty$.

If $f$ is in Class I with $\alpha \in (0,1/2)$ and $f_b(\cdot) \equiv 1$ (i.e. $f$ is regularly varying of index $\alpha$), we obtain an expression for the index and maximal degree similar to \citet[Remark 1.16]{DM} with an additional multiplicative factor that involves the Malthusian rate. 
Recall that $\K(t) = \Phi_2 \circ \Phi_1^{-1}(t), t \ge 0$. By Karamata's Theorem (Proposition 1.5.8 of \citet{bingham}), $\Phi_1(\cdot)$ is regularly varying with index $1-\alpha$ and $\Phi_2(\cdot)$ is regularly varying with index $1-2\alpha$. Hence, using Theorems 1.5.7 and 1.5.12 of \citet{bingham} we conclude that $\K(\cdot)$ is regularly varying with index $\theta_{\alpha} := \frac{1-2\alpha}{1-\alpha}$. Write $\K(t) = t^{\theta_{\alpha}}l(t), \Phi_1^{-1}(t) = t^{1/(1-\alpha)}\tilde l(t),  t \ge 0$, where $l(\cdot), \tilde l(\cdot)$ are slowly varying functions. Then another application of Karamata's theorem shows that 
$$
l(t) \tilde l(t) \rightarrow \frac{(1-\alpha)^2}{1-2\alpha}, \ \text {as } t \rightarrow \infty.
$$ 
Thus, we obtain the following asymptotics if $f$ is in Class I with $\alpha \in (0,1/2)$ and $f_b(\cdot) \equiv 1$:
\begin{equation}\label{indslow}
\frac{\log\mathcal{I}^*_n}{(\log n)^{\frac{1-2\alpha}{1-\alpha}}l(\log n)} \  \xrightarrow{P} \ \frac{(\lambda^{*})^{\frac{1}{1-\alpha}}}{2}, \ \text{ as } n \rightarrow \infty.
\end{equation}
\begin{equation}\label{maxslow}
\frac{d_{max}(n) - \left(\frac{1}{\lambda_*}\log n\right)^{\frac{1}{1-\alpha}} \tilde l (\log n)}{\log n} \  \xrightarrow{P} \ \frac{1-\alpha}{2(1-2\alpha)}, \ \text{ as } n \rightarrow \infty.
\end{equation}
\end{remark}

Note that in Theorem \ref{indextree}, we impose the assumption $f(k) \rightarrow \infty$ as $k \rightarrow \infty$. In fact, as demonstrated by the next theorem, \eqref{indexas} and \eqref{maxas} are no longer true if this assumption is violated.

\begin{theorem}[Uniform Random Tree]\label{unifindex}
Suppose $f(k) = 1$ for all $k \in \mathbb{N}_0$. Then the following hold:
\begin{equation}\label{uni}
\frac{\log \mathcal{I}^*_n}{\log n} \  \xrightarrow{P} \ 1 - \frac{1}{2\ln 2} \ \text{ as } n \rightarrow \infty,
\end{equation}
\begin{equation}\label{unimax}
\frac{d_{max}(n)}{\log n} \  \xrightarrow{P} \  \frac{1}{\ln 2} \ \text{ as } n \rightarrow \infty.
\end{equation}
\end{theorem}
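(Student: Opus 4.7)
\textbf{Proof plan for Theorem \ref{unifindex}.}
Since $f \equiv 1$, the continuous time embedding of Section \ref{ctbp} reduces to a classical Yule process: each vertex, once born, produces children via an independent unit rate Poisson process. In particular $\hat\rho(\lambda) = 1/\lambda$, so the Malthusian rate is $\lambda_* = 1$. Let $\tau_i$ denote the birth time of $v_i$ (with $\tau_0 = 0$) and let $T_n$ be the time at which $v_n$ enters. Standard Yule process asymptotics yield $T_n - \log n \to -\log W$ and $\tau_i - \log i \to -\log W$ almost surely for the Malthusian martingale limit $W>0$. The key structural fact is that, conditional on $(\tau_i)_{i \ge 0}$, one has $D_i(T_n) = \mathbf{1}_{\{i \ge 1\}} + X_i$ where $X_i$ is the number of children of $v_i$ by time $T_n$, and the $X_i$ are independent with $X_i \sim \mathrm{Poisson}(T_n - \tau_i)$. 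Writing $\alpha_i := \log i/\log n$ and $\beta_i := 1 - \alpha_i$, the age is $T_n - \tau_i = \beta_i \log n + O(1)$.

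This reduces the whole analysis to a Poisson large deviation calculation with rate function $I(x;\mu) = x\log(x/\mu) - x + \mu$. For $k = c\log n$ and $\beta \in (0,1)$ fixed,
\[
\prob\bigl(\mathrm{Poisson}(\beta \log n) \ge c\log n\bigr) \approx n^{-[c\log(c/\beta) - c + \beta]}.
\]
The number of $i$ with $\alpha_i \in [\alpha, \alpha + d\alpha]$ is roughly $n^{\alpha}\log n \, d\alpha$, so
\[
\mathbb{E}\bigl[\#\{i : D_i(T_n) \ge c\log n\}\bigr] \asymp \log n \int_0^1 n^{(1+c) - 2\beta - c\log(c/\beta)}\, d\beta.
\]
The exponent is uniquely maximized over $\beta$ at $\beta^* := c/2$, with value $1 - c\log 2$. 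Hence this expectation vanishes for $c > 1/\ln 2$ and diverges polynomially for $c < 1/\ln 2$, pinpointing the rate $c^* = 1/\ln 2$ and the optimal relative age $\beta^* = 1/(2\ln 2)$, equivalently the optimal index exponent $\alpha^* = 1 - 1/(2\ln 2)$.

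The matching upper and lower bounds are assembled as follows. For the upper bound ($d_{\max}(n) \le (1/\ln 2 + \varepsilon)\log n$ and $\mathcal{I}^*_n \ge n^{\alpha^* - \varepsilon}$ w.h.p.), apply the Chernoff inequality for sums of independent Bernoullis in the discrete time representation $D_i(n) = \mathbf{1}_{\{i \ge 1\}} + \sum_{j=i+1}^n B_j^{(i)}$ with $B_j^{(i)} \sim \mathrm{Bernoulli}(1/j)$ independent over $j$, obtaining $\prob(D_i(n) \ge k) \le \exp(-I(k; \mu_{i,n}))$ with $\mu_{i,n} = \sum_{j=i+1}^n 1/j \approx \beta_i \log n$; a union bound turns the heuristic integral into a rigorous bound, and restricting to $i \le n^{\alpha^* - \varepsilon}$ (equivalently $\beta_i \ge \beta^* + \varepsilon$) makes the supremum of the exponent in $n$ strictly negative. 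For the lower bound ($d_{\max}(n) \ge (1/\ln 2 - \varepsilon)\log n$ and $\mathcal{I}^*_n \le n^{\alpha^* + \varepsilon}$ w.h.p.), consider the count $N_{\varepsilon, \eta} := \#\{i : D_i(T_n) \ge (1/\ln 2 - \varepsilon)\log n, \ \alpha_i \in (\alpha^* - \eta, \alpha^* + \eta)\}$; its first moment diverges, while the conditional independence of the $X_i$ given $(\tau_i)$ combined with tightness of the birth times yields $\mathrm{Var}(N_{\varepsilon, \eta}) \le (1 + o(1))\,\mathbb{E}[N_{\varepsilon, \eta}]$, so Chebyshev gives $N_{\varepsilon,\eta} \ge 1$ with high probability.

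The main obstacle will be producing a lower bound on the Poisson (equivalently, Bernoulli) tail $\prob(D_i(T_n) \ge c\log n)$ that matches the Chernoff upper bound uniformly over $i$ in the localization window and $c$ in a neighborhood of $1/\ln 2$, sharp enough to feed into the second moment calculation; this is a standard but delicate precise large deviation (or local limit) estimate. The transfer between the deterministic continuous time $T_n$ and the discrete step $n$ uses the almost sure convergence $T_n - \log n \to -\log W$ and uniform tightness of $\{\tau_i - \log i : i \ge n^{1 - \eta}\}$, both classical for the Yule process. Combining the upper bound (no vertex outside the window $\alpha_i \in (\alpha^* - \eta, \alpha^* + \eta)$ attains degree exceeding $(1/\ln 2 - \varepsilon)\log n$) with the lower bound (some vertex inside the window does) then localizes both the index $\mathcal{I}^*_n$ and the value of the maximum, yielding \eqref{uni} and \eqref{unimax}.
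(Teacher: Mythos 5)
Your heuristic calculation correctly recovers both constants by a genuinely different route from the paper: you set up a direct first/second-moment count of vertices of index $\approx n^{\alpha}$ and degree $\approx c\log n$ via the Poisson rate function $I(x;\mu)$, obtaining the exponent $1+c-2\beta-c\log(c/\beta)$ with $\beta=1-\alpha$, which is uniquely maximized at $\beta^*=c/2$ and vanishes at $c^*=1/\ln 2$. This is equivalent to (but more explicit than) the paper's parametrization by the unimodal function $\Psi(u)=(1-u)\bigl(1+w^{-1}(-u/(1-u))\bigr)$ with $w(\theta)=\theta-(1+\theta)\log(1+\theta)$; your formula avoids the implicit $w^{-1}$. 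The paper instead mimics the proof of Theorem \ref{indextree}, partitioning relative ages into intervals, feeding the Poisson LDP into the analogue of Lemma \ref{MDPsmall} (for each age window), and using the analogue of Lemma \ref{indmax} to handle young vertices. Both routes hinge on the same large-deviation/counting tradeoff, but your formulation is cleaner at the level of computing the constant, while the paper's machinery handles the probabilistic dependencies more carefully.

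That said, there is a genuine gap in your lower-bound argument. Your ``key structural fact'' --- that conditional on all birth times $(\tau_i)_{i\ge0}$ the $X_i$ are independent with $X_i\sim\mathrm{Poisson}(T_n-\tau_i)$ --- is false. Conditioning on all birth times, the remaining randomness is the parent assignment for each $v_j$ (uniform over $\{v_0,\dots,v_{j-1}\}$), so $X_i=\sum_{j=i+1}^n B_j^{(i)}$ with $B_j^{(i)}\sim\mathrm{Bernoulli}(1/j)$ independent over $j$, but \emph{not} independent over $i$: for each $j$, exactly one of $B_j^{(0)},\dots,B_j^{(j-1)}$ is $1$. So the $X_i$ are not conditionally independent and their conditional law is Poisson--binomial, not Poisson. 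Your variance estimate $\mathrm{Var}(N_{\varepsilon,\eta})\le(1+o(1))\Expect{N_{\varepsilon,\eta}}$ therefore does not follow as written. There are two standard fixes: (i) stay in the discrete Bernoulli representation and use negative association of the multinomial choices $(B_j^{(0)},\dots,B_j^{(j-1)})$ to get nonpositive covariances between the events $\{D_i(n)\ge c\log n\}$, which is exactly what the second moment method needs; or (ii) adopt the paper's strategy in Lemma \ref{MDPsmall}: condition only on the \emph{number} of births in a time window (controlled by Lemma \ref{birthcontrol}), then use that the post-birth offspring processes $\xi^{(i)}$ are unconditionally i.i.d.\ to lower-bound the probability that at least one of them exceeds the threshold. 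Either fix is compatible with your heuristic and with your Chernoff upper bound (which is correct as stated). You also correctly flag the need for a matching Poisson--binomial tail \emph{lower} bound uniform over the window; this is a real but routine technical point.
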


\begin{remark}\label{unifnottrue}
In the case of $f(\cdot) \equiv 1$, $\lambda_* = 1$ and $\K(t) = t$ for $t \ge 0$. Thus, for $n \ge 1$,
$$
\frac{\lambda_*^2}{2}\K\left(\frac{1}{\lambda_*} \log n\right) = \frac{1}{2}\log n \neq \left(1 - \frac{1}{2\ln 2}\right) \log n.
$$
Hence, \eqref{indexas} does not give the right asymptotics of the hub in this case. Similarly, \eqref{maxas} does not give the right asymptotics for the maximal degree. We note here that \eqref{unimax} (in fact, almost sure convergence) has been previously proved using other techniques in \citet{devroye1995strong} with further refinements in \citet{goh2002limit,addario2018high,eslava2016depth}.
\end{remark}

\section{Discussion}
\label{sec:disc}

\subsection{Related work}
The two papers closest to this work, and that inspired this paper, are \citet{galashin2013existence} and \citet{DM}, which we now describe. In \citet{galashin2013existence}, persistence of degree centrality was proven in the cases of the attachment function $f$ being either $f(k) = k$ for all $k$, or $f$ being convex with $f(k)\to\infty$, and the attachment sequence $m_i \equiv m$ for some constant $m\geq 1$. The heart of the proof is for the $f(k) =k $ case which then allows one to extend the result to the convex unbounded case via a coupling argument. The random walk technique developed in this paper was then used in \citet{jog2016analysis} (described below) to deal with a different centrality measure for the linear preferential attachment and uniform attachment cases. Establishing general conditions for persistence was raised as an open question.

\citet{DM} also analyzed the emergence of persistent hubs and obtained the asymptotics of the hub for a somewhat related model. In fact, the phrase {\bf persistent hub} was introduced in \citet{DM}. In their model, a monotonically increasing attachment function $f : \mathbb{N}_0 \rightarrow (0, \infty)$, satisfying $f(n) \le n+1$ for each $n \ge 0$, is used to construct a growing sequence of directed random graphs as follows. Start with one vertex (the root) and zero edges at time $n=0$. At each time $n \ge 1$, a new vertex $v$ arrives and forms directed connections independently with each existing vertex: $v$ connects to an existing vertex $u$ (connection directed from $v$ to $u$) with probability $f(d^{in}_u(n-1))/(n-1)$, where $d^{in}_u(n-1)$ is the in-degree of $u$ at time $n-1$.

 However, the two models have fundamental differences, most notably, the attachment probability in our model give in \eqref{attachprob} has \textit{a random denominator} unlike that in \citet{DM} where the denominator is the (deterministic) number of vertices of the graph at a given time.
Moreover, the attachment sequence in \cite{DM} is intrinsically prescribed by the model dynamics and their proofs depend crucially on the monotonicity of $f$. In the model considered here, we address much more general attachment sequences and many of our results do not require the monotonicity of $f$. In particular, we give general conditions on the attachment sequence and attachment function for the emergence of a persistent hub (Theorem \ref{genper}). We also investigate i.i.d. attachment sequences and show that persistence, or lack of it, is characterized by the tail behavior of their distribution (Theorem \ref{iid}).
Further, we show that the lack of persistence (Theorem \ref{perfail}) and the index of the hub (in the tree case, see Theorem \ref{indextree}) are characterized by the requirement $\Phi_2(\infty) = \infty$ and certain `continuity' properties of the function $\K(\cdot) := \Phi_2 \circ \Phi_1^{-1}(\cdot)$ (Assumptions C2 and C3 described above), which are satisfied by a wide variety of attachment functions (in particular, the two classes of functions given in Remark \ref{2classes}). In \cite{DM}, in addition to requiring monotonicity of $f$, the index asymptotics for the hub is obtained only when $f$ is regularly varying. Moreover, for results implying the emergence of persistent hubs, we do not require concavity of $f$, an assumption that is used in \cite{DM}. However, despite these differences, the two models have certain technical ingredients in common and they will be pointed out in the course of this article when such technical connections arise.

In the context of uniform attachment (Theorem \ref{unifindex}), the notion of the index of the maximal hub converging to $\infty$ has been a folk theorem in the probabilistic combinatorics community. The closest result to \eqref{uni} in Theorem \ref{unifindex} is in \citet{eslava2016depth} where it is shown that the height $h_n$ of the maximal degree vertex in $\mathcal{G}_n$ for uniform attachment satisfies a central limit theorem  
$(h_n - \hat{u}\log{n})/{\sqrt{\sigma^2 \log{n}}}\stackrel{d}{\longrightarrow} N(0,1),$
with $\sigma^2 =1 - 1/(4\ln 2)$, and $\hat{u} = 1-1/(2\ln 2) $ as in \eqref{uni}.

\subsection{Persistence of other centrality measures}
The only other centrality measure for which persistence had been explored before this work was the so called tree centroid measure in \citet{jog2016analysis,jog2018persistence}, with implications for network archeology in \citet{bubeck-devroye-lugosi,curien2015scaling,bubeck2017trees}. Suppose one has a sequence of growing random trees $\set{\cT_n:n\geq 0}$ started with a single root at time zero (variants of this with more general seed trees have also been explored e.g. in \citet{lugosi2019finding,devroye2018discovery}) and where at each stage a new vertex enters the system and attaches to a single pre-existing vertex using some scheme. For fixed (large) $n$, we only observe the topology of the (unrooted, unlabelled) tree at time $n$ and the aim is to estimate the original root. For any candidate vertex $u\in \cT_n$ let $\mathcal{D}_n(u)$ denote the neighbors of $u$ in $\cT_n$. For any vertex $u$ in $\cT_n$, deleting $u$ and its incident edges would lead to a forest of $|\cD_n(u)|$ many trees. Let $\Psi_n(u)$ denote the size of the largest of these trees. The intuition is that vertices more central within the tree would have \emph{small} $\Psi_n(u)$ values (for example for a newly arrived vertex in $\cT_n$, $\Psi_n(u) = |\cT_n|-1$). Thus the natural algorithm is to output the $K$ vertices with the smallest $\Psi_n(\cdot)$ values. Persistence for this centrality measure for uniform attachment and preferential attachment ($f(k) = k$) were explored in \citet{jog2016analysis}. This was extended to a weak form of persistence for the sublinear preferential attachment ($f(k) = k^\alpha$ with $0<\alpha<1$) in \citet{jog2018persistence}. After this work was completed, we established persistence in the strong sense of Definition \ref{def:pers} for the tree centroid measure for a broad class of attachment functions in \citet{banerjee2020root}, and employed it to obtain root finding algorithms with quantifiable confidence sets for the root.

\subsection{Related questions in probabilistic combinatorics}
The one other major area where notions similar to persistence have been explored has been in the context of the evolution of the connectivity structure in random graphs. Consider the \erdos random graph process: start with $n$ vertices with no edges. At each stage an edge is selected uniformly at random amongst all possible ${n \choose 2}$ edges and added to the system. The objects of interest now are the {\bf sizes} (number of vertices) of connected components. Erd\H{o}s suggested viewing this process as a ``race'' between components in growing their respective sizes. Call the largest connected component at any given time as the {\bf leader}.  One of the questions raised by Erd\H{o}s was to understand asymptotics for the leader time: the time beyond which the identity of the leader does not change, namely the leader persists for all periods of connection after this time. See \citet{luczak1990component,addario2017probabilistic} for more on this problem. 

\section{Continuous embedding in the tree case}\label{ctbp}
We now embark on the proofs of the main results. In the special case when $m_i =1$ for all $i \ge 1$, $\{\G_n\}_{n \ge 0}$ is a growing sequence of random trees in discrete time which can be embedded into a continuous time branching process. We now discuss this embedding. We mainly follow \citet{jagers-ctbp-book,jagers1984growth,nerman1981convergence,rudas2007random}.

Fix any attachment function $f$ satisfying Assumption \eqref{eqn:prop-under-lamb}. 
A \emph{continuous time branching process (CTBP)} driven by $f$, written as $\{\BP(t) : t \ge 0\}$, is defined to be a branching process started with one individual at time $t=0$, called the root, and such that every individual born into the system (including the root) has an offspring distribution that is an independent copy of the point process $\xi$ defined in \eqref{eqn:xi-f-def}. For $t \ge 0$, let $|\BP(t)|$ denote the size (number of individuals) at time $t$. We refer the interested reader to \citet{jagers-ctbp-book,athreya1972} for general theory regarding continuous time branching processes.

An important connection between CTBP and the discrete time random tree sequence $\{\G_n\}_{n \ge 0}$ with $m_i =1$ for all $i \ge 1$ is given by the following result which is easy to check using properties of the exponential distribution (and is the starting point of the Athreya-Karlin embedding \cite{athreya1968}). 

\begin{lemma}\label{lem:ctb-embedding-no-cp}
	Fix an attachment function $f$ satisfying Assumption \eqref{eqn:prop-under-lamb} and consider the sequence of random trees $\{\mathcal{G}_n: n \ge 0\}$ constructed using attachment function $f$. Consider the CTBP driven by $f$ and define for $n \geq 0$ the stopping times $T_n:=\inf\{t\geq 0: |\BP(t)| =n+1\}$. Then, viewed as a sequence of growing random labelled rooted trees, we have $\{\BP(T_n): n \ge 0\} \stackrel{d}{=} \{\mathcal{G}_n : n \ge 0\}.$
\end{lemma}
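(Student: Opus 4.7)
The plan is to induct on $n$, matching the one-step transition of $\{\BP(T_n)\}$ with that of $\{\mathcal{G}_n\}$ as labelled rooted trees, with CTBP vertices labelled by birth order. At $n=0$ both reduce to a single labelled root with no edges. For the inductive step, assume the equality of the joint laws up to step $n-1$ and show that, conditional on $\BP(T_{n-1}) = \tau$ for a labelled rooted tree $\tau$ on $n$ vertices, the distribution of $\BP(T_n)$ matches that of $\mathcal{G}_n$ given $\mathcal{G}_{n-1} = \tau$.

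The key computation is a memorylessness argument. Conditional on the filtration at $T_{n-1}$, let $k_v$ denote the current child count of each vertex $v \in \tau$. Because each individual's offspring process is an independent copy of $\xi$ and the $(k_v+1)$-th birth gap for individual $v$ is $\Ey_{k_v}^{(v)}/f(k_v)$, memorylessness implies that the residual time until $v$'s next birth is an independent exponential random variable with rate $f(k_v)$, with these residuals being mutually independent across $v \in \tau$ and independent of the tree structure built so far. Standard properties of competing exponentials then yield that the parent of the new individual born at $T_n$ is $v$ with probability $f(k_v)/\sum_{u \in \tau} f(k_u)$, which is precisely the attachment probability \eqref{attachprob} with $m_i \equiv 1$, once child count is identified with the tree degree used in the discrete model. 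Labelling the newborn with the next birth-order index and attaching it to the selected parent produces a labelled rooted tree whose conditional law agrees with that of $\mathcal{G}_n$ given $\mathcal{G}_{n-1} = \tau$.

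The most delicate point is really just careful bookkeeping: the mutual independence of the exponential gaps across individuals and across reproduction indices is immediate from the independent copies of $\xi$ assigned to each individual, and identifying child count with degree is direct in the tree setting, with a harmless convention for root versus non-root vertices (any uniform shift in the argument of $f$ across vertices is absorbed into the normalizing sum and does not affect the attachment ratio, provided it is applied consistently). With these points in place, the one-step transition probabilities match; combined with the strong Markov property of the CTBP at the stopping times $T_n$, induction then promotes this one-step matching to equality of the full growing sequences of labelled rooted trees in joint distribution.
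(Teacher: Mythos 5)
The overall strategy you outline — induction on $n$, memorylessness of exponentials, competing exponentials, strong Markov at the stopping times $T_n$ — is the right one and is exactly the Athreya--Karlin argument the paper alludes to. The gap is in the bookkeeping step you flag as ``the most delicate point'' and then dismiss. You claim that identifying child count with degree is direct, and that ``any uniform shift in the argument of $f$ across vertices is absorbed into the normalizing sum and does not affect the attachment ratio.'' This is false on two counts. First, the shift is not uniform: the root's degree equals its number of children, whereas every non-root vertex has degree equal to its number of children \emph{plus one} (the extra edge being the out-edge to its parent). Second, even a genuinely uniform shift would not cancel: for nonconstant $f$ one does not have $f(d_1)/\sum_j f(d_j) = f(d_1+1)/\sum_j f(d_j+1)$.

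To see the discrepancy concretely, take the CTBP as literally defined in Section \ref{ctbp}, i.e. every individual's offspring process is a copy of $\xi$, which gives birth at rate $f(k)$ when it has $k$ children. After the first birth, the root has one child and is reproducing at rate $f(1)$, while that child is reproducing at rate $f(0)$; in the discrete model after one step, $v_0$ and $v_1$ both have degree $1$ and each attracts $v_2$ with probability proportional to $f(1)$. These agree only when $f(0)=f(1)$. For the lemma to hold for general $f$, the embedding must give the root the offspring process $\xi = \xi_0$ but give each non-root individual the shifted offspring process $\xi_1$ built from $f_1(\cdot) = f(1+\cdot)$, so that an individual with $k$ children and degree $k+1$ reproduces at rate $f(k+1)$. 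This is precisely the shift the paper actually uses in Lemma \ref{embmult} through $\tilde{\xi}_2 := m_n + \xi_{m_n}$ (with $m_n = 1$ in the tree case). Once you build that shift in, the competing-exponentials computation does produce the attachment probability $f(\mbox{degree of }v)/\sum_u f(\mbox{degree of }u)$ for every vertex $v$, and the rest of your induction and strong Markov argument goes through. As written, however, the parenthetical justification for ignoring the root/non-root asymmetry is wrong, and without the shift the one-step transition probabilities do not match.
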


\section{Technical Estimates}\label{techlem}
This section collects technical estimates which will then be used in the next section to complete the proofs of the main results. Intuitively the main object of interest is the evolution of degrees of vertices in the network which in the continuous time embedding is represented by the point process $\xi(\cdot)$. The martingales $\{M_A(\cdot) : A \in \mathbb{N}_0\}$ defined in \eqref{martdef} turn out to be technically easier to analyze. The aim of this section is to derive tail bounds,  moderate deviation principles and limit theory for these processes. 

  Let $f$ denote an attachment function satisfying the Assumptions \eqref{unipos} and \eqref{sumfinfty}. Let $\{\Ey_i\}_{i \ge 0}$ denote i.i.d. rate one exponential random variables. Recall $\{S_k(l) : k \ge 1, l \ge 1\}$ and the point processes $\xi(\cdot)$ and $\xi_A(\cdot), A \in \mathbb{N}_0,$ defined in Section \ref{sec:not}. Let $\mathcal{L}$ denote the generator of the continuous time Markov process $\xi(\cdot)$. Define the filtration $\mathcal{F}(t) := \sigma\{ \xi(s) : s \le t\}, \ t \ge 0$.
  
The following lemma shows that the processes $\{M_A(\cdot) : A \in \mathbb{N}_0\}$ are indeed martingales and quadratic variations are computed. We only prove the result for $M(\cdot) = M_0(\cdot)$. The same proof applies to any $A \in \mathbb{N}_0$ by replacing the role of $f$ by $f_A$.
\begin{lemma}\label{mgle}
The process $M$ as defined in \eqref{martdef} is a continuous time martingale with respect to the filtration $\{\mathcal{F}(t)\}_{t \ge 0}$ with predictable quadratic variation given by 
$$
\langle M \rangle(t)  = \int_0^t\frac{1}{f\left(\xi(s-)\right)}ds.
$$
\end{lemma}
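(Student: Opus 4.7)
The plan is to view the counting process $\xi(\cdot)$ as a pure birth Markov process on $\mathbb{N}_0$: from the construction $S_1(n) = \sum_{i=0}^{n-1} \Ey_i/f(i)$ one reads off that the waiting time in state $n$ is exponential with rate $f(n)$, so the generator acts on bounded functions $\phi : \mathbb{N}_0 \to \mathbb{R}$ by $(\mathcal{L}\phi)(n) = f(n)[\phi(n+1) - \phi(n)]$. Non-explosion of $\xi$ (so that $\xi(t) < \infty$ a.s. for each $t$) follows from Assumption \eqref{sumfinfty} since $S_1(n) \to \infty$ a.s.

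The key observation is that $M(t) = g(\xi(t)) - t$, where $g(n) := \Phi_1(n) = \sum_{i=0}^{n-1} 1/f(i)$. For this $g$ a direct calculation yields
\[
(\mathcal{L}g)(n) \;=\; f(n)\bigl[g(n+1) - g(n)\bigr] \;=\; f(n)\cdot \frac{1}{f(n)} \;=\; 1.
\]
Dynkin's formula for pure jump Markov processes then identifies
\[
M(t) \;=\; g(\xi(t)) - g(\xi(0)) - \int_0^t (\mathcal{L}g)(\xi(s))\,ds
\]
as a local $\{\mathcal{F}(t)\}$-martingale. Equivalently, and perhaps more transparently, the counting process $\xi$ has $\{\mathcal{F}(t)\}$-compensator $\int_0^t f(\xi(s-))\,ds$, and $M(t)$ is the stochastic integral of the bounded predictable integrand $1/f(\xi(s-)) \leq 1/f_*$ against the compensated martingale $\xi(t) - \int_0^t f(\xi(s-))\,ds$.

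Next I would read off the quadratic variation from the jump structure. Since $M$ is of pure jump type with jump sizes $\Delta M(T_n) = 1/f(\xi(T_n-))$ at the jump times $T_n = S_1(n)$ of $\xi$, the optional quadratic variation is $[M](t) = \int_0^t f(\xi(s-))^{-2}\, d\xi(s)$; compensating by replacing $d\xi(s)$ with the intensity $f(\xi(s-))\,ds$ produces the predictable version
\[
\langle M \rangle(t) \;=\; \int_0^t \frac{1}{f(\xi(s-))^2} \cdot f(\xi(s-))\, ds \;=\; \int_0^t \frac{ds}{f(\xi(s-))},
\]
matching the claim.

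The only point requiring any genuine care is upgrading from a local to a true martingale, but this is immediate from Assumption \eqref{unipos}: the deterministic bound $\langle M \rangle(t) \leq t/f_*$ gives $\mathbb{E}[\langle M \rangle(t)] < \infty$ for every $t$, so standard localization arguments (e.g., Jensen plus BDG, or just $L^2$-boundedness on compact time intervals) show $M$ is in fact a square-integrable $\{\mathcal{F}(t)\}$-martingale. I do not anticipate any substantive obstacle: the argument is a textbook application of the Dynkin/compensation machinery for pure jump Markov processes, with Assumptions \eqref{unipos} and \eqref{sumfinfty} ensuring integrability and non-explosion respectively.
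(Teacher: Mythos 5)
Your proof is correct and follows essentially the same route as the paper: the generator/Dynkin identity $(\mathcal{L}\Phi_1)(n) = f(n)\cdot\tfrac{1}{f(n)} = 1$ gives the (local) martingale property, and the predictable quadratic variation is read off by compensating the pure-jump structure, which is what the paper does via the Liptser--Shiryaev integer-valued random measure machinery. Your version is slightly more careful than the paper's in that it explicitly addresses non-explosion (via \eqref{sumfinfty}) and the upgrade from local to true martingale (via the bound $\langle M\rangle(t)\le t/f_*$ from \eqref{unipos}), steps the paper leaves implicit.
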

\begin{proof}
Note that
$$
\mathcal{L} M(t) = \mathcal{L}\Phi_1(\xi(t)) - 1 = f(\xi(t))\left[\Phi_1(\xi(t)+1) - \Phi_1(\xi(t))\right] - 1 = 0
$$
proving that $M$ is a martingale. To obtain the quadratic variation, write $\Delta M(s) := M(s) - M(s-), s \ge 0$ and define the random measure
$$
\Theta(dt, dx) := \sum_{s>0}\mathbb{I}_{[\Delta M(s)>0]} \ \delta_{(s, \Delta M(s))}(dt,dx), \ t \in [0, \infty), x \in [0, \infty),
$$
where $\delta_{(a,b)}(\cdot,\cdot)$ denotes the Dirac measure at point $(a,b)$.
Observe that $\Theta$ is an integer-valued random measure in the sense of \cite[Chapter 3, Section 3]{LS}. Using Proposition 1 in \cite[Chapter 3, Section 4]{LS}, its compensator measure is computed as
$$
\nu(dt, dx) := \sum_{n \ge 0} f(n) \mathbb{I}\left[S_1(n) < t \le S_1(n+1)\right] \delta_{\frac{1}{f(n)}}(dx) dt
$$
where $\delta_a(\cdot)$ denotes the Dirac measure at point $a$. From Problem 2.4 in  \cite[Chapter 3, Section 5]{LS},
\begin{align*}
\langle M \rangle(t) &= \int_0^t\int_{(0,\infty)}x^2 \nu(dt, dx)\\
& = \sum_{n \ge 0} \frac{1}{f(n)} \int_0^t \mathbb{I}\left[S_1(n) < s \le S_1(n+1)\right] ds = \int_0^t\frac{1}{f\left(\xi(s-)\right)}ds.
\end{align*}
\end{proof}
\subsection{Martingale concentration estimates }The next lemma gives tail estimates for $M(t)$ for $t \ge 0$. Recall
$$
\mathcal{K}(t) = \Phi_2 \circ \Phi_1^{-1}(t), t \ge 0.
$$

\begin{lemma}\label{tbd}
For any $x \ge 0, t \ge 0$ and $s \in [0,t]$,
\begin{equation*}
\mathbb{P}\left(M(s) > x \K(t)\right) \le \exp\left\lbrace -\frac{x^2}{2} \frac{\K^2(t)}{f_*^{-2} + \K(t + x\K(t))} \right\rbrace.
\end{equation*}
In particular, if there are positive constants $t', D$ such that $\K(3t) \le D \K(t)$ for all $t \ge t'$, then for any $t \ge \max\left\lbrace t', \K^{-1}\left(\frac{1}{Df_*^2}\right)\right\rbrace$, any $x \in [0, 2t/\K(t)]$ and any $s \in [0,t]$,
\begin{equation*}
\mathbb{P}\left(M(s) > x \K(t)\right) \le \exp\left\lbrace -\frac{x^2}{4D} \K(t)\right\rbrace.
\end{equation*}
\end{lemma}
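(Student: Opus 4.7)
The plan is to evaluate $M$ at the jump times $\{S_1(n)\}_{n \geq 1}$ of $\xi$, reducing the problem to a sum of independent centered random variables, and then combine a sub-Gaussian moment generating function bound with Doob's maximal inequality. Since $M(u) = \Phi_1(\xi(u)) - u$ decreases linearly with slope $-1$ between jumps of $\xi$ and jumps upward at each $S_1(n)$, the running maximum satisfies $\sup_{u \leq s}M(u) = \max_{0 \leq n \leq \xi(s)} M(S_1(n))$, and $M(S_1(n)) = \Phi_1(n) - S_1(n) = \sum_{i=0}^{n-1} X_i$ with $X_i := (1 - \Ey_i)/f(i)$ independent, centered, and bounded above by $1/f_*$.

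Using $\mathbb{E}[e^{-a \Ey_i}] = (1 + a)^{-1}$ for $a > -1$, a direct computation gives, for $\theta \geq 0$ and $u := \theta/f(i)$,
$$\log \mathbb{E}[e^{\theta X_i}] = u - \log(1+u) \leq \frac{u^2}{2} = \frac{\theta^2}{2 f^2(i)},$$
where the inequality follows from $h(u) := u - \log(1+u) - u^2/2$ satisfying $h(0) = 0$ and $h'(u) = -u^2/(1+u) \leq 0$ for $u \geq 0$. Summing over $i$ yields the sub-Gaussian bound $\mathbb{E}[e^{\theta M(S_1(n))}] \leq \exp(\theta^2 \Phi_2(n)/2)$ for every $\theta \geq 0$.

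Now fix $y := x \mathcal{K}(t) > 0$ (the case $x = 0$ is trivial) and set $N^* := \lfloor \Phi_1^{-1}(t + y)\rfloor + 1$. On $\{M(s) \geq y\}$ with $s \leq t$, let $n_0 \geq 1$ be the first jump index with $M(S_1(n_0)) \geq y$; then $M(S_1(n_0 - 1)) < y$ and $S_1(n_0 - 1) \leq s$ force $\Phi_1(n_0 - 1) < y + S_1(n_0 - 1) \leq y + t$, and monotonicity of $\Phi_1$ gives $n_0 \leq N^*$. Applying Doob's maximal inequality to the submartingale $\{e^{\theta M(S_1(n))}\}_{n \geq 0}$ and optimizing at $\theta = y/\Phi_2(N^*)$ yields
$$\mathbb{P}(M(s) \geq y) \leq \mathbb{P}\left(\max_{n \leq N^*}M(S_1(n)) \geq y\right) \leq \exp\left(-\frac{y^2}{2\, \Phi_2(N^*)}\right).$$
Because $f \geq f_*$, the contribution of the last level is at most $1/f_*^2$, so $\Phi_2(N^*) \leq \Phi_2(\Phi_1^{-1}(t+y)) + 1/f_*^2 = \mathcal{K}(t + y) + 1/f_*^2$; substituting $y = x \mathcal{K}(t)$ proves the first claim. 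For the second claim, under Assumption C3 and the stated constraints, $t + x \mathcal{K}(t) \leq 3t$ gives $\mathcal{K}(t + x \mathcal{K}(t)) \leq D\, \mathcal{K}(t)$, while $t \geq \mathcal{K}^{-1}(1/(D f_*^2))$ gives $1/f_*^2 \leq D\, \mathcal{K}(t)$, so the denominator is at most $2D\, \mathcal{K}(t)$.

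The main subtlety will be the discretization step, i.e.\ the reduction of the continuous-time event $\{M(s) \geq y\}$ to the discrete maximum over $n \leq N^*$, which must be done tightly enough to produce the floor bound $N^* = \lfloor \Phi_1^{-1}(t+y)\rfloor + 1$ and thus the precise argument $t + x \mathcal{K}(t)$ inside $\mathcal{K}(\cdot)$. Everything else is a clean MGF computation followed by a one-parameter optimization.
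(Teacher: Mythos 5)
Your proof is correct, and it reaches the same final inequalities via a slightly different route than the paper. The paper proceeds by time-inversion: it rewrites $\{M(s)>x\K(t)\}$ as $\{\xi(s)>\Phi_1^{-1}(s+x\K(t))\}$, which (since $\xi(s)$ is integer-valued) is $\{S_1(\beta(t,s,x))\le s\}$ with $\beta(t,s,x):=1+\lfloor\Phi_1^{-1}(s+x\K(t))\rfloor$, and then applies the exponential Markov inequality directly to the single random variable $S_1(\beta(t,s,x))$, using the same inequality $\log(1+z)\ge z-z^2/2$ that you package as $u-\log(1+u)\le u^2/2$ to get the sub-Gaussian MGF estimate with variance proxy $\Phi_2(\beta(t,s,x))$. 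You instead exploit that $M$ attains its running maximum at jump times, show that on $\{M(s)\ge y\}$ the first jump index reaching level $y$ is at most $N^*=\lfloor\Phi_1^{-1}(t+y)\rfloor+1$, and apply Doob's maximal inequality to the submartingale $e^{\theta M(S_1(n))}$. The two derivations are essentially equivalent in content: your $N^*$ coincides with the paper's $\beta(t,s,x)$ after using $s\le t$, the MGF computations are identical, and both give variance proxy $\Phi_2(N^*)\le\K(t+x\K(t))+f_*^{-2}$. The paper's approach is marginally leaner (one Chernoff bound, no maximal inequality), while yours is arguably more conceptual: it reads off the bound as a standard Doob/Chernoff estimate for a discrete-time random walk, without having to invert the time-change by hand. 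Your handling of the second assertion (using $x\K(t)\le 2t$, Assumption C3, and $t\ge\K^{-1}(1/(Df_*^2))$) matches the paper.
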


\begin{proof}
For any $x \ge 0, t \ge 0$ and $s \in [0,t]$,
\begin{multline}\label{mt1}
\mathbb{P}\left(M(s) > x \K(t)\right) = \mathbb{P}\left(N(s) > s + x \K(t)\right) = \mathbb{P}\left(\xi(s) > \Phi_1^{-1}\left(s + x \K(t)\right)\right)\\
\le \mathbb{P}\left(S_1\left(1 + \lfloor \Phi_1^{-1}\left(s + x \K(t)\right) \rfloor \right) \le s \right)
\le e^{\theta s}\mathbb{E}\left(\exp\left\lbrace -\theta S_1(1 + \lfloor \Phi_1^{-1}\left(s + x \K(t)\right\rfloor)\right\rbrace\right).
\end{multline}
Write $\beta(t,s,x) := 1 + \lfloor \Phi_1^{-1}\left(s + x \K(t)\right) \rfloor$. Using the explicit form of the moment generating function for exponential random variables, and the inequality $\log(1+z) \ge z - z^2/2$ for all $z \ge 0$, we obtain for any $\theta > 0$,
\begin{align*}
\log \mathbb{E}\left(\exp\left\lbrace -\theta S_1(\beta(t,s,x))\right\rbrace\right) &= - \sum_{i=0}^{\beta(t,s,x) - 1}\log\left(1 + \frac{\theta}{f(i)}\right)
\le  \sum_{i=0}^{\beta(t,s,x) - 1}\left( - \frac{\theta}{f(i)} + \frac{\theta^2}{2f^2(i)}\right)\\
&= - \theta \Phi_1(\beta(t,s,x)) + \frac{\theta^2}{2} \Phi_2(\beta(t,s,x))\\
& \le -\theta s - x \theta \K(t) + \frac{\theta^2}{2} \Phi_2(\beta(t,s,x))
\end{align*}
where, to obtain the last inequality, we used $\Phi_1(\beta(t,s,x)) \ge \Phi_1(\Phi_1^{-1}(s + x \K(t)) = s + x \K(t)$. Using this estimate in \eqref{mt1},
\begin{equation*}
\mathbb{P}\left(M(s) > x \K(t)\right) \le \exp\left\lbrace - x \theta \K(t) + \frac{\theta^2}{2} \Phi_2(\beta(t,s,x)) \right\rbrace.
\end{equation*}
Optimizing over $\theta>0$, we obtain
$$
\mathbb{P}\left(M(s) > x \K(t)\right) \le \exp\left\lbrace -\frac{x^2}{2} \frac{\K^2(t)}{\Phi_2(\beta(t,s,x))} \right\rbrace.
$$
The first assertion of the lemma now follows upon noting that
$$
\Phi_2(\beta(t,s,x)) \le \Phi_2(1 + \Phi_1^{-1}\left(s + x \K(t)\right)) \le \Phi_2(\Phi_1^{-1}\left(s + x \K(t)\right)) + f_*^{-2} \le \K(t + x\K(t)) + f_*^{-2}.
$$
The second assertion is an immediate consequence of the first.
\end{proof}

For $A \in \mathbb{N}_0$, recall the process $M_A(\cdot)$ defined in \eqref{martdef}. By Lemma \ref{mgle}, $M_A(\cdot)$ is a martingale. Let $f_*(A) := \inf_{i \ge A}f(i)$. Denote $S_k(\cdot), \Phi_k(\cdot)$, $k \ge 1$, (as defined in Section \ref{sec:not}) corresponding to $f_A(\cdot)$ (in place of $f$) by $S^A_k(\cdot), \Phi^A_k(\cdot)$. The following lemma establishes tail estimates for the random variables $\inf_{s<\infty}M_A(s)$ and $\sup_{s<\infty}M_A(s)$ ($A \in \mathbb{N}_0$) in the case $\Phi_2(\infty) < \infty$.  
\begin{lemma}\label{conc}
If $\Phi_2(\infty) < \infty$, then there exist positive constants $x_0, x_0', x_0'', C_1, C_2$ (independent of $A$) such that for any $A \in \mathbb{N}_0$,
\begin{equation}\label{concfin}
\mathbb{P}\left(\inf_{s<\infty} M_A(s) \le -x\right) \le
\left\{
	\begin{array}{ll}
		 C_1\exp\left\lbrace-C_2 x^2\right\rbrace  & \mbox{if }  x_0 \le x  < x_0'\sqrt{f_*(A)},\\
		C_1\exp\left\lbrace -C_2\sqrt{f_*(A)} x\right\rbrace & \mbox{if } x \ge x_0'\sqrt{f_*(A)}.
	\end{array}
\right.
\end{equation}
and
\begin{equation}\label{concfin2}
\mathbb{P}\left(\sup_{s<\infty} M_A(s) \ge x\right) \le C_1\exp\left\lbrace-C_2 x^2\right\rbrace  \ \text{ for all } x \ge x_0''.
\end{equation}
\end{lemma}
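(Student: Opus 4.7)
The plan is to reduce both tail bounds to Chernoff-type estimates for a discrete skeleton of $M_A$ sampled at its jump times, and then apply Doob's maximal inequality to an appropriately optimised exponential supermartingale. Between consecutive jumps $M_A$ decreases linearly at unit speed, and at the $(n{+}1)$-th jump time $S^A_1(n{+}1)$ it jumps up by $1/f_A(n)$; its supremum (respectively infimum) over $[0,\infty)$ is therefore realised at a right (respectively left) limit at a jump time. Writing $T_n := M_A(S^A_1(n)) = \sum_{i=0}^{n-1}(1-\Ey_i)/f_A(i)$ one verifies
$$\sup_{s<\infty} M_A(s) = \sup_{n \ge 0} T_n, \qquad \inf_{s<\infty} M_A(s) = \inf_{n \ge 0}\bigl(T_n - \Ey_n/f_A(n)\bigr).$$

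For the upper tail \eqref{concfin2} I would exploit that the summands $X_i := (1-\Ey_i)/f_A(i)$ are independent, centred, and \emph{bounded above} by $1/f_A(i)$. The elementary inequality $y - \log(1+y) \le y^2/2$ for $y \ge 0$ gives $\log \mathbb{E}[e^{\theta X_i}] \le \theta^2/(2 f_A^2(i))$ for \emph{every} $\theta \ge 0$ (no upper restriction), so $\exp\bigl(\theta T_n - \tfrac{\theta^2}{2}\Phi^A_2(n)\bigr)$ is a nonnegative supermartingale starting at $1$. Doob's maximal inequality together with $\Phi^A_2(\infty) \le \Phi_2(\infty)$ then yields, for each $\theta \ge 0$,
$$\mathbb{P}\Bigl(\sup_n T_n \ge x\Bigr) \le \exp\bigl(\tfrac{\theta^2}{2}\Phi_2(\infty) - \theta x\bigr),$$
and optimising at $\theta = x/\Phi_2(\infty)$ gives the Gaussian bound \eqref{concfin2} with $C_2 = 1/(2\Phi_2(\infty))$ valid for all $x \ge 0$.

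For the lower tail \eqref{concfin} I would split
$$\mathbb{P}\bigl(\inf_{s<\infty} M_A(s) \le -x\bigr) \le \mathbb{P}\bigl(\sup_n(-T_n) \ge x/2\bigr) + \mathbb{P}\bigl(\sup_n \Ey_n/f_A(n) \ge x/2\bigr).$$
For the first piece, set $U_i := (\Ey_i - 1)/f_A(i)$; now the MGF $e^{-\theta/f_A(i)}/(1-\theta/f_A(i))$ is finite \emph{only} when $\theta < f_A(i)$, while the expansion $-y - \log(1-y) = \sum_{k \ge 2} y^k/k$ yields $\log \mathbb{E}[e^{\theta U_i}] \le \theta^2/f_A^2(i)$ whenever $\theta \le f_*(A)/2$. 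Repeating the supermartingale--Doob argument under this constraint produces a Gaussian tail $\exp(-c x^2)$ when the unconstrained optimiser lies in the admissible range (i.e.\ when $x \lesssim f_*(A)$), and a linear tail $\exp(-c' f_*(A) x)$ beyond, obtained by saturating $\theta$ at $f_*(A)/2$. For the second piece, a union bound gives $\sum_n e^{-f_A(n) x/2}$; splitting $e^{-f_A(n) x/2} \le e^{-f_*(A) x/4} e^{-f_A(n) x/4}$ and using $e^{-a} \le 16/a^2$ for $a \ge 4$ (valid once $x \ge 4/f_*(A)$) bounds this by $16\Phi_2(\infty)\, x^{-2}\exp(-f_*(A) x/4)$. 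Combining the two pieces and choosing the crossover $x_0'\sqrt{f_*(A)}$ so that the Gaussian and (weaker) $\exp(-C_2\sqrt{f_*(A)} x)$ regimes match absorbs the polynomial prefactors into a single exponential and delivers \eqref{concfin}.

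The main obstacle is the asymmetry between the two tails, rooted in $\Ey_i$ being bounded below but not above: the MGF constraint $\theta < f_A(i)$ for $U_i$ is precisely what forces the sub-exponential regime in \eqref{concfin} and determines the location of the Gaussian--sub-exponential crossover. Keeping the constants $C_1, C_2, x_0, x_0', x_0''$ genuinely independent of $A$ additionally requires the uniform tail control of $\sup_n \Ey_n/f_A(n)$, and this is exactly where the hypothesis $\Phi_2(\infty) < \infty$ is essential, since it is the only quantitative handle on how fast $f_A(n) \to \infty$ uniformly in $A$.
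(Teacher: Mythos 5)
Your argument is correct and takes a genuinely different route from the paper's. You reduce both tails to the discrete skeleton $T_n=M_A(S^A_1(n))=\sum_{i<n}(1-\Ey_i)/f_A(i)$ and run a classical Chernoff--supermartingale--Doob argument, splitting the lower tail as $\mathbb{P}(\inf_n T_n\le -x/2)+\mathbb{P}(\sup_n \Ey_n/f_A(n)\ge x/2)$ and handling the second piece by a direct union bound. The paper instead proves the lower tail by quoting a Liptser--Shiryaev exponential inequality for martingales with bounded jumps, controlling $\langle M_A\rangle(\infty)$ via a separate Chernoff bound on $S^A_2(\infty)$, and optimizing over the quadratic-variation level $\alpha$; for the upper tail the paper uses a strong-Markov doubling trick ($\mathbb{P}(\sup_{s\le t} M_A(s)\ge 2x)\le 2\,\mathbb{P}(M_A(t)>x)$) together with the earlier Lemma \ref{tbd}. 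Your approach is more elementary and self-contained, avoiding both the Liptser--Shiryaev theorem and Lemma \ref{tbd}, and it produces a cleaner upper-tail bound valid for all $x\ge 0$ rather than only $x\ge x_0''$; what it trades away is that the paper's quadratic-variation route is the machinery actually reused elsewhere (e.g.\ in the FCLT of Lemma \ref{fcltper}), so the paper's exposition keeps all martingale estimates inside one framework. One arithmetic slip: for $e^{-a}\le 16/a^2$ with $a=f_A(n)x/4\ge 4$ you need $x\ge 16/f_*(A)$, not $4/f_*(A)$ (harmless, since $f_*(A)\ge f_*$ makes the threshold $A$-independent either way). You also leave the final constant-matching at the crossover $x_0'\sqrt{f_*(A)}$ as a sketch, but the bookkeeping does close: choosing $x_0'\le 2\sqrt{f_*}\,\Phi_2(\infty)$ keeps the first-piece optimizer admissible throughout the Gaussian regime, and since $f_*(A)\ge f_*$ and $f_*(A)>x^2/(x_0')^2$ in the respective ranges, the $e^{-f_*(A)x/4}$ factor from both pieces dominates $e^{-C_2 x^2}$ (Gaussian regime) and $e^{-C_2\sqrt{f_*(A)}\,x}$ (sub-exponential regime) once $x_0$ is taken large enough, with constants depending only on $f_*$ and $\Phi_2(\infty)$.
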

\begin{proof}
Fix $A \in \mathbb{N}_0$. In this proof, $C, C'$ will denote generic  positive constants not depending on $x,t, \alpha, A$ whose values might change from line to line. As the martingale $M_A$ has jumps bounded by $f_*(A)^{-1}$, by Theorem 5 in \cite[Chapter 4, Section 13]{LS}, for any $t \ge 0$, $x > 0$ and $L>0$,
\begin{equation}\label{concin}
\mathbb{P}\left(\inf_{s \le t} M_A(s) \le -x \right) \le \exp\left\lbrace - \sup_{z >0}[z x - Lu^A(z)]\right\rbrace + \mathbb{P}\left(\langle M_A \rangle(t) \ge L\right),
\end{equation}
where $u^A(z) := f_*(A)^2\left(e^{z/f_*(A)} - 1 - \frac{z}{f_*(A)}\right), \ z \ge 0$. Observe that 
\begin{equation}\label{conc1}
\langle M_A \rangle(t) = \sum_{n \ge 0}\frac{1}{f_A(n)} \int_0^t \mathbb{I}\left[S^A_1(n) < s \le S^A_1(n+1)\right] ds \le \sum_{n = 0}^{\xi_A(t)}\frac{\Ey_n}{f_A^2(n)} = S^A_2(\xi_A(t) + 1).
\end{equation}
Take any $\theta \in (0, 3f_*(A)^2/4)$. Using the explicit form of the moment generating function for exponential random variables, for any $l \in \mathbb{N}$,
\begin{multline}\label{conc2}
\log \mathbb{E}\left[\exp\left\lbrace \theta (S^A_2(l) - \Phi^A_2(l))\right\rbrace\right] = -\theta \Phi^A_2(l) - \sum_{i=0}^{l-1}\log\left(1 - \frac{\theta}{f_A^2(i)}\right)\\
 = \sum_{k=2}^{\infty} \frac{\theta^k}{k} \Phi^A_{2k}(l) \le \Phi^A_4(l)\sum_{k=2}^{\infty} \frac{\theta^k}{kf_*(A)^{2k-4}} \le C\theta^2\Phi^A_4(l).
\end{multline}
Thus, for any $\alpha>0$, any $\theta \in (0, 3f_*(A)^2/4)$ and any $l \in \mathbb{N}$,
$$
\mathbb{P}\left(S^A_2(l) - \Phi^A_2(l) \ge \alpha\right) \le e^{-\theta \alpha} \mathbb{E}\left[\exp\left\lbrace \theta (S^A_2(l) - \Phi^A_2(l))\right\rbrace\right] \le \exp\left\lbrace -\theta \alpha + C \theta^2\Phi^A_4(l)\right\rbrace.
$$
Note that as $\Phi_2(\infty) < \infty$, therefore $\Phi^A_4(\infty) \le f_*(A)^{-2}\Phi_2(\infty) < \infty$. Hence we obtain, for any $t \ge 0$, $l \in \mathbb{N}$,
\begin{align*}
\mathbb{P}\left(S^A_2(l) \ge \Phi^A_2(\infty) + \alpha\right) &\le \mathbb{P}\left(S^A_2(l) \ge \Phi^A_2(l) + \alpha\right)\\
& \le \exp\left\lbrace -\theta \alpha + C \theta^2\Phi^A_4(l)\right\rbrace \le \exp\left\lbrace C \theta^2\Phi^A_4(\infty)\right\rbrace \exp\left\lbrace -\theta \alpha\right\rbrace.
\end{align*}
Taking a limit as $l \rightarrow \infty$ in the above, we obtain for any $\theta \in (0, 3f_*(A)^2/4)$,
\begin{multline*}
\mathbb{P}\left(S^A_2(\infty) \ge \Phi^A_2(\infty) + \alpha\right) \le \exp\left\lbrace C \theta^2\Phi^A_4(\infty)\right\rbrace \exp\left\lbrace -\theta \alpha\right\rbrace\\
\le \exp\left\lbrace C \theta^2f_*(A)^{-2}\Phi_2(\infty)\right\rbrace \exp\left\lbrace -\theta \alpha\right\rbrace.
\end{multline*}
Taking $\theta = \frac{f_*}{2} f_*(A) \le \frac{f_*(A)^2}{2}$ in the above, we obtain for any $\alpha>0$,
$$
\mathbb{P}\left(S^A_2(\infty) \ge \Phi^A_2(\infty) + \alpha\right) \le C \exp\left\lbrace-C'f_*(A) \alpha \right\rbrace.
$$
From this observation and \eqref{conc1}, for any $t \ge 0$,
\begin{align}\label{conc4}
\mathbb{P}\left(\langle M_A \rangle(t) \ge\Phi^A_2(\infty) + \alpha\right) &\le \mathbb{P}\left(S^A_2(\xi_A(t) + 1)  \ge \Phi^A_2(\infty) + \alpha\right)\\
& \le \mathbb{P}\left(S^A_2(\infty) \ge \Phi^A_2(\infty) + \alpha\right) \le C \exp\left\lbrace-C'f_*(A) \alpha \right\rbrace.\nonumber
\end{align}
Let $L(\alpha) :=  \Phi_2(\infty) + \alpha$ for $\alpha > 0$. Using \eqref{conc4} in \eqref{concin} with $L = L(\alpha)$, and noting $\Phi_2^A(\infty) \le \Phi_2(\infty)$, we get for $x > 0$, $\alpha >0$,
\begin{align*}
\mathbb{P}\left(\inf_{s \le t} M_A(s) \le -x\right) &\le \exp\left\lbrace -\sup_{z >0}[z x - L(\alpha) u^A(z)]\right\rbrace + \mathbb{P}\left(\langle M_A \rangle(t) \ge L(\alpha)\right)\nonumber\\
&\le \exp\left\lbrace -u^*(x,\alpha,A)\right\rbrace + C \exp\left\lbrace-C'f_*(A) \alpha \right\rbrace
\end{align*}
where $u^*(x,\alpha,A) = xf_*(A)\left[\log \left(1 + \frac{x}{L(\alpha) f_*(A)}\right) - 1\right] + L(\alpha) f_*(A)^2\log \left(1 + \frac{x}{L(\alpha) f_*(A)}\right)$.
Taking $t \rightarrow \infty$ in the above expression, we obtain
\begin{align}\label{conc5}
\mathbb{P}\left(\inf_{s<\infty} M_A(s) \le -x\right) \le \exp\left\lbrace -u^*(x,\alpha,A)\right\rbrace + C \exp\left\lbrace-C'f_*(A) \alpha \right\rbrace.
\end{align}
Using the inequality $\log(1+z) \ge z - z^2/2$ for $z \ge 0$,
$$
u^*(x,\alpha,A) \ge \frac{x^2}{2L(\alpha)} - \frac{x^3}{2L(\alpha)^2(f_*(A))}.
$$
If $x \ge \sqrt{(f_*)(f_*(A))} \Phi_2(\infty)$, taking $\alpha = \frac{2x}{\sqrt{(f_*)(f_*(A))}}$ in \eqref{conc5},  we obtain,
$$
\mathbb{P}\left(\inf_{s<\infty} M_A(s) \le -x\right) \le \exp\left\lbrace -\frac{x\sqrt{(f_*)(f_*(A))}}{24}\right\rbrace + C \exp\left\lbrace-2C'x\frac{\sqrt{f_*(A)}}{\sqrt{f_*}}  \right\rbrace.
$$
On the other hand, if $2f_*\left(\Phi_2(\infty) + \Phi_2^2(\infty)\right) \le x < \sqrt{(f_*)(f_*(A))} \Phi_2(\infty)$, taking $\alpha = \frac{x^2}{(f_*)(f_*(A))}$ in \eqref{conc5},
$$
\mathbb{P}\left(\inf_{s<\infty} M_A(s) \le -x\right) \le \exp\left\lbrace -\frac{x^2}{4\left(\Phi_2(\infty) + \Phi_2^2(\infty)\right)}\right\rbrace + C \exp\left\lbrace-\frac{C'x^2}{f_*} \right\rbrace.
$$
This proves \eqref{concfin}.

To prove \eqref{concfin2}, fix any $A \in \mathbb{N}_0$. Denote the jump times of $M_A$ by $\{\tau_i\}_{i \ge 1}$. By the strong Markov property and the same argument used to deduce \eqref{concfin}, there exist positive constants $C,C', x_0$, not depending on $A$, such that for any $i \ge 1$, for all $x \ge x_0$,
\begin{equation}\label{conc21}
\mathbb{P}\left(\inf_{\tau_i \le s< \infty} (M_A(s) - M_A(\tau_i)) \le -x \mid \mathcal{F}(\tau_i)\right) \le C\exp\left\lbrace -C' x \right\rbrace
\end{equation}
where $\mathcal{F}(\tau_i)$ is the stopped filtration corresponding to stopping time $\tau_i$. 
By \eqref{conc21}, we can choose $x_1 \ge x_0$, not depending on $A$, such that for any $i \ge 1$ and any $x \ge x_1$,
\begin{equation}\label{conc22}
\mathbb{P}\left(\inf_{\tau_i \le s< \infty} (M_A(s) - M_A(\tau_i)) > -x \mid \mathcal{F}(\tau_i)\right) \ge \frac{1}{2}.
\end{equation}
For $x \ge 0$, let $N^x := \inf\{i \ge 1: M_A(\tau_i) \ge 2x\}$ with the convention that $N^x = \infty$ if $\sup_{s < \infty} M_A(s) < 2 x$. Note that for any $t \ge 0$,
$$
\mathbb{P}\left(\sup_{s \le t} M_A(s) \ge 2x\right) = \mathbb{P}\left(N^x < \infty, \tau_{N^x} \le t\right).
$$
For any $t \ge 0$ and any $x \ge x_1$,
\begin{align}\label{conc23}
\mathbb{P}\left(M_A(t) >  x\right) &\ge \sum_{j=1}^{\infty}\mathbb{P}\left(N^x = j, \tau_j \le t\right)\mathbb{P}\left(\inf_{\tau_j \le s< \infty} (M_A(s) - M_A(\tau_j)) > -x \mid \mathcal{F}(\tau_j)\right)\\
&\ge \frac{1}{2}\sum_{j=1}^{\infty}\mathbb{P}\left(N^x = j, \tau_j \le t\right) = \frac{1}{2}\mathbb{P}\left(N^x < \infty, \tau_{N^x} \le t\right)\nonumber\\
& = \frac{1}{2}\mathbb{P}\left(\sup_{s \le t} M_A(s) \ge 2x\right),\nonumber
\end{align}
where the second inequality follows from \eqref{conc22}. Define $\K_A(t) := \Phi^A_2 \circ (\Phi^A_1)^{-1}(t), t \ge 0$. Note that, as $\K_A(\infty) = \Phi_2^A(\infty) < \infty$, for any $t \ge \K_A^{-1}\left(\frac{\K_A(\infty)}{2}\right)$,
$$
\K_A(3t) \le \K_A(\infty) \le 2 \K_A(t).
$$
Thus, by the second assertion of Lemma \ref{tbd} with $t' = \K_A^{-1}\left(\frac{\K_A(\infty)}{2}\right)$ and $D=2$,  for any $t \ge \max\left\lbrace\K_A^{-1}\left(\frac{\K_A(\infty)}{2}\right), \K_A^{-1}\left(\frac{1}{2f_*(A)^2}\right)\right \rbrace$ and any $x \in [0, 2t]$,
\begin{align}\label{conc24}
\mathbb{P}\left(M_A(t) >  x\right) &= \mathbb{P}\left(M_A(t) > \frac{x}{\K_A(t)} \K_A(t)\right) \le \exp\left\lbrace -\frac{x^2}{8\K_A^2(t)} \K_A(t)\right\rbrace\\
& = \exp\left\lbrace -\frac{x^2}{8\K_A(t)}\right\rbrace \le \exp\left\lbrace -\frac{x^2}{8\K(\infty)}\right\rbrace,\notag
\end{align}
where we used 
$$
\K_A(t) \le \K_A(\infty) = \int_0^{\infty}\frac{1}{f_A^2(z)}dz = \int_A^{\infty}\frac{1}{f^2(z)}dz \le \Phi_2(\infty) = \K(\infty)
$$
for the last inequality.
Thus, using \eqref{conc23} and \eqref{conc24}, we obtain for any $x \ge x_1$ and any $t \ge \max\left\lbrace\K_A^{-1}\left(\frac{\K_A(\infty)}{2}\right), \K_A^{-1}\left(\frac{1}{2f_*(A)^2}\right), \frac{x}{2}\right \rbrace$,
$$
\mathbb{P}\left(\sup_{s \le t} M_A(s) \ge 2x\right) \le 2 \exp\left\lbrace -\frac{x^2}{8\K(\infty)}\right\rbrace.
$$
The second assertion of the lemma now follows by taking $t \rightarrow \infty$ in the above bound.
\end{proof}
\subsection{Absolute continuity of martingale limit } Recall the process $M=M_0$ defined in \eqref{martdef}. By Lemma \ref{conc}, if $\Phi_2(\infty) < \infty$, $M$ is a uniformly integrable martingale and hence converges almost surely to a limiting random variable $M(\infty)$. The following lemma adapts the strategy of Proposition 2.2 in \cite{DM} to our setup to prove absolute continuity of the law of $M(\infty)$. In contrast to \cite{DM}, which requires concavity of $f$ in addition to $\Phi_2(\infty)< \infty$, we do not need this concavity assumption. A main reason for this is that the model in \cite{DM} is different and the associated martingales there are defined on a discrete time set. This `lattice effect' is overcome by requiring a concavity assumption on $f$ to use appropriate linear interpolations of $f$. In contrast, as our process $M$ is in continuous time, we can obtain differential equations \eqref{ac2} and \eqref{ac0} which makes the analysis more robust in $f$.

\begin{lemma}\label{ac}
Assume $\Phi_2(\infty)< \infty$. Then $M(t) \rightarrow M(\infty)$ almost surely where the law of $M(\infty)$ is absolutely continuous with respect to the Lebesgue measure.
\end{lemma}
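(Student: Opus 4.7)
The argument has two parts: almost sure convergence of $M$ and absolute continuity of $M(\infty)$.

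\textbf{Convergence.} By Lemma \ref{conc} (with $A=0$), the random variable $\sup_{s<\infty}|M(s)|$ has exponential tails, so $M$ is uniformly integrable. Combined with the martingale property from Lemma \ref{mgle}, this yields $M(t) \to M(\infty)$ almost surely and in $L^1$.

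\textbf{Absolute continuity.} The plan is to condition on all the exponentials other than $\Ey_0$ and show that the conditional distribution of $M(\infty)$ already has a density. Fix a realization of $(\Ey_i)_{i\ge 1}$ and view $M(\infty)$ as a function of $u := \Ey_0$, writing $\xi^{(u)}(\cdot)$ and $M^{(u)}(\cdot)$ for the processes constructed from this exponential sequence. The jump times of $\xi^{(u)}$ are
$$S_1^{(u)}(n) = \frac{u}{f(0)} + \sum_{i=1}^{n-1}\frac{\Ey_i}{f(i)}, \qquad n \ge 1,$$
so perturbing $u$ to $u + \delta$ translates every jump time to the right by $\delta/f(0)$. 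Therefore $\xi^{(u+\delta)}(t) = \xi^{(u)}(t - \delta/f(0))$ for $t \ge \delta/f(0)$, which gives
$$M^{(u+\delta)}(t) = \Phi_1(\xi^{(u)}(t-\delta/f(0))) - t = M^{(u)}(t-\delta/f(0)) - \delta/f(0).$$
Passing to the limit $t\to\infty$ using the convergence established in the first step yields the key identity
$$M^{(u+\delta)}(\infty) = M^{(u)}(\infty) - \delta/f(0).$$
Thus, on the conditioning $\sigma$-field $\mathcal{G} := \sigma(\Ey_i : i \ge 1)$, the map $u \mapsto M^{(u)}(\infty)$ is affine with slope $-1/f(0) \neq 0$. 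Since $\Ey_0 \sim \mathrm{Exp}(1)$ has an absolutely continuous distribution, its pushforward under this affine map has an explicit (shifted/reflected exponential) density on a half-line whose endpoint is $\mathcal{G}$-measurable. Integrating out $\mathcal{G}$ via Fubini gives absolute continuity of the unconditional law of $M(\infty)$. This is the continuous-time analogue of the differential-equation approach hinted at in the lemma statement: the ``ODE'' in $u$ is simply $\partial_u M^{(u)}(\infty) = -1/f(0)$, and one does not need concavity of $f$ to obtain it.

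\textbf{Main obstacle.} The delicate point is justifying the pointwise identity $M^{(u+\delta)}(\infty) = M^{(u)}(\infty) - \delta/f(0)$ on a set of full Lebesgue measure in $u$. The a.s.\ convergence of $M^{(u)}(t)$ holds for the product measure on $(\Ey_i)_{i\ge 0}$, so by Fubini, for $\mathbb{P}$-a.e.\ realization of $(\Ey_i)_{i\ge 1}$ it holds for Lebesgue-a.e.\ $u$. On that full-measure set the identity is valid, and (excluding a countable set of $u$-values where $\xi^{(u)}$ has a jump at a fixed time) both sides are right-continuous in $u$, so the identity extends to all $u$ in a dense open set, which suffices for the change-of-variables conclusion.
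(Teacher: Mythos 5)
Your argument is correct, and it takes a genuinely different route from the paper's. The paper works with the marginal distribution $p_v(t)=\mathbb{P}(N(t)=v)$: it derives the forward Kolmogorov equations \eqref{ac2}--\eqref{ac0} for $p_v$, proves $p_v(t)\le\bar f(0)/\bar f(v)$ by induction on $v$, and then sums these bounds over $v\in(a+t,b+t)\cap\mathbb S$ to show $\mathbb{P}(M(\infty)\in(a,b))\le\bar f(0)(b-a)$. You instead exploit the product structure of $(\Ey_i)_{i\ge0}$: conditionally on $(\Ey_i)_{i\ge1}$, the map $u\mapsto M^{(u)}(\infty)$ is exactly affine with slope $-1/f(0)$, so the conditional law of $M(\infty)$ is a reflected, scaled, translated exponential, and Fubini finishes. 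Both routes yield the same density bound $f(0)$ (note $\bar f(0)=f(\Phi_1^{-1}(0))=f(0)$): in your argument the conditional density $f(0)e^{-f(0)(c-x)}\mathbb{I}(x<c)$ is bounded by $f(0)$, which survives integrating out $(\Ey_i)_{i\ge1}$. Your approach is more elementary and conceptual; the paper's approach is closer in spirit to what it would have to do in a setting without a clean one-dimensional "reset" variable (e.g.\ the discrete-time model of \cite{DM}, where a concavity hypothesis is needed). Your argument is in fact a cleaner continuous-time exploitation of the same structural fact (the first interarrival time translates the whole trajectory), and it shows transparently why no concavity is required.

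One small remark on your ``main obstacle'' paragraph: the closing sentence about extending the identity to a dense open set via right-continuity in $u$ is unnecessary and slightly off. Once Fubini gives you, for a.e.\ $(\Ey_i)_{i\ge1}$, a full-Lebesgue-measure set $G\subset(0,\infty)$ of $u$'s on which $M^{(u)}(\infty)$ exists, the affine relation $M^{(u_2)}(\infty)=M^{(u_1)}(\infty)-(u_2-u_1)/f(0)$ for all $u_1,u_2\in G$ already determines the conditional law as a pushforward of $\mathrm{Exp}(1)$ restricted to a full-measure set, which is all you need; no topological extension argument is required, and the ``countable set of $u$-values where $\xi^{(u)}$ has a jump at a fixed time'' is not really the right description of the exceptional set anyway.
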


\begin{proof}
We prove the absolute continuity of the law of $M(\infty)$. Define $\mathbb{S} := \{\Phi_1(n) : n \in \mathbb{N}_0\}$. Recall $N(t) = \Phi_1(\xi(t)), \, t \ge 0$, where $\xi$ is the point process defined in \eqref{eqn:xi-f-def}. Also recall $M(t) = N(t)-t, \, t \ge 0$. For any $v \in \mathbb{S} \setminus \{0\}$, denote by $v^-$ the left neighbor of $v$ in $\mathbb{S}$. For $v \in \mathbb{S}$, let $p_v(t) := \mathbb{P}(N(t) = v)$. For $t \ge 0$ and small $\Delta t$, note that
\begin{multline}\label{ac1}
\left|\mathbb{P}(N(t+ \Delta t) = v) - \mathbb{P}(N(t) = v, \xi(t + \Delta t) - \xi(t) = 0)\right.\\
\left. - \mathbb{P}(N(t) = v^-, \xi(t + \Delta t) - \xi(t) = 1)\right| \le \mathbb{P}(N(t) \le v, \xi(t + \Delta t) - \xi(t) \ge 2).
\end{multline}
Recall $\bar{f} := f \circ \Phi_1^{-1}$. Let $\mu_v := \max\{\bar{f}(u): u \in \mathbb{S}, \ u \le v\}$. Since $N(t) \le v$ implies $\xi(t) \le \Phi_1^{-1}(v)$, we get
$$
\mathbb{P}(N(t) \le v, \xi(t + \Delta t) - \xi(t) \ge 2) \le \mathbb{P}\left(\operatorname{Poi}(\mu_v \Delta t) \ge 2\right) \le C(\mu_v\Delta t)^2,
$$
where a positive $C$ is a constant that does not depend on $v, \mu, t, \Delta t$. Moreover, 
$$
\mathbb{P}(N(t) = v, \xi(t + \Delta t) - \xi(t) = 0) = (1 - \bar{f}(v) \Delta t)\mathbb{P}(N(t) = v) + O((\Delta t)^2)
$$
and 
$$
\mathbb{P}(N(t) = v^-, \xi(t + \Delta t) - \xi(t) = 1) = (\bar{f}(v^-) \Delta t)\mathbb{P}(N(t) = v^-) + O((\Delta t)^2).
$$
Using these observations in \eqref{ac1}, we obtain
$$
\mathbb{P}(N(t+ \Delta t) = v) = (1 - \bar{f}(v) \Delta t)\mathbb{P}(N(t) = v) + (\bar{f}(v^-) \Delta t)\mathbb{P}(N(t) = v^-) + O((\Delta t)^2)
$$
and hence,
$$
p_v(t + \Delta t) - p_v(t) = (\bar{f}(v^-) \Delta t)\mathbb{P}(N(t) = v^-) - (\bar{f}(v) \Delta t)\mathbb{P}(N(t) = v) + O((\Delta t)^2).
$$
Dividing by $\Delta t$ and taking a limit as $\Delta t \rightarrow 0$, we obtain
\begin{equation}\label{ac2}
p_v'(t) = \bar{f}(v^-) \mathbb{P}(N(t) = v^-) - \bar{f}(v) \mathbb{P}(N(t) = v), \ \ v \in \mathbb{S}\setminus \{0\}, \ t \ge 0.
\end{equation}
Similarly,
\begin{equation}\label{ac0}
p_0'(t) = -\bar{f}(0) p_0(t), \ \ t \ge 0.
\end{equation}
We claim the following:
\begin{equation}\label{ac3}
p_v(t) \le \frac{\bar{f}(0)}{\bar{f}(v)}, \ \text{ for all } t \ge 0, \ v \in \mathbb{S}.
\end{equation}
We prove this by induction on $v \in \mathbb{S}$. For $v = 0$, from \eqref{ac0}, for $t \ge 0$,
$$
p_0(t) = p_0(0) e^{-\bar{f}(0)t} = e^{-\bar{f}(0)t} \le 1 = \frac{\bar{f}(0)}{\bar{f}(0)}
$$
proving \eqref{ac3} for $v = 0$. Suppose the \eqref{ac3} is true for $v^- \in \mathbb{S}$. Then, from \eqref{ac2}, for $t \ge 0$,
\begin{align*}
p_v'(t) &= \bar{f}(v^-) \mathbb{P}(N(t) = v^-) - \bar{f}(v) \mathbb{P}(N(t) = v)\\
& \le \bar{f}(0) - \bar{f}(v) \mathbb{P}(N(t) = v) = \bar{f}(0) - \bar{f}(v) p_v(t).
\end{align*}
From this differential inequality, and using $p_v(0) = 0$ for $v \neq 0$, we obtain for $t \ge 0$,
$$
p_v(t)  \le \bar{f}(0)e^{-\bar{f}(v)t}\int_0^te^{\bar{f}(v)s}ds = \frac{\bar{f}(0)(1 - e^{-\bar{f}(v)t})}{\bar{f}(v)} \le \frac{\bar{f}(0)}{\bar{f}(v)}.
$$
Thus, by induction, \eqref{ac3} is true. For any $-\infty < a < b < \infty$, consider the interval $I = (a,b)$. For any $t \ge 0$, let $m_t := \inf\{m \in \mathbb{N}_0 : \Phi_1(m) > a + t\}$ and $n_t := \sup\{m \in \mathbb{N}_0 : \Phi_1(m) < b + t\}$. Then
\begin{align*}
\mathbb{P}(M(t) \in I) &= \mathbb{P}(N(t) \in (a+t, b+t)) = \sum_{v \in (a+t, b+t) \cap \mathbb{S}} p_v(t)  \le \sum_{v \in (a+t, b+t) \cap \mathbb{S}} \frac{\bar{f}(0)}{\bar{f}(v)}\\
&= \bar{f}(0)\sum_{j=m_t}^{n_t}\frac{1}{\bar{f}(\Phi_1(j))} = \bar{f}(0)\sum_{j=m_t}^{n_t}\frac{1}{f(j)} = \bar{f}(0)\left(\Phi_1(n_t + 1) - \Phi_1(m_t) \right)\\
& \le \bar{f}(0)(b-a) + \bar{f}(0)\epsilon_t,
\end{align*}
where $\epsilon_t := \Phi_1(n_t+1) - \Phi_1(n_t) = \frac{1}{f(n_t)}$. As $\Phi_2(\infty) < \infty$, $f(n) \rightarrow \infty$ as $n \rightarrow \infty$, and hence, $\epsilon_t \rightarrow 0$ as $t \rightarrow \infty$. Hence, as $M(t)$ converges almost surely to $M(\infty)$, by Portmanteau theorem,
$$
\mathbb{P}(M(\infty) \in I) \le \liminf_{t \rightarrow \infty}\mathbb{P}(M(t) \in I)  \le \bar{f}(0) (b-a).
$$
Denoting by $\mathcal{G}$ the field of right semi-closed intervals, the above bound implies $\mathbb{P}(M(\infty) \in A) \le \bar{f}(0) \operatorname{Leb}(A)$ for all $A \in \mathcal{G}$, where $\operatorname{Leb}$ is the Lebesgue measure. By the monotone class theorem, $\mathbb{P}(M(\infty) \in A) \le \bar{f}(0) \operatorname{Leb}(A)$ for all $A \in \mathcal{B}(\mathbb{R})$, proving absolute continuity of the law of $M(\infty)$.
\end{proof}

\subsection{Couplings }The following lemma will be used to bound the degrees of vertices by more tractable processes in the random graph sequence $\{\G^*_n\}_{n \ge 0}$ as $n \rightarrow \infty$ for general attachment sequences $\{m_i\}_{i \ge 1}$. Recall that $s_n := \sum_{i=1}^{n}m_i$ for $n \ge 1$ and $s_0 = 0$. For $k \in \mathbb{N}$, let $s^{-1}(k)$ denote the unique $n$ such that $s_{n-1} \le k < s_{n}$. Recall that, for $k \in \mathbb{N}_0$, $d_0(k)$ denotes the degree of the root in $\mathcal{G}^*_k$ and $d_l(k)$ denotes the degree of vertex $v_l$ in $\mathcal{G}^*_k$ for $k > s_{l-1}$.

\begin{lemma}\label{coupling}
(i) Suppose $f$ is non-decreasing and suppose there exists $C_f>0$ such that $f(i) \le C_f (i+1)$ for all $i \ge 0$. Let $\underline{d}(\cdot)$ be the non-decreasing random process such that $\underline{d}(0) = d_0(0) = 0$, and for any $n \ge 1$ and any $s_{n-1} \le k < s_{n}$,
$$
\mathbb{P}\left(\underline{d}(k+1) = \underline{d}(k) + 1 \mid \underline{d}(i), i \le k\right) = 1 - \mathbb{P}\left(\underline{d}(k+1) = \underline{d}(k) \mid \underline{d}(i), i \le k\right) = \frac{f(\underline{d}(k))}{3C_f(k+1)}.
$$
Then there exists a coupling of the processes $\underline{d}(\cdot)$ and $d_0(\cdot)$ such that $\underline{d}(i) \le d_0(i)$ for all $i \ge 0$.

(ii) Suppose $f$ is non-decreasing and suppose there exists $\epsilon > 0$ such that $s^{-1}(k) \ge \epsilon (k+1)$ for all $k \ge 0$. Fix any $l \in \mathbb{N}_0$. Let $\overline{d}_l^{\epsilon}(\cdot)$ be a non-decreasing random process such that $d_l(s_{l-1}) = \overline{d}_l^{\epsilon}(s_{l-1}) = 0$, and for any $k \ge s_{l-1}$,
\begin{align*}
\mathbb{P}\left(\overline{d}_l^{\epsilon}(k+1) = \overline{d}_l^{\epsilon}(k) + 1 \mid \overline{d}_l^{\epsilon}(i), i \le k\right) &= 1 - \mathbb{P}\left(\overline{d}_l^{\epsilon}(k+1) = \overline{d}_l^{\epsilon}(k) \mid \overline{d}_l^{\epsilon}(i), i \le k\right)\\
& = \frac{f(\overline{d}_l^{\epsilon}(k))}{\epsilon (k+1) f(0)}.
\end{align*}
Then there exists a coupling of the processes $d_l(\cdot)$ and $\overline{d}_l^{\epsilon}(\cdot)$ such that $d_l(i) \le \overline{d}_l^{\epsilon}(i)$ for all $i \ge s_{l-1}$.
\end{lemma}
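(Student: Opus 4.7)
The plan is to realize both parts as standard monotone couplings driven by a single uniform random variable at each edge step, after bounding the random denominator $\sum_{j=0}^{n-1} f(d_j(k))$ that appears in the attachment probability \eqref{attachprob}. The key observation is that the attachment probability of a fixed vertex $v_l$ for the $(k+1)$-st edge is $f(d_l(k))/\sum_{j=0}^{n-1} f(d_j(k))$, where $n = s^{-1}(k+1)$ is the batch containing that edge. If we can sandwich the denominator between deterministic multiples of $(k+1)$, the attachment probabilities for $v_l$ are sandwiched by the ones defining $\underline{d}(\cdot)$ or $\overline{d}_l^{\epsilon}(\cdot)$ (up to monotonicity in the degree variable), after which monotone coupling preserves the pointwise ordering.

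For part (i), I would verify the deterministic upper bound $\sum_{j=0}^{n-1} f(d_j(k)) \le 3 C_f (k+1)$ as follows. For $s_{n-1} \le k < s_n$, the vertex $v_n$ has degree $k - s_{n-1}$ (it has received $k - s_{n-1}$ edges), so $\sum_{j=0}^{n-1} d_j(k) = 2k - (k - s_{n-1}) = k + s_{n-1}$. Using $f(i) \le C_f(i+1)$ and $n \le s_{n-1}+1 \le k+1$ (since $m_i \ge 1$), I get $\sum_{j=0}^{n-1} f(d_j(k)) \le C_f(k + s_{n-1} + n) \le C_f(3k+1) \le 3C_f(k+1)$. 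Hence $v_0$'s actual jump probability at step $k+1$ is at least $f(d_0(k))/(3C_f(k+1))$, which coincides with the jump probability of $\underline{d}$ after substituting $\underline{d}(k)$ for $d_0(k)$. Building the coupling inductively: given $\underline{d}(k) \le d_0(k)$, if $\underline{d}(k) < d_0(k)$ then any joint update preserves the ordering because $\underline{d}(k+1) \le \underline{d}(k)+1 \le d_0(k) \le d_0(k+1)$; if $\underline{d}(k) = d_0(k)$, then by monotonicity of $f$ and the bound just proved, $\underline{d}$'s jump probability $f(\underline{d}(k))/(3C_f(k+1))$ is dominated by $d_0$'s, so a single uniform $U_{k+1}$ can drive both increments to guarantee $\{\underline{d}\text{ jumps}\}\subseteq\{d_0\text{ jumps}\}$.

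For part (ii), I would use monotonicity of $f$ to get the matching lower bound $\sum_{j=0}^{n-1} f(d_j(k)) \ge n\,f(0) \ge \epsilon(k+1)\,f(0)$, where the last inequality uses the hypothesis $s^{-1}(k) \ge \epsilon(k+1)$ and the fact that $n = s^{-1}(k+1)$ equals $s^{-1}(k)$ whenever $s_{n-1} \le k < s_n$. Consequently $v_l$'s true jump probability at step $k+1$ is at most $f(d_l(k))/(\epsilon(k+1)f(0))$, while $\overline{d}_l^\epsilon$'s jump probability is $f(\overline{d}_l^\epsilon(k))/(\epsilon(k+1)f(0))$. The same monotone-coupling induction as above, run from the common initial condition $d_l(s_{l-1}) = \overline{d}_l^\epsilon(s_{l-1}) = 0$, preserves $d_l(k) \le \overline{d}_l^\epsilon(k)$, because when the two processes are equal the actual jump probability of $d_l$ is bounded by that of $\overline{d}_l^\epsilon$ (by monotonicity of $f$), and when $d_l(k) < \overline{d}_l^\epsilon(k)$ the inequality is automatically preserved at the next step.

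There is no real analytic obstacle here; the only thing to be careful about is the bookkeeping distinguishing $s_{n-1}, s_n, k, k+1$ and the fact that the target vertex $v_n$ of the new arriving edges is excluded from the denominator sum, which is what makes the bound $\sum_{j=0}^{n-1} d_j(k) = k + s_{n-1}$ (rather than $2k$) straightforward. I would present the coupling explicitly by sampling $U_{k+1} \sim \mathrm{Unif}(0,1)$ at each step and declaring, in case (i), that $\underline{d}$ jumps iff $U_{k+1} \le f(\underline{d}(k))/(3C_f(k+1))$ while $d_0$ jumps iff $U_{k+1} \le $ (its true attachment probability) -- and analogously for (ii) with the roles of the dominated and dominating processes swapped -- and then argue inductively that the ordering is preserved.
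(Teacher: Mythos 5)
Your proposal is correct and follows essentially the same route as the paper: both bound the random denominator $\sum_{j=0}^{n-1} f(d_j(k))$ by $3C_f(k+1)$ from above (part (i)) and by $\epsilon(k+1)f(0)$ from below (part (ii)) using $f(i)\le C_f(i+1)$, monotonicity of $f$, and $m_i\ge 1$, and then run an inductive monotone coupling driven by a single uniform per step. Your computation of the exact sum $\sum_{j=0}^{n-1}d_j(k)=k+s_{n-1}$ is a marginally sharper version of the paper's crude bound $\le 2k$, but lands at the same constant; one small slip is the remark that ``$n=s^{-1}(k+1)$ equals $s^{-1}(k)$ whenever $s_{n-1}\le k<s_n$'' (this fails when $k+1=s_n$), but the identity you actually use is $n=s^{-1}(k)$, which is correct, so the argument is unaffected.
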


\begin{proof}
(i) We proceed by induction. Suppose for some $n \ge 1$ and some $s_{n-1} \le k < s_{n}$, we have coupled the random variables $\{(\underline{d}(i), d_0(i)) : i \le k\}$ such that $\underline{d}(i) \le d_0(i)$ for all $i \le k$. Let $\mathcal{H}_k := \sigma\{(\underline{d}(i), d_0(i)) : i \le k\}$. Note that as $f$ is non-decreasing, $f(i) \le C_f (i+1)$ for all $i \ge 0$, $\underline{d}(k) \le d_0(k)$ under the coupling, and $k \ge s_{n-1} \ge n-1$,
$$
\frac{f(d_0(k))}{\sum_{j=0}^{n-1}f(d_j(k))} \ge \frac{f((\underline{d}(k))}{\sum_{j=0}^{n-1}C_f(d_j(k) + 1)} \ge \frac{f((\underline{d}(k))}{C_f(2k +n)} \ge \frac{f((\underline{d}(k))}{3C_f(k + 1)},
$$
where we used $\sum_{j=0}^{n-1}d_j(k) \le \sum_{j=0}^{n}d_j(k) = 2k$ to obtain the third inequality above.

Sample a $U(0,1)$ random variable $U$ independent of $\mathcal{H}_k$ and let $(\underline{d}(k+1), d_0(k+1))= (\underline{d}(k)+1, d_0(k) + 1)$ if $0<U \le \frac{f(\underline{d}(k))}{3C_f(k+1)}$, $(\underline{d}(k+1), d_0(k+1))= (\underline{d}(k), d_0(k)+1)$ if $\frac{f(\underline{d}(k))}{3C_f(k+1)} < U \le \frac{f(d_0(k))}{\sum_{j=0}^{n-1}f(d_j(k))}$, and $(\underline{d}(k+1), d_0(k+1))= (\underline{d}(k), d_0(k))$ if $\frac{f(d_0(k))}{\sum_{j=0}^{n-1}f(d_j(k))} < U <1$. Under this coupling, we obtain $\underline{d}(i) \le d_0(i)$ for all $i \le k+1$. Thus, by induction, we obtain the required coupling.

(ii) The proof follows similarly upon noting that, if we have coupled $\{d_l(i), \overline{d}_l^{\epsilon}(i) : i \le k\}$ such that $d_l(i) \le \overline{d}_l^{\epsilon}(i)$ for all $i \le k$, then
$$
\frac{f(\overline{d}_l^{\epsilon}(k))}{\epsilon (k+1) f(0)} \ge \frac{f(\overline{d}_l^{\epsilon}(k))}{s^{-1}(k) f(0)} \ge  \frac{f(d_l(k))}{\sum_{j=0}^{s^{-1}(k)-1} f(d_j(k))}.
$$
\end{proof}
The following lemma, which is routine to check upon noting that $d_n(s_n) = m_{n}$ and using the properties of exponential distributions, will play a crucial role in our analysis. It describes the joint evolution of the degrees of vertices $v_i$ and $v_n$ for $i \ge 0$, $n >i$, in terms of a bivariate point process observed at jump times. Moreover, this process is a Markov chain whose transition probabilities are explicitly recorded.
\begin{lemma}\label{embmult}
Fix any $i\ge 0, n > i$. Let $\tilde{\tau}_0 = s_n$ and for $j \ge 1$, let $\tilde{\tau}_j := \inf\{k > \tilde{\tau}_{j-1}: (d_i(k), d_n(k)) \neq (d_i(\tilde{\tau}_{j-1}), d_n(\tilde{\tau}_{j-1}))\}$. For $j \ge 0$, let $\mathbf{X}_j := (X^{(1)}_j, X^{(2)}_j) := (d_i(\tilde{\tau}_j ), d_n(\tilde{\tau}_j ))$.
Let $\{\xi_A(\cdot) : A \in \mathbb{N}_0\}$ be independent point processes having laws prescribed in Section \ref{techlem} and independent of $\sigma\{\mathcal{G}^*_{k} : k \ge 0\}$. Let $\tilde{\xi}_1(\cdot) := d_i(s_n) + \xi_{d_i(s_n)}(\cdot)$ and $\tilde{\xi}_2(\cdot) :=m_n + \xi_{m_n}(\cdot)$. Define $\sigma_0 = 0$ and for $j \ge 1$, let $\sigma_j := \inf\{t > \sigma_{j-1}: (\tilde{\xi}_1(t), \tilde{\xi}_2(t)) \neq (\tilde{\xi}_1(\sigma_{j-1}), \tilde{\xi}_2(\sigma_{j-1}))\}$. For $j \ge 0$, let $\mathbf{Y}_j := (Y^{(1)}_j, Y^{(2)}_j) := (\tilde{\xi}_1(\sigma_j), \tilde{\xi}_2(\sigma_j))$. Then $\{\mathbf{X}_j\}_{j \ge 0}$ and $\{\mathbf{Y}_j\}_{j \ge 0}$ are Markov chains having the same distribution. This Markov chain has starting point $(d_i(s_n), m_n)$ and its transition probabilities are given for $j \ge 1$ by
\begin{multline*}
\mathbb{P}\left((X^{(1)}_j, X^{(2)}_j) = (X^{(1)}_{j-1} + 1, X^{(2)}_{j-1}) \mid (X^{(1)}_{j-1}, X^{(2)}_{j-1})\right)\\
= 1 - \mathbb{P}\left((X^{(1)}_j, X^{(2)}_j) = (X^{(1)}_{j-1}, X^{(2)}_{j-1} + 1) \mid (X^{(1)}_{j-1}, X^{(2)}_{j-1})\right) = \frac{f(X^{(1)}_{j-1})}{f(X^{(1)}_{j-1}) + f(X^{(2)}_{j-1})}.
\end{multline*}
\end{lemma}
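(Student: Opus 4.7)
The plan is to verify separately that $\{\mathbf{X}_j\}_{j \ge 0}$ and $\{\mathbf{Y}_j\}_{j \ge 0}$ are each Markov chains starting from the same state $(d_i(s_n), m_n)$ with the transition kernel displayed in the lemma; equality in distribution then follows. The common starting value is immediate: $\tilde{\tau}_0 = s_n$ gives $\mathbf{X}_0 = (d_i(s_n), d_n(s_n)) = (d_i(s_n), m_n)$ because $v_n$ has just completed its arrival with $m_n$ outgoing edges, and $\mathbf{Y}_0 = (\tilde{\xi}_1(0), \tilde{\xi}_2(0)) = (d_i(s_n), m_n)$ by construction.

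For $\{\mathbf{Y}_j\}$, the defining sum $S_1^A(k) = \sum_{l=0}^{k-1} \Ey_l / f(A+l)$ exhibits $\xi_A$ as a pure-birth process whose holding time at state $A+l$ is exponential with rate $f(A+l)$. Consequently $\tilde{\xi}_1$ (resp.\ $\tilde{\xi}_2$) is a pure-birth process whose holding time at state $a$ (resp.\ $b$) has rate $f(a)$ (resp.\ $f(b)$), and since the two are built from disjoint independent exponential families and are independent of $\sigma\{\G^*_k\}$, they are independent of each other and of $(d_i(s_n), m_n)$. Given $\mathbf{Y}_{j-1} = (a,b)$, the memoryless property yields fresh independent residual waiting times at $\sigma_{j-1}$ with rates $f(a)$ and $f(b)$, so the first to fire is the first coordinate with probability $f(a)/(f(a)+f(b))$. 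This gives the Markov property and the claimed transition.

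For $\{\mathbf{X}_j\}$, denote by $\mathcal{H}_{j-1}$ the $\sigma$-algebra generated by $\{\G^*_l : l \le \tilde{\tau}_{j-1}\}$ and assume $\mathbf{X}_{j-1} = (a,b)$. At each subsequent edge-attachment step $k > \tilde{\tau}_{j-1}$, equation \eqref{attachprob} says the new edge connects to $v_i$ with probability $f(a)/\Sigma_k$ and to $v_n$ with probability $f(b)/\Sigma_k$, where $\Sigma_k := \sum_{l=0}^{s^{-1}(k)-1} f(d_l(k-1))$. Between $\tilde{\tau}_{j-1}$ and $\tilde{\tau}_j$ the pair $(d_i(k), d_n(k))$ stays equal to $(a,b)$, while $\Sigma_k$ evolves in a complicated history-dependent way (because of attachments to third vertices and arrivals of new vertices $v_m$ with their own outgoing edges); however, conditional on the edge at step $k$ attaching to $\{v_i, v_n\}$, the ratio $f(a)/(f(a)+f(b))$ is history-free. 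Summing over possible waiting times $t \ge 1$ via the tower property yields
\begin{equation*}
\mathbb{P}\bigl(\mathbf{X}_j = (a+1, b) \,\big|\, \mathcal{H}_{j-1}\bigr) = \frac{f(a)}{f(a)+f(b)},
\end{equation*}
and symmetrically for the transition to $(a, b+1)$. Since this depends only on $(a,b)$, $\{\mathbf{X}_j\}$ is a Markov chain with the claimed kernel.

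No genuine obstacle is anticipated; the only subtlety is on the discrete side, where one must keep in mind that $\Sigma_k$ evolves unpredictably between successive jump times, but cancels upon conditioning on the edge attaching to $\{v_i, v_n\}$. The hypothesis that $\{\xi_A : A \in \mathbb{N}_0\}$ is independent of $\sigma\{\G^*_k\}$ is used crucially to ensure that $\tilde{\xi}_1, \tilde{\xi}_2$ retain the correct pure-birth dynamics when initialized from the \emph{random} starting offsets $d_i(s_n)$ and $m_n$, so that the $\mathbf{Y}$ chain matches $\mathbf{X}$ in law starting from the same state.
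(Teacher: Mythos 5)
Your proof is correct and follows the same route the paper has in mind: the paper dismisses this lemma as "routine to check upon noting that $d_n(s_n) = m_n$ and using the properties of exponential distributions," and you carry out exactly that — identify the common (random) initial state, invoke the memoryless property of the exponential holding times for the $\mathbf{Y}$ chain, and observe that on the discrete side the denominator $\Sigma_k$ in \eqref{attachprob} cancels upon conditioning on the edge landing in $\{v_i, v_n\}$.
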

\begin{corollary}\label{distlim}
Suppose $\Phi_2(\infty)< \infty$. Then, almost surely, for any $i \ge 0$ and any $n>i$, $d_i(k) = d_n(k)$ for only finitely many $k \ge s_n$.
\end{corollary}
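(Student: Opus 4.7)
Fix $i \ge 0$ and $n > i$; the corollary then follows by a countable union over all such pairs. Set $A_1 := d_i(s_n)$ and $A_2 := m_n$. By Lemma \ref{embmult}, the discrete Markov chain $\{\mathbf X_j\}$ tracking $(d_i, d_n)$ at its jump times has the same law as the chain $\{\mathbf Y_j\}$ extracted from the continuous embedding $\tilde\xi_l := A_l + \xi_{A_l}$ ($l=1,2$) at its jump times. The non-explosion of $\xi_A$ guaranteed by \eqref{sumfinfty} forces $\sigma_j\to\infty$ a.s., and the corresponding $\tilde\tau_j$ are strictly increasing positive integers and hence also tend to $\infty$. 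The event $\{d_i(k) = d_n(k) \text{ for only finitely many } k \ge s_n\}$ is therefore equivalent to $\{X_j^{(1)} \ne X_j^{(2)} \text{ for all large } j\}$, which by distributional equivalence has the same probability as $\{\tilde\xi_1(t) \ne \tilde\xi_2(t) \text{ for all large } t\}$. The task thus reduces to proving the latter holds almost surely.

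Using $\Phi^A_1(k)=\Phi_1(A+k)-\Phi_1(A)$ in the definition $M_{A_l}(t)=\Phi^{A_l}_1(\xi_{A_l}(t))-t$ yields the identity
\[
\Phi_1(\tilde\xi_l(t)) = t + \Phi_1(A_l) + M_{A_l}(t), \qquad l=1,2.
\]
Since $\Phi_1$ is strictly increasing on $\mathbb{N}_0$ and $\tilde\xi_l(t)\in\mathbb{N}_0$,
\[
\tilde\xi_1(t) = \tilde\xi_2(t) \ \Longleftrightarrow \ M_{A_1}(t) - M_{A_2}(t) = \Phi_1(A_2) - \Phi_1(A_1).
\]
Under $\Phi_2(\infty)<\infty$, the bound $\mathbb E\langle M_{A_l}\rangle_\infty\le\Phi^{A_l}_2(\infty)\le\Phi_2(\infty)$ established in the proof of Lemma \ref{conc} shows that each $M_{A_l}$ is an $L^2$-bounded martingale, hence converges a.s.\ to a finite limit $M_{A_l}(\infty)$. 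Set
\[
L := M_{A_1}(\infty) - M_{A_2}(\infty) - \bigl(\Phi_1(A_2) - \Phi_1(A_1)\bigr).
\]
On $\{L\ne 0\}$, $M_{A_1}(t) - M_{A_2}(t) - (\Phi_1(A_2) - \Phi_1(A_1))$ is eventually within $|L|/2$ of $L$ and so nonzero, giving $\tilde\xi_1(t)\ne\tilde\xi_2(t)$ for all large $t$. Everything therefore reduces to proving $\mathbb P(L = 0) = 0$.

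To verify this, condition on $(A_1,A_2)=(a_1,a_2)$. Conditionally, $\xi_{A_1}$ and $\xi_{A_2}$ are independent point processes associated with the shifted attachment functions $f_{a_l}$, which satisfy $\Phi^{a_l}_2(\infty)\le\Phi_2(\infty)<\infty$; the proof of Lemma \ref{ac} applies verbatim with $f$ replaced by $f_{a_l}$ and yields that each $M_{a_l}(\infty)$ has a Lebesgue density. Since the convolution of two absolutely continuous laws is absolutely continuous, the conditional law of $M_{A_1}(\infty)-M_{A_2}(\infty)$ has no atom, so $\mathbb P(L=0\mid A_1,A_2)=0$ a.s., and $\mathbb P(L=0)=0$ upon integration. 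The main obstacle lies in this absolute-continuity input: one must extend Lemma \ref{ac} uniformly across the family of shifted martingale limits $\{M_A(\infty):A\in\mathbb{N}_0\}$ and then pass cleanly through the conditioning on the random pair $(A_1,A_2)$; the remainder is a direct use of the embedding and standard martingale convergence.
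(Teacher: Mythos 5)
Your proof is correct and takes essentially the same route as the paper: via Lemma~\ref{embmult} reduce to the continuous-time pair $(\tilde\xi_1,\tilde\xi_2)$, use martingale convergence and (the $f_A$-shifted version of) Lemma~\ref{ac} to get a.s.\ limits with absolutely continuous conditional laws, and conclude the limits almost surely differ. The only cosmetic difference is that the paper compares the limits of $\Phi_1(\tilde\xi_l(t))-t$ directly rather than isolating the constant $\Phi_1(A_2)-\Phi_1(A_1)$, and to rule out $L=0$ it suffices that one of the two conditionally independent limits be absolutely continuous, so the appeal to convolutions is slightly more than needed.
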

\begin{proof}
Define the processes
\begin{align*}
\tilde M_1(t) &:= \sum_{j=d_i(s_n)}^{\tilde{\xi}_1(t)-1}\frac{1}{f(j)} - t = \sum_{j=0}^{\tilde{\xi}_1(t) - d_i(s_n)-1}\frac{1}{f_{d_i(s_n)}(j)} - t ,\\
\tilde M_2(t) &:= \sum_{j=m_n}^{\tilde{\xi}_2(t) -1}\frac{1}{f(j)} - t = \sum_{j=0}^{\tilde{\xi}_2(t) - m_n -1}\frac{1}{f_{m_n}(j)} - t,
\end{align*}
where $\tilde{\xi}_1(\cdot)$ and $\tilde{\xi}_2(\cdot)$ are as in Lemma \ref{embmult}. By Lemma \ref{mgle}, the above processes are martingales and by Lemma \ref{ac}, both of them converge almost surely as $t \rightarrow \infty$ to limiting random variables, which are conditionally independent given $\sigma\{\mathcal{G}^*_{j} : j \le s_n\}$, and whose laws are absolutely continuous with respect to Lebesgue measure. In particular, almost surely,
$$
\lim_{t \rightarrow \infty} \sum_{j=0}^{\tilde{\xi}_1(t)-1}\frac{1}{f(j)} - t \neq \lim_{t \rightarrow \infty} \sum_{j=0}^{\tilde{\xi}_2(t)-1}\frac{1}{f(j)} - t. 
$$ 
By Lemma \ref{embmult}, the corollary follows.
\end{proof}

\subsection{Checking Assumption \eqref{eqn:prop-under-lamb} }In the case when $m_i =1$ for all $i \ge 1$, recall the continuous time embedding of the sequence of random trees discussed in Section \ref{ctbp}. The following lemma gives checkable conditions under which Assumption \eqref{eqn:prop-under-lamb} is satisfied when $\Phi_2(\infty)< \infty$ and $\limsup_{i \rightarrow \infty} \frac{f(i)}{i} < \infty$. Recall the function $\hat{\rho}(\cdot)$ defined in \eqref{eqn:rho-hat-def}.
\begin{lemma}\label{checkcond}
Assume $\Phi_2(\infty)< \infty$ and $\bar{D} := \limsup_{i \rightarrow \infty} \frac{f(i)}{i} < \infty$. Then Assumption \eqref{eqn:prop-under-lamb} is satisfied if $\hat{\rho}(\bar{D}) >1$. In particular, Assumption \eqref{eqn:prop-under-lamb} is satisfied if $\Phi_2(\infty)< \infty$ and $\lim_{i \rightarrow \infty} \frac{f(i)}{i} = 0$.
\end{lemma}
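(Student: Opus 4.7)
The plan is to verify the two clauses of Assumption \eqref{eqn:prop-under-lamb} separately: (i) that $\underline{\lambda}<\infty$ (i.e.\ $\hat\rho(\lambda)<\infty$ for some $\lambda$) and (ii) that $\lim_{\lambda\downarrow\underline{\lambda}}\hat\rho(\lambda)>1$. The natural target for (i) is to show $\underline{\lambda}\le \bar D$, after which (ii) will follow almost formally from monotonicity of $\hat\rho$ together with the hypothesis $\hat\rho(\bar D)>1$. The role of $\Phi_2(\infty)<\infty$ is to keep the Taylor correction in $\log(1+\lambda/f(i))$ globally bounded, which is what turns a pointwise asymptotic into a genuine summability estimate on $\hat\rho$.

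For (i), I would rewrite a general term of $\hat\rho(\lambda)$ as
\[
\prod_{i=0}^{k-1}\frac{f(i)}{\lambda+f(i)} \;=\; \exp\!\left(-\sum_{i=0}^{k-1}\log\!\big(1+\lambda/f(i)\big)\right),
\]
and use $\log(1+x)\ge x-x^2/2$ for $x\ge 0$ to obtain
\[
-\sum_{i=0}^{k-1}\log(1+\lambda/f(i))\;\le\; -\lambda\,\Phi_1(k) + \tfrac{\lambda^2}{2}\Phi_2(k) \;\le\; -\lambda\,\Phi_1(k) + \tfrac{\lambda^2}{2}\Phi_2(\infty).
\]
Fixing $\lambda>\bar D$, pick $\eta>0$ with $\lambda>\bar D+\eta$. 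By definition of $\bar D$, $f(i)\le (\bar D+\eta)i$ for $i\ge i_0$, whence $\Phi_1(k)\ge \frac{1}{\bar D+\eta}\log k - C$ for some constant $C$. Substituting, the $k$-th term of $\hat\rho(\lambda)$ is at most a constant times $k^{-\lambda/(\bar D+\eta)}$, which is summable. Thus $\hat\rho(\lambda)<\infty$ for every $\lambda>\bar D$, giving $\underline{\lambda}\le\bar D<\infty$.

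For (ii), note that $\hat\rho$ is monotone non-increasing on $(\underline{\lambda},\infty)$. If $\underline{\lambda}<\bar D$, then for any $\lambda\in(\underline{\lambda},\bar D)$, $\hat\rho(\lambda)\ge\hat\rho(\bar D)>1$, so the left limit at $\underline{\lambda}$ is at least $\hat\rho(\bar D)>1$. If $\underline{\lambda}=\bar D$, then as $\lambda\downarrow \bar D$ each factor $f(i)/(\lambda+f(i))$ increases pointwise to $f(i)/(\bar D+f(i))$; the monotone convergence theorem applied to the series gives $\hat\rho(\lambda)\uparrow \hat\rho(\bar D)>1$. Either way, the limit in \eqref{eqn:prop-under-lamb} exceeds $1$.

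Finally, for the ``in particular'' clause, assume $\lim_{i\to\infty} f(i)/i = 0$, so $\bar D=0$. The estimate from step (i) now applies with arbitrarily small $\eta$, so $\hat\rho(\lambda)<\infty$ for every $\lambda>0$, yielding $\underline{\lambda}=0$. As $\lambda\downarrow 0$, each product $\prod_{i=0}^{k-1} f(i)/(\lambda+f(i))$ increases to $1$, so by monotone convergence $\hat\rho(\lambda)\uparrow \sum_{k=1}^{\infty}1=\infty$, which is $>1$, and \eqref{eqn:prop-under-lamb} holds. The only mildly delicate point in the whole argument is confirming $\underline{\lambda}\le\bar D$ when $\bar D>0$; everything else is monotone convergence and the lower bound on $\Phi_1$.
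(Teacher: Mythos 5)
Your proposal is correct. For the first clause ($\underline{\lambda}\le\bar D$) your derivation matches the paper's essentially line for line: both use $\log(1+z)\ge z-z^2/2$ to bound the $k$-th term of $\hat\rho(\lambda)$ by $e^{\lambda^2\Phi_2(\infty)/2}e^{-\lambda\Phi_1(k)}$, then exploit $f(i)\le(\bar D+\eta)i$ for large $i$ to get $\Phi_1(k)\gtrsim \frac{\log k}{\bar D+\eta}$ and hence summability whenever $\lambda>\bar D+\eta$. Where you diverge is the second clause. The paper argues \emph{via truncation and continuity}: since $\hat\rho(\bar D)>1$, some finite partial sum $\sum_{k=1}^{k_0}\prod_{i<k}\frac{f(i)}{\bar D+f(i)}$ already exceeds $1$, and this finite sum is continuous in $\lambda$, so one can push $\lambda$ up to $\bar D+\delta$ and still have $\hat\rho(\bar D+\delta)>1$; monotonicity of $\hat\rho$ plus $\underline{\lambda}\le\bar D<\bar D+\delta$ then gives the left-limit claim. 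You instead argue \emph{via monotone convergence}: the terms $\prod_{i<k}\frac{f(i)}{\lambda+f(i)}$ increase as $\lambda\downarrow\underline\lambda$, so $\hat\rho(\lambda)\uparrow\hat\rho(\underline\lambda)\ge\hat\rho(\bar D)>1$ (your case split on whether $\underline\lambda<\bar D$ or $\underline\lambda=\bar D$ is in fact unnecessary, since MCT handles both uniformly). The two arguments are logically equivalent in outcome; yours is a touch more streamlined and avoids introducing the auxiliary truncation level $k_0$, while the paper's is more explicit about producing a concrete $\lambda>\underline\lambda$ at which $\hat\rho>1$ (which also covers the "in particular" clause, where $\hat\rho(\bar D)=\hat\rho(0)=\infty$, without any case analysis). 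Both are sound.
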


\begin{proof}
Using the inequality $z \ge \log(1+z) \ge z - z^2/2$ for $z \ge 0$, note that for any $\lambda >0$,
$$
-\lambda \Phi_1(k) \le \log \prod_{i=0}^{k-1} \frac{f(i)}{\lambda + f(i)} = - \sum_{i=0}^{k-1}\log \left(1 + \frac{\lambda}{f(i)}\right) \le -\lambda \Phi_1(k) + \frac{\lambda^2}{2} \Phi_2(\infty)
$$
Hence, for any $\lambda >0$,
\begin{equation}\label{rhohatest}
\sum_{k=1}^{\infty} \exp \left\lbrace -\lambda \Phi_1(k)  \right \rbrace \le \hat{\rho}(\lambda) \le \exp \left\lbrace \frac{\lambda^2}{2} \Phi_2(\infty)  \right \rbrace \sum_{k=1}^{\infty} \exp \left\lbrace -\lambda \Phi_1(k)  \right \rbrace.
\end{equation}
Fix any $\epsilon>0$. There exists $n_{\epsilon}>0$ such that $f(i) \le (\bar{D} + \epsilon) i$ for all $i \ge n_{\epsilon}$. Thus, for any $n > n_{\epsilon}$,
$$
\Phi_1(n) \ge \sum_{i=n_{\epsilon}}^{n-1}\frac{1}{f(i)} \ge \frac{1}{\bar{D} + \epsilon}\sum_{i=n_{\epsilon}}^{n-1}\frac{1}{i} \ge \frac{1}{\bar{D} + \epsilon}\int_{n_{\epsilon}}^{n}\frac{1}{x}dx = \frac{\log n - \log n_{\epsilon}}{\bar{D} + \epsilon}.
$$
From this bound and \eqref{rhohatest}, for any $\lambda > \bar{D} + \epsilon$,
\begin{multline*}
\hat{\rho}(\lambda) - \exp \left\lbrace \frac{\lambda^2}{2} \Phi_2(\infty)  \right \rbrace \sum_{k=1}^{n_{\epsilon}} \exp \left\lbrace -\lambda \Phi_1(k)  \right \rbrace \le  \exp \left\lbrace \frac{\lambda^2}{2} \Phi_2(\infty)  \right \rbrace \sum_{k=n_{\epsilon}+1}^{\infty} \exp \left\lbrace -\lambda \Phi_1(k)  \right \rbrace\\
\le \exp \left\lbrace \frac{\lambda^2}{2} \Phi_2(\infty)  \right \rbrace \sum_{k=n_{\epsilon}+1}^{\infty} \exp \left\lbrace -\frac{\lambda}{\bar{D} + \epsilon} (\log n - \log n_{\epsilon})\right \rbrace \\
= \exp \left\lbrace \frac{\lambda^2}{2} \Phi_2(\infty) + \frac{\lambda \log n_{\epsilon}}{\bar{D} + \epsilon} \right \rbrace \sum_{k=n_{\epsilon}+1}^{\infty} \frac{1}{n^{\lambda/(\bar{D} + \epsilon)}} < \infty.
\end{multline*}
Therefore, as $\epsilon>0$ is arbitrary, $\hat{\rho}(\lambda) < \infty$ for all $\lambda  > \bar{D}$. Thus, $\underline{\lambda} \le \bar{D}$. Moreover, as $\hat{\rho}(\bar{D}) >1$, there exists $k_0 \in \mathbb{N}$ such that
$
\sum_{k=1}^{k_0} \prod_{i=0}^{k-1} \frac{f(i)}{\bar{D} + f(i)} > 1.
$
By continuity of $\lambda \mapsto \sum_{k=1}^{k_0} \prod_{i=0}^{k-1} \frac{f(i)}{\lambda + f(i)}$, we can choose $\delta>0$ such that $\sum_{k=1}^{k_0} \prod_{i=0}^{k-1} \frac{f(i)}{\bar{D} + \delta + f(i)} > 1$. Hence,
$$
\hat{\rho}(\bar{D} + \delta) \ge \sum_{k=1}^{k_0} \prod_{i=0}^{k-1} \frac{f(i)}{\bar{D} + \delta + f(i)} > 1.
$$
As $\hat{\rho}(\cdot)$ is non-increasing in $\lambda$ and $\underline{\lambda} \le \bar{D}$,
$$
\lim_{\lambda\downarrow\underline{\lambda}} \hat{\rho}(\lambda) \ge \hat{\rho}(\bar{D} + \delta)>1.
$$
Thus, Assumption \eqref{eqn:prop-under-lamb} holds under the hypothesis of the lemma. If $\lim_{i \rightarrow \infty} \frac{f(i)}{i} = 0$, $\bar{D} = 0$. As $\hat{\rho}(0) = \infty$, Assumption \eqref{eqn:prop-under-lamb} holds if $\Phi_2(\infty)< \infty$ and $\lim_{i \rightarrow \infty} \frac{f(i)}{i} = 0$.
\end{proof}

\subsection{Functional Central Limit Theorem }Now we explore the case when $\Phi_2(\infty) = \infty$. The following lemma gives a functional central limit theorem for the martingale $M_A(\cdot)$ for any $A \in \mathbb{N}_0$. Recall $\mathcal{K}(t) = \Phi_2 \circ \Phi_1^{-1}(t), t \ge 0$. This lemma generalizes the last assertion of Lemma 2.1 in \cite{DM} by replacing monotonicity of $f$ (required in the proof in \cite{DM}) by a much more general  
`continuity assumption' \eqref{Kcont0} on $\K(\cdot)$. As in \cite{DM}, the proof proceeds by showing that the quadratic variation of an appropriately scaled, time-changed martingale converges to that of Brownian motion. However, in the absense of monotonicity of $f$, we cannot use a `law of large numbers' type result for the martingale to directly estimate its quadratic variation (see the displays after equation (5) and before equation (6) in \cite{DM}). One needs to do a little more work to estimate the quadratic variation, as shown in the proof of Lemma \ref{fcltper}. 

In the following, $D([0,\infty)  :  \mathbb{R})$ denotes the space of c\`adl\`ag functions equipped with the Skorohod topology.
\begin{lemma}\label{fcltper}
Assume $\Phi_2(\infty) = \infty$ and further
\begin{equation}\label{Kcont0}
\lim_{\delta \downarrow 0} \limsup_{t \rightarrow \infty} \frac{\K((1+\delta)t)}{\K(t)} = 1.
\end{equation}
Then, for any $A \in \mathbb{N}_0$, the processes $\{M_A^{(n)}(\cdot)\}_{n \ge 1}$ defined by 
\begin{equation}\label{scaledproc}
M_A^{(n)}(t) := n^{-1/2} M_A(\K^{-1}(nt)), \ t \ge 0, \ n \ge 1,
\end{equation}
converge weakly to standard Brownian motion in $D([0,\infty)  :  \mathbb{R})$ as $n \rightarrow \infty$.
\end{lemma}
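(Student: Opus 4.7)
The plan is to invoke a martingale functional central limit theorem (e.g., \cite[Theorem 7.1.4]{ethier-kurtz}, or Jacod--Shiryaev). Since $t \mapsto \K^{-1}(nt)$ is a deterministic, non-decreasing, continuous time change, $M_A^{(n)}$ is a martingale with respect to the time-changed filtration $\{\mathcal{F}(\K^{-1}(nt))\}_{t \ge 0}$. The jumps of $M_A^{(n)}$ on any compact interval are bounded deterministically by $n^{-1/2}/f_*(A)$, which vanishes as $n\to\infty$. Hence the Lindeberg/negligibility condition for the martingale FCLT is automatic, and the task reduces to showing that the predictable quadratic variation satisfies
\[
\langle M_A^{(n)}\rangle(t) \;=\; \frac{1}{n}\,\langle M_A\rangle\!\left(\K^{-1}(nt)\right) \;\xrightarrow{P}\; t \qquad \text{for each fixed } t\ge 0.
\]

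The first step is to rewrite the quadratic variation formula of Lemma \ref{mgle} in closed form. Partitioning the integral $\int_0^T f_A(\xi_A(s-))^{-1}\,ds$ along the jump times $S_1^A(i)$ of $\xi_A$, the increment over the interval $[S_1^A(i), S_1^A(i+1))$ equals $\Ey_i^A / f_A(i)^2$, so
\[
\langle M_A\rangle(T) \;=\; S_2^A(\xi_A(T)) + r(T), \qquad |r(T)| \le 1/f_*(A).
\]
Since $S_2^A(m)$ is a sum of independent nonnegative random variables with mean $\Phi_2^A(m)$ and variance $\sum_{i=0}^{m-1} f_A(i)^{-4} \le f_*(A)^{-2}\,\Phi_2^A(m)$, and $\Phi_2^A(\infty) = \Phi_2(\infty) - \Phi_2(A) = \infty$ under Assumption C1, Chebyshev gives $S_2^A(m)/\Phi_2^A(m) \xrightarrow{P} 1$ as $m \to \infty$.

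The second step is to replace the random index $\xi_A(T)$ by $(\Phi_1^A)^{-1}(T)$. From the identity $\Phi_1^A(\xi_A(T)) = T + M_A(T)$ and the tail bounds of Lemma \ref{tbd} applied with $f_A$ in place of $f$, one gets $M_A(T) = o(T)$ in probability, so for any $\delta > 0$, with probability tending to $1$,
\[
(\Phi_1^A)^{-1}((1-\delta)T) \;\le\; \xi_A(T) \;\le\; (\Phi_1^A)^{-1}((1+\delta)T).
\]
Applying $\Phi_2^A$ and using the elementary change-of-variable identity $\Phi_1^A(x) = \Phi_1(x+A) - \Phi_1(A)$, which yields $\K_A(T) = \K(T + \Phi_1(A)) - \Phi_2(A)$, Assumption \eqref{Kcont0} forces $\K_A((1\pm\delta)T)/\K_A(T) \to 1$ as $T\to\infty$ followed by $\delta \downarrow 0$, so $\Phi_2^A(\xi_A(T))/\K_A(T) \xrightarrow{P} 1$. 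Finally, the same continuity \eqref{Kcont0} (applied with the shift $\Phi_1(A) \le \delta T$) combined with $\K(T)\to\infty$ gives $\K_A(T)/\K(T) \to 1$. Setting $T = \K^{-1}(nt)$ and chaining the approximations produces
\[
\frac{1}{n}\langle M_A\rangle(\K^{-1}(nt)) \;=\; \frac{S_2^A(\xi_A(T))}{n} + O(n^{-1}) \;\xrightarrow{P}\; \frac{\K(\K^{-1}(nt))}{n} \;=\; t.
\]

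The main obstacle is the second step—controlling $\Phi_2^A(\xi_A(T))$ when $f$ is not monotone. Under monotonicity (as in \cite{DM}), one directly sandwiches $\Phi_2^A(\xi_A(T))$ between $\Phi_2^A$ evaluated at deterministic indices near $(\Phi_1^A)^{-1}(T)$ and applies a law of large numbers. Without monotonicity this sandwich can fail, and Assumption \eqref{Kcont0} is precisely the substitute: it rules out wild oscillations of $\K_A$ over windows of size $o(T)$, which is all that is needed to transfer the $o(T)$-concentration of $\xi_A(T)$ around $(\Phi_1^A)^{-1}(T)$ into a $(1+o(1))$-concentration of $\Phi_2^A(\xi_A(T))$ around $\K_A(T)$.
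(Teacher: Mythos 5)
Your overall strategy matches the paper's: invoke a martingale FCLT, reduce to showing $\langle M_A\rangle(\K^{-1}(nt))/n \to t$ in probability, decompose the quadratic variation as $S_2^A(\xi_A(T))$ plus a remainder, apply a weak law of large numbers to $S_2^A$, and use the $\K$-continuity hypothesis to transfer the concentration of $\xi_A(T)$ around $(\Phi_1^A)^{-1}(T)$ into $\Phi_2^A(\xi_A(T))/\K_A(T)\to 1$ and hence $\langle M_A\rangle(T)/\K(T)\to 1$. The exact identities $\Phi_1^A(x)=\Phi_1(x+A)-\Phi_1(A)$ and $\K_A(T)=\K(T+\Phi_1(A))-\Phi_2(A)$ you use to compare $\K_A$ with $\K$ are a little slicker than the two-sided $f_*$-bounds the paper employs, but the underlying content is the same.

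There are, however, two genuine gaps. First, the claimed remainder bound $|r(T)|\le 1/f_*(A)$ in $\langle M_A\rangle(T)=S_2^A(\xi_A(T))+r(T)$ is false: the exact remainder is
\[
r(T)=\frac{T-S_1^A(\xi_A(T))}{f_A(\xi_A(T))}\le \frac{\Ey_{\xi_A(T)}}{f_A^2(\xi_A(T))},
\]
and the exponential factor $\Ey_{\xi_A(T)}$ is not bounded by a constant; there is no deterministic bound of the type you assert, so the claimed ``$O(n^{-1})$'' term is unjustified. The paper treats this remainder with a separate in-probability estimate, combining the law of large numbers for $\xi_A(T)$ with Markov's inequality and the $\K$-continuity to show it is $o_P(\K_A(T))$; some argument of this kind is required. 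Second, you cite Lemma \ref{tbd} (applied to $f_A$) to obtain $M_A(T)=o_P(T)$, but Lemma \ref{tbd} is an upper-tail estimate only; to sandwich $\xi_A(T)$ between $(\Phi_1^A)^{-1}((1\pm\delta)T)$ you also need $\mathbb{P}(M_A(T)<-\delta T)\to 0$, i.e.\ a lower tail estimate. The paper obtains the needed two-sided control in one step via Doob's $L^2$-inequality together with Borel--Cantelli. Both gaps are repairable with the ideas already at hand, but as written the proof is incomplete.
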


\begin{proof}
The key idea in the proof is to use a martingale functional central limit theorem \cite[Theorem 2.1 (i)]{whitt2007proofs}, which requires showing that the quadratic variation of the martingale $M_A^{(n)}$ converges to that of Brownian motion in a certain sense, namely \eqref{qvBM}. To do so, we aim at proving \eqref{quadconv}.

First, note that for $i=1,2$, $t \ge 0$, $A \in \mathbb{N}_0$,
$$
\Phi^A_i(t) = \int_0^t \frac{1}{f^i_A(s)}ds = \int_A^{t+A}\frac{1}{f^i(s)}ds = \Phi_i(t) + \int_t^{t+A}\frac{1}{f^i(s)}ds - \int_0^{A}\frac{1}{f^i(s)}ds.
$$
Recall $f_* := \inf_{i \ge 0}f(i)$. We deduce from the above equalities that for $i=1,2$, $t \ge 0$, $A \in \mathbb{N}_0$:
\begin{align*}
\Phi_i(t) - \frac{A}{f_*^i} & \le \Phi^A_i(t) \le \Phi_i(t) + \frac{A}{f_*^i},\\
\Phi^{-1}_i\left(t - \frac{A}{f_*^i}\right) & \le (\Phi^A_i)^{-1}(t) \le \Phi^{-1}_i\left(t + \frac{A}{f_*^i}\right).
\end{align*}
Recall $\K_A(t) := \Phi^A_2 \circ (\Phi^A_1)^{-1}(t), t \ge 0$. From the above relations, it follows that for $t \ge 0$, $A \ge 0$,
\begin{equation*}
\K\left(t - \frac{A}{f_*}\right) - \frac{A}{f_*^2} \le \K_A(t) \le \K\left(t + \frac{A}{f_*}\right) + \frac{A}{f_*^2}.
\end{equation*}
From this and \eqref{Kcont0}, we conclude that for any $A \ge 0$,
\begin{equation}\label{Krat}
\frac{\K_A(t)}{\K(t)} \rightarrow 1  \ \text{ as } \ t \rightarrow \infty.
\end{equation}
In particular, for any $A \in \mathbb{N}_0$,
\begin{equation}\label{KcontA}
\lim_{\delta \downarrow 0} \limsup_{t \rightarrow \infty} \frac{\K_A((1+\delta)t)}{\K_A(t)} = 1.
\end{equation}
Let $N_A(t) := \Phi^A_1(\xi_A(t)), t \ge 0$, and $\bar{f}_A(\cdot) := f_A \circ (\Phi^A_1)^{-1}(\cdot)$. Recall by Lemma \ref{mgle} (with $M_A(\cdot)$ replacing $M(\cdot)$) that
\begin{equation}\label{qvre}
\langle M_A \rangle(t)  = \int_0^t\frac{1}{f_A\left(\xi_A(s-)\right)}ds.
\end{equation}
As $M_A(\cdot)$ is a martingale, by Doob's $L^2$-inequality, for any $i \ge 0$,
\begin{align*}
\frac{\mathbb{E}\left(\sup_{0 \le t \le 2^{i+1}}|M_A(t)|^2\right)}{(2^{i/2}\log 2^i)^2} & \le 4\frac{\mathbb{E}\left(|M_A(2^{i+1})|^2\right)}{(2^{i/2}\log 2^i)^2}\\
&= 4\frac{\mathbb{E}\left(\langle M_A \rangle(2^{i+1})\right)}{(2^{i/2}\log 2^i)^2} \le \frac{4f_*^{-1}2^{i+1}}{2^i i^2(\log 2)^2} = \frac{8}{(\log 2)^2f_*} \frac{1}{i^2},
\end{align*}
where we used \eqref{qvre} for the second inequality.
From this, following the calculation used to derive equation (5) in \cite{DM}, we obtain $i_0 \in \mathbb{N}$ such that for any $i \ge i_0$,
\begin{align*}
\mathbb{P}\left(\sup_{t \ge 2^i}\frac{|M_A(t)|}{\sqrt{t} \log t} \ge 1\right) &\le \sum_{k=i}^{\infty}\mathbb{P}\left(\sup_{2^k \le t \le 2^{k+1}}\frac{|M_A(t)|}{\sqrt{t} \log t} \ge 1\right)\le \sum_{k=i}^{\infty}\mathbb{P}\left(\sup_{0\le t \le 2^{k+1}}|M_A(t)| \ge 2^{\frac{k}{2}} \log 2^k\right)\\
& \le \sum_{k=i}^{\infty} \frac{\mathbb{E}\left(\sup_{0 \le t \le 2^{k+1}}|M_A(t)|^2\right)}{(2^{k/2}\log 2^k)^2} \le \sum_{k=i}^{\infty} \frac{8}{(\log 2)^2f_*} \frac{1}{k^2} < \infty.
\end{align*}
Hence, using the Borel-Cantelli Lemma, we conclude
$$
\limsup_{t \rightarrow \infty} \frac{|M_A(t)|}{\sqrt{t} \log t} \le 1 \  \ \text{almost surely}.
$$
In particular, recalling $N_A(t) = M_A(t) + t, \, t \ge 0$,
\begin{equation}\label{f0}
\frac{N_A(t)}{t} \xrightarrow{a.s} 1  \ \text{ as } \ t \rightarrow \infty.
\end{equation}
Recall $S^A_i(n) := \sum_{k=0}^{n-1}\frac{\mathbf{E}_k}{f_A^i(k)}, n \ge 0, \ i=1,2$, where $\{\mathbf{E}_k\}_{k \ge 0}$ are i.i.d. exponential random variables with mean $1$. Extend $S^A_i(\cdot), i =1,2,$ to $[0,\infty)$ by linear interpolation. By the first equality in \eqref{conc1},
\begin{equation}\label{f1}
\left| \langle M_A \rangle(t) - S^A_2(\xi_A(t))  \right| \le \frac{\mathbf{E}_{\xi_A(t)}}{f_A^2(\xi_A(t))}.
\end{equation}
Now, for $n \in \mathbb{N}_0$,
$$
S^A_2(n) =  \sum_{k=0}^{n-1}\frac{(\mathbf{E}_k - 1)}{f_A^2(k)} + \Phi^A_2(n)
$$
and hence, by Tchebyshev's inequality, for any $\delta>0$,
$$
\mathbb{P}\left(\left|\frac{S^A_2(n)}{\Phi^A_2(n)} - 1\right| > \delta\right) \le \frac{1}{\left(\Phi^A_2(n)\right)^2} \operatorname{Var}\left(\sum_{k=0}^{n-1}\frac{(\mathbf{E}_k - 1)}{f_A^2(k)}\right) = \frac{\Phi^A_4(n)}{\left(\Phi^A_2(n)\right)^2} \le \frac{1}{f_*^2\Phi^A_2(n)} \rightarrow 0
$$
as $n \rightarrow \infty$ as $\Phi^A_2(\infty) = \Phi_2(\infty) = \infty$. Hence,
\begin{equation}\label{f2}
\frac{S^A_2(n)}{\Phi^A_2(n)} \xrightarrow{P} 1 \ \text{ as } \ n \rightarrow \infty.
\end{equation}
For any $\epsilon \in (0,1), t >0$, if $(1-\epsilon)t \le N_A(t) \le (1+\epsilon)t$, then noting $\Phi^A_2(\xi_A(t)) = \K_A(N_A(t))$, we obtain
\begin{align}\label{f3}
\frac{S^A_2(\xi_A(t))}{\K_A(t)} &= \frac{S^A_2(\xi_A(t))}{\Phi^A_2(\xi_A(t))}\frac{\K_A(N_A(t))}{\K_A(t)} \le \frac{S^A_2((\Phi^A_1)^{-1}((1+\epsilon)t))}{\Phi^A_2((\Phi^A_1)^{-1}((1-\epsilon)t))}\frac{\K_A((1+\epsilon)t)}{\K_A(t)}\\
&=  \frac{S^A_2((\Phi^A_1)^{-1}((1+\epsilon)t))}{\Phi^A_2((\Phi^A_1)^{-1}((1+\epsilon)t))} \ \frac{\K_A((1+\epsilon)t)}{\K_A((1-\epsilon)t)} \ \frac{\K_A((1+\epsilon)t)}{\K_A(t)},\nonumber
\end{align}
and similarly,
\begin{equation}\label{f4}
\frac{S^A_2(\xi_A(t))}{\K_A(t)} \ge \frac{S^A_2((\Phi^A_1)^{-1}((1-\epsilon)t))}{\Phi^A_2((\Phi^A_1)^{-1}((1- \epsilon)t))} \ \frac{\K_A((1-\epsilon)t)}{\K_A((1+\epsilon)t)} \ \frac{\K_A((1-\epsilon)t)}{\K_A(t)}.
\end{equation}
Take any $\eta \in (0,1/2)$. By \eqref{KcontA}, we can choose $\epsilon_{\eta}>0$ and $t_{\eta}>0$ such that for all $t \ge t_{\eta}$,
$$
\frac{\K_A^2((1-\epsilon_{\eta})t)}{\K_A((1+\epsilon_{\eta})t)\K_A(t)} > 1- \eta, \ \frac{\K_A^2((1+\epsilon_{\eta})t)}{\K_A((1-\epsilon_{\eta})t)\K_A(t)} < 1+ \eta.
$$
Therefore, by \eqref{f3} and \eqref{f4}, for $t \ge t_{\eta}$, 
\begin{multline*}
\mathbb{P}\left(\left|\frac{S^A_2(\xi_A(t))}{\K_A(t)} - 1\right| > 2\eta \right) \le \mathbb{P}\left(\left| \frac{N_A(t)}{t} - 1\right| > \epsilon_{\eta}\right)\\
 + \mathbb{P}\left( \frac{S^A_2((\Phi^A_1)^{-1}((1+\epsilon)t))}{\Phi^A_2((\Phi^A_1)^{-1}((1+\epsilon)t))} > \frac{1+2\eta}{1+\eta}\right) + \mathbb{P}\left(\frac{S^A_2((\Phi^A_1)^{-1}((1-\epsilon)t))}{\Phi^A_2((\Phi^A_1)^{-1}((1-\epsilon)t))}  < \frac{1-2\eta}{1-\eta}\right).
\end{multline*}
Hence, by \eqref{f0} and \eqref{f2}, we obtain
\begin{equation}\label{f5}
\frac{S^A_2(\xi_A(t))}{\K_A(t)} \xrightarrow{P} 1  \ \text{ as } \ t \rightarrow \infty.
\end{equation}
Moreover, for any $\eta>0$ and any $\epsilon \in (0,1)$,
\begin{align}\label{f6}
\mathbb{P}\left( \frac{\mathbf{E}_{\xi_A(t)}}{f_A^2(\xi_A(t))} > \eta \K_A(t)\right)& \le \mathbb{P}\left(\xi_A(t) \notin \left((\Phi^A_1)^{-1}((1-\epsilon)t + 1),(\Phi^A_1)^{-1}((1+ \epsilon)t) - 1\right)\right)\\
&\quad + \mathbb{P}\left(\sum_{k=\lfloor (\Phi^A_1)^{-1}((1-\epsilon)t)\rfloor + 1}^{\lfloor (\Phi^A_1)^{-1}((1+\epsilon)t) \rfloor} \frac{\mathbf{E}_{k}}{f_A^2(k)}> \eta \K_A(t)\right)\notag\\
& \le \mathbb{P}\left(\xi_A(t) \notin \left((\Phi^A_1)^{-1}((1-\epsilon)t), (\Phi^A_1)^{-1}((1+ \epsilon)t) - 1\right)\right)\notag\\
&\quad + \frac{\K_A((1+\epsilon)t) - \K_A((1-\epsilon)t)}{\eta \K_A(t)}\notag\\
&\rightarrow 0 \ \text{ as } t \rightarrow \infty \notag
\end{align}
where we used Markov's inequality in the second inequality, and the convergence in the last step is a result of \eqref{f0} and \eqref{KcontA}.
Thus, by \eqref{f1}, \eqref{f5} and \eqref{f6}, we conclude that
\begin{equation*}
\frac{\langle M_A \rangle(t)}{\K_A(t)} \xrightarrow{P} 1  \ \text{ as } \ t \rightarrow \infty.
\end{equation*}
This, along with \eqref{Krat}, implies the following asymptotic behavior of the quadratic variation process $\langle M_A \rangle(\cdot)$ for any $A \in \mathbb{N}_0$:
\begin{equation}\label{quadconv}
\frac{\langle M_A \rangle(t)}{\K(t)} \xrightarrow{P} 1  \ \text{ as } \ t \rightarrow \infty.
\end{equation}
Now we apply a martingale functional central limit theorem \cite[Theorem 2.1 (i)]{whitt2007proofs}.  Consider the martingale $M_A^{(n)}(\cdot)$ defined in \eqref{scaledproc}. By \eqref{quadconv}, for each $A \in \mathbb{N}_0$, $t>0$,
\begin{equation}\label{qvBM}
\lim_{n \rightarrow \infty} \langle M_A^{(n)} \rangle(t) \xrightarrow{P} t  \ \text{ as } \ n \rightarrow \infty.
\end{equation}
Moreover, the jumps of the martingale $M_A^{(n)}(\cdot)$ are deterministically bounded by $n^{-1/2}f_*^{-1}$ which converges to $0$ as $n \rightarrow \infty$. Hence, by \cite[Theorem 2.1 (i)]{whitt2007proofs}, for each $A \in \mathbb{N}_0$, $M_A^{(n)}(\cdot)$ converges weakly to standard Brownian motion in $D([0,\infty)  :  \mathbb{R})$ as $n \rightarrow \infty$.
\end{proof}

\subsection{Moderate deviation principles }The following moderate deviation principle (MDP) will be crucial in investigating the index of the maximal degree vertex when $\Phi_2(\infty) = \infty$.
\begin{lemma}\label{mdpexp}
Assume $\Phi_2(\infty) = \infty$. Let $\{b_n\}_{n \ge 1}$ be any positive sequence such that $b_n \rightarrow \infty$ and $\frac{b_n}{\Phi_2(n)} \rightarrow 0$ as $n \rightarrow \infty$. Then for any $x \ge 0$,
\begin{align}\label{M1}
\lim_{n \rightarrow \infty} \frac{b_n}{\Phi_2(n)} \log \mathbb{P}\left(S_1(n) - \Phi_1(n) \ge x \frac{\Phi_2(n)}{\sqrt{b_n}}\right) &= -\frac{x^2}{2},\\\label{M2}
\lim_{n \rightarrow \infty} \frac{b_n}{\Phi_2(n)} \log \mathbb{P}\left(S_1(n) - \Phi_1(n) \le -x \frac{\Phi_2(n)}{\sqrt{b_n}}\right) &= -\frac{x^2}{2}.
\end{align}
If in addition $f(k) \rightarrow \infty$ as $k \rightarrow \infty$, then for any $x \ge 0$,
\begin{equation}\label{M3}
\lim_{n \rightarrow \infty} \frac{1}{\Phi_2(n)} \log \mathbb{P}\left(S_1(n) - \Phi_1(n) \le -x \Phi_2(n)\right) = -\frac{x^2}{2}.
\end{equation}
\end{lemma}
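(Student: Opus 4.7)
Write $T_n := S_1(n) - \Phi_1(n) = \sum_{i=0}^{n-1} (\mathbf{E}_i - 1)/f(i)$, so that $T_n$ is a sum of independent centered random variables with $\operatorname{Var}(T_n) = \Phi_2(n)$. For $|\theta| < f_*$ the cumulant generating function admits the exact expansion
\begin{equation*}
\Lambda_n(\theta) := \log \mathbb{E}[e^{\theta T_n}] = \sum_{i=0}^{n-1}\left[-\frac{\theta}{f(i)} - \log\left(1 - \frac{\theta}{f(i)}\right)\right] = \sum_{k \ge 2} \frac{\theta^k}{k}\,\Phi_k(n),
\end{equation*}
while the elementary bound $\log(1+z) \ge z - z^2/2$ for $z \ge 0$ gives the non-asymptotic inequality $\log \mathbb{E}[e^{-\eta X_i}] \le \eta^2/(2 f^2(i))$ valid for every $\eta \ge 0$, with $X_i := (\mathbf{E}_i-1)/f(i)$. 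All three assertions follow from the Cram\'er--Chernoff recipe: exponential Markov for the upper bound, and exponential tilting followed by a second-moment (CLT) step under the tilted law for the matching lower bound. The regime difference is whether the optimal tilt vanishes (MDP, \eqref{M1}--\eqref{M2}) or is of fixed order (LDP, \eqref{M3}).

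\emph{Moderate deviation regime.} For \eqref{M1}, apply Chernoff at the vanishing tilt $\theta_n := x/\sqrt{b_n}$. Using $\Phi_k(n) \le f_*^{-(k-2)}\Phi_2(n)$, the $k \ge 3$ terms in $\Lambda_n(\theta_n)$ contribute $O(\theta_n^3 \Phi_2(n)/f_*) = o(\Phi_2(n)/b_n)$, so $\Lambda_n(\theta_n) = \tfrac{1}{2}\theta_n^2\Phi_2(n)(1 + o(1)) = \tfrac{x^2 \Phi_2(n)}{2 b_n}(1 + o(1))$ and Chernoff yields $\mathbb{P}(T_n \ge x\Phi_2(n)/\sqrt{b_n}) \le \exp\{-(1+o(1)) x^2 \Phi_2(n)/(2 b_n)\}$. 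For the matching lower bound, pass to the tilted law $dQ_n/d\mathbb{P} := \exp\{\theta_n T_n - \Lambda_n(\theta_n)\}$; direct computation gives $\mu_n := E_{Q_n}[T_n] = \Lambda_n'(\theta_n) = x\Phi_2(n)/\sqrt{b_n} + O(\Phi_2(n)/b_n)$ and $\sigma_n^2 := \operatorname{Var}_{Q_n}(T_n) = \Lambda_n''(\theta_n) = (1+o(1))\Phi_2(n)$. A Lindeberg CLT under $Q_n$ gives $Q_n(T_n \in [\mu_n, \mu_n + \sigma_n]) \to \int_0^1 \phi(z)\,dz > 0$, so inverting the tilt on this event and using $\theta_n \sigma_n = x\sqrt{\Phi_2(n)/b_n} = o(\Phi_2(n)/b_n)$ produces the matching MDP lower bound. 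The argument for \eqref{M2} is identical with tilt $-x/\sqrt{b_n}$.

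\emph{Large deviation regime.} The upper bound in \eqref{M3} is immediate from the elementary inequality: $\log \mathbb{E}[e^{-\eta T_n}] \le \eta^2 \Phi_2(n)/2$ for all $\eta \ge 0$, and Chernoff optimized at $\eta = x$ gives $\mathbb{P}(T_n \le -x\Phi_2(n)) \le e^{-x^2 \Phi_2(n)/2}$. For the lower bound, tilt by $\theta := -x(1+\delta)$ with $\delta > 0$ small and set $\eta := x(1+\delta)$. The hypothesis $f(k) \to \infty$ is used crucially here: given any $\epsilon' > 0$, for all $i \ge N = N(\epsilon')$ one has $\eta/f(i) - \log(1+\eta/f(i)) \ge (1-\epsilon')\eta^2/(2 f^2(i))$ (second-order Taylor, valid because $\eta/f(i)$ is small for $i \ge N$); combined with the matching $\le$ bound from the elementary inequality this yields $\Lambda_n(-\eta) = (1+o(1))\eta^2 \Phi_2(n)/2$, and analogous expansions of $\Lambda_n'$ and $\Lambda_n''$ give $E_Q[T_n] = -(1+o(1))\eta\Phi_2(n)$ and $\operatorname{Var}_Q(T_n) = (1+o(1))\Phi_2(n)$. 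Since $E_Q[T_n]$ lies below the threshold $-x\Phi_2(n)$ by at least $(x\delta/2)\Phi_2(n) \gg \sqrt{\Phi_2(n)}$ for large $n$, Chebyshev gives $Q(T_n \le -x\Phi_2(n)) \to 1$; inverting the tilt on this event produces $\mathbb{P}(T_n \le -x\Phi_2(n)) \ge \exp\{-(1+o(1))\eta^2 \Phi_2(n)/2\}$, and sending $\delta \downarrow 0$ completes the proof.

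\emph{Main obstacle.} The only nontrivial step is the lower bound for \eqref{M3}, where one must establish the Gaussian-type asymptotic $\Lambda_n(-\eta) \sim \eta^2 \Phi_2(n)/2$ at a \emph{fixed} tilt $\eta > 0$. At fixed $\eta$ the higher cumulants $\eta^k \Phi_k(n)/k$ ($k \ge 3$) are a priori not negligible against $\eta^2 \Phi_2(n)/2$, and it is precisely the assumption $f(k) \to \infty$ that forces $\Phi_k(n)/\Phi_2(n) \to 0$ for $k \ge 3$ and so makes the Taylor remainders small on the bulk of the sum. This also explains why the MDP statements \eqref{M1}--\eqref{M2} require no additional hypothesis beyond $\Phi_2(\infty) = \infty$: in that regime the vanishing tilt $\theta_n \to 0$ kills the higher cumulants automatically.
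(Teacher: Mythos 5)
Your proposal is correct in substance and takes a genuinely different route from the paper. The paper computes the same cumulant generating function estimate, namely the scaled convergence $\frac{b_n}{\Phi_2(n)}\Lambda_n(\frac{\Phi_2(n)}{b_n}\lambda) \to \lambda^2/2$ (and its analogue with $b_n \equiv 1$), and then invokes the G\"artner--Ellis theorem as a black box for both the MDP and the LDP. For \eqref{M3} this forces the paper to handle a subtlety: $\Lambda(\lambda)$ is finite only on $(-\infty, f_*)$ and is not essentially smooth there, so the paper must identify the exposed points of the Fenchel--Legendre transform $\Lambda^*$ and check they suffice for the lower bound. Your direct Cram\'er--Chernoff argument (Chernoff upper bound, exponential tilting with a CLT or Chebyshev step under the tilted law for the lower bound) sidesteps that issue entirely because the optimal tilt for the left tail lies in the open half-line $\theta < 0$, where $\Lambda_n$ is everywhere finite; you never touch the boundary $\lambda = f_*$. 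What you pay for this is that you must verify the Lindeberg condition for the triangular array of tilted summands in the MDP case (straightforward, since $f \ge f_* > 0$ gives uniformly bounded moments, but an extra step that G\"artner--Ellis absorbs for free). Both proofs exploit $f(k)\to\infty$ identically: it forces $\Phi_3(n)/\Phi_2(n)\to 0$, which is exactly what controls the third- and higher-order cumulants at a fixed tilt in \eqref{M3}.

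One small imprecision worth fixing: in the \eqref{M3} lower bound you invert the tilt on the one-sided event $\{T_n \le -x\Phi_2(n)\}$. On that event $e^{\eta T_n}$ is not bounded below, so the inversion does not directly give the claimed rate. The standard fix is to restrict to the bounded box $B_n := \{T_n \in [-(\eta + a)\Phi_2(n),\, -x\Phi_2(n)]\}$ for fixed $a>0$, where $e^{\eta T_n} \ge e^{-\eta(\eta+a)\Phi_2(n)}$; Chebyshev under $Q$ still gives $Q(B_n)\to 1$ (the left tail of $T_n$ under $Q$ also vanishes), and one concludes $\liminf \frac{1}{\Phi_2(n)}\log\mathbb{P}(T_n \le -x\Phi_2(n)) \ge -\eta^2/2 - \eta a$, then sends $a\downarrow 0$ and $\delta\downarrow 0$. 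The rate $\exp\{-(1+o(1))\eta^2\Phi_2(n)/2\}$ that you state only emerges after this $a\to 0$ limit; for fixed $a$ the exponent carries the extra $-\eta a\Phi_2(n)$.
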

\begin{proof}
Define
$$
Z_n := \frac{\sqrt{b_n}}{\Phi_2(n)}(S_1(n) - \Phi_1(n))
$$
and for any $\lambda \in \mathbb{R}$,
$$
\Lambda_n(\lambda) := \log \mathbb{E}\left(\exp\left\lbrace \lambda Z_n \right\rbrace \right).
$$
Then, for any $\lambda \in \mathbb{R}$, using the explicit moment generating function of exponential random variables, we obtain for any $n \ge 1$ such that $|\lambda| < \sqrt{b_n} f_*$,
\begin{multline}\label{test0}
\Lambda_n\left(\frac{\Phi_2(n)}{b_n}\lambda\right) = \log \mathbb{E}\left(\exp\left\lbrace \frac{\lambda}{\sqrt{b_n}}S_1(n) \right\rbrace \right) - \frac{\lambda \Phi_1(n)}{\sqrt{b_n}} = - \frac{\lambda \Phi_1(n)}{\sqrt{b_n}} -\sum_{i=0}^{n-1}\log \left(1 - \frac{\lambda}{\sqrt{b_n}f(i)}\right).
\end{multline}
Using the inequalities $-z - z^2/2 + \frac{|z|^3}{1-|z|} \ge \log(1- z) \ge -z - z^2/2 - \frac{|z|^3}{1-|z|}$ for $|z| < 1$ in the above expression gives for any $n \ge 1$ such that $|\lambda| < \sqrt{b_n} f_*$,
\begin{align}\label{test1}
\Lambda_n\left(\frac{\Phi_2(n)}{b_n}\lambda\right) &\le  - \frac{\lambda \Phi_1(n)}{\sqrt{b_n}} + \sum_{i=0}^{n-1}\left[\frac{\lambda}{\sqrt{b_n}f(i)} + \frac{\lambda^2}{2b_nf^2(i)} + \frac{|\lambda|^3}{b_nf^2(i)(\sqrt{b_n} f(i) - |\lambda|)}\right]\\
& \le \frac{\lambda^2\Phi_2(n)}{2b_n} + \frac{|\lambda|^3 \Phi_2(n)}{b_n(\sqrt{b_n} f_* - |\lambda|)}\notag
\end{align}
and
\begin{align}\label{test2}
\Lambda_n\left(\frac{\Phi_2(n)}{b_n}\lambda\right) &\ge - \frac{\lambda \Phi_1(n)}{\sqrt{b_n}} + \sum_{i=0}^{n-1}\left[\frac{\lambda}{\sqrt{b_n}f(i)} + \frac{\lambda^2}{2b_nf^2(i)} - \frac{|\lambda|^3}{b_nf^2(i)(\sqrt{b_n} f(i) - |\lambda|)}\right]\\
& \ge \frac{\lambda^2\Phi_2(n)}{2b_n} - \frac{|\lambda|^3 \Phi_2(n)}{b_n(\sqrt{b_n} f_* - |\lambda|)}.\notag
\end{align}
From the above bounds, we obtain for any fixed $\lambda \in \mathbb{R}$, taking $n \ge 1$ large enough such that  $|\lambda| < \sqrt{b_n} f_*$,
\begin{equation}\label{mdpexp1}
\left|\frac{b_n}{\Phi_2(n)}\Lambda_n\left(\frac{\Phi_2(n)}{b_n}\lambda\right) - \frac{\lambda^2}{2}\right| \le \frac{|\lambda|^3 }{(\sqrt{b_n} f_* - |\lambda|)}  \rightarrow 0
\end{equation}
as $n \rightarrow \infty$. \eqref{M1} and \eqref{M2} now follow by the G\"artner-Ellis Theorem \cite[Section 2.3, Theorem 2.3.6 (c)]{dembo2011large}.

To prove \eqref{M3}, we use the same notation as above with $b_n \equiv 1$. By the same calculation as in \eqref{test1} and \eqref{test2} with $b_n = 1$, we get for any $\lambda \in (-\infty, f_*)$,
\begin{align*}
\left|\frac{1}{\Phi_2(n)}\Lambda_n\left(\Phi_2(n)\lambda\right) - \frac{\lambda^2}{2}\right| &\le \frac{1}{\Phi_2(n)}\sum_{i=0}^{n-1} \frac{|\lambda|^3}{f^2(i)(f(i) - |\lambda|)}\\
&\le \frac{1}{\Phi_2(n)}\sum_{i=0}^{n-1} \frac{|\lambda|^3}{f^3(i)(1 - |\lambda|f_*^{-1})}   = \frac{|\lambda|^3 f_* \Phi_3(n)}{(f_* - |\lambda|)\Phi_2(n)}.
\end{align*}
Since $f(k) \rightarrow \infty$ as $k \rightarrow \infty$, for any $\epsilon >0$, there exists $k_{\epsilon} \in \mathbb{N}$ such that $f(k) \ge 1/\epsilon$ for all $k \ge k_{\epsilon}$. Therefore, for any $n > k_{\epsilon}$,
\begin{equation}\label{re}
\Phi_3(n) \le \sum_{k=0}^{k_{\epsilon} - 1}\frac{1}{f^3(k)} + \epsilon \sum_{k=k_{\epsilon}}^{n - 1}\frac{1}{f^2(k)} \le \sum_{k=0}^{k_{\epsilon} - 1}\frac{1}{f^3(k)} + \epsilon \Phi_2(n).
\end{equation}
As $\Phi_2(\infty) = \infty$,
$
\limsup_{n \rightarrow \infty} \frac{\Phi_3(n)}{\Phi_2(n)} \le \epsilon,
$
and as $\epsilon>0$ is arbitrary, for any $\lambda \in (-\infty, f_*)$,
\begin{equation*}
\lim_{n \rightarrow \infty} \frac{1}{\Phi_2(n)}\Lambda_n\left(\Phi_2(n)\lambda\right) = \frac{\lambda^2}{2}.
\end{equation*}
Further, since  $f(k) \rightarrow \infty$ as $k \rightarrow \infty$, there exists $n_* \in \mathbb{N}_0$ such that $f(n_*) = f_*$. Hence, for any $n \ge n_* + 1$, $\lambda \ge f_*$,
$$
\mathbb{E}\left(\exp\left\lbrace \lambda S_1(n)\right\rbrace\right) \ge \mathbb{E}\left(\exp\left\lbrace \lambda \mathbf{E}_{n_*}/f(n_*)\right\rbrace\right) = \infty.
$$
Thus, by the first equality in \eqref{test0} (with $b_n =1$), for $\lambda \ge f_*$, $\lim_{n \rightarrow \infty} \frac{1}{\Phi_2(n)}\Lambda_n\left(\Phi_2(n)\lambda\right) = \infty$. Following the notation of \cite[Section 2.3]{dembo2011large}, 
$$
\Lambda(\lambda) := \lim_{n \rightarrow \infty} \frac{1}{\Phi_2(n)}\Lambda_n\left(\Phi_2(n)\lambda\right)
$$
exists in $[-\infty, \infty]$ for all $\lambda \in \mathbb{R}$ and $\mathcal{D}_{\Lambda} := \{\lambda \in \mathbb{R} : \Lambda(\lambda) < \infty\} = (-\infty, f_*)$. The Fenchel-Legendre transform $\Lambda^*(\cdot)$ of $\Lambda(\cdot)$ is given by
$$
\Lambda^*(x) =
\left\{
	\begin{array}{ll}
		 \frac{x^2}{2}  & \mbox{if }  x  < f_*,\\
		f_*x - \frac{f_*^2}{2} & \mbox{if } x \ge f_*.
	\end{array}
\right.
$$
Therefore, $\mathcal{D}_{\Lambda^*} := \{x \in \mathbb{R} : \Lambda^*(x) < \infty\} = \mathbb{R}$. Moreover, for any $y \in \mathcal{D}_{\Lambda} \subset \mathcal{D}_{\Lambda^*}$, it can be checked that $y$ is an exposed point with exposing hyperplane $\lambda = y \in \mathcal{D}^0_{\Lambda}$ (in the sense of Definition 2.3.3 of \cite[Section 2.3]{dembo2011large}). Therefore, if $E$ denotes the set of exposed points of $\Lambda^*(\cdot)$ whose exposing hyperplane belongs to $ \mathcal{D}^0_{\Lambda}$, then $\mathcal{D}_{\Lambda} \subseteq E$. Therefore, by the  G\"artner-Ellis Theorem, for any $x \ge 0$,
$$
\limsup_{n \rightarrow \infty} \frac{1}{\Phi_2(n)} \log \mathbb{P}\left(\Phi_2(n)^{-1}(S_1(n) - \Phi_1(n)) \in (-\infty,-x] \right) \le -\inf_{z \in (-\infty,-x]} \Lambda^*(z) = -\frac{x^2}{2}
$$
and
\begin{multline*}
\liminf_{n \rightarrow \infty} \frac{1}{\Phi_2(n)} \log \mathbb{P}\left(\Phi_2(n)^{-1}(S_1(n) - \Phi_1(n)) \in (-\infty,-x] \right)\\
\ge \liminf_{n \rightarrow \infty} \frac{1}{\Phi_2(n)} \log \mathbb{P}\left(\Phi_2(n)^{-1}(S_1(n) - \Phi_1(n)) \in (-\infty,-x) \right) \ge -\inf_{z \in (-\infty,-x) \cap E} \Lambda^*(z)\\
\ge - \inf_{z \in (-\infty,-x) \cap \mathcal{D}_{\Lambda}} \Lambda^*(z) =  - \inf_{z \in (-\infty,-x)} \Lambda^*(z) = -\frac{x^2}{2}.
\end{multline*}
From the above two calculations, we conclude that for any $x \ge 0$,
$$
\lim_{n \rightarrow \infty} \frac{1}{\Phi_2(n)} \log \mathbb{P}\left(\Phi_2(n)^{-1}(S_1(n) - \Phi_1(n)) \in (-\infty,-x] \right) = -\frac{x^2}{2}
$$
proving \eqref{M3}.
\end{proof}

Recall $N(t) = \Phi_1(\xi(t)), t \ge 0$. The next lemma uses the above MDP (in particular, \eqref{M3}) to establish an MDP for the process $N(\cdot)$.

\begin{lemma}\label{MDPN}
Assume $\Phi_2(\infty) =\infty$ and $f(k) \rightarrow \infty$ as $k \rightarrow \infty$. Then, for any $x \ge 0$,
$$
\lim_{t \rightarrow \infty} \frac{1}{\K(t)} \log \mathbb{P}\left(N(t - x\K(t)) \ge t\right) = -\frac{x^2}{2}.
$$
\end{lemma}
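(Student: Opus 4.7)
The plan is to rewrite the event $\{N(t - x\K(t)) \ge t\}$ as a moderate deviation event for the partial sum $S_1$, and then invoke \eqref{M3} of Lemma \ref{mdpexp}. For each $t>0$ set $n_t := \lceil \Phi_1^{-1}(t) \rceil$. Since $\xi$ is integer-valued and $S_1$ is strictly increasing, we have
$$
\{N(s) \ge t\} = \{\Phi_1(\xi(s)) \ge t\} = \{\xi(s) \ge n_t\} = \{S_1(n_t) \le s\}
$$
for every $s \ge 0$. Setting $s = t - x\K(t)$ and writing $a_t := \Phi_1(n_t) - t$, this yields
$$
\mathbb{P}\big(N(t - x\K(t)) \ge t\big) = \mathbb{P}\big(S_1(n_t) - \Phi_1(n_t) \le -x\K(t) - a_t\big).
$$

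The next step is to control the rounding corrections. From the piecewise-linear definitions of $\Phi_1, \Phi_2$, we have $a_t \in [0, 1/f(n_t - 1)]$ and $b_t := \Phi_2(n_t) - \K(t) \in [0, 1/f^2(n_t - 1)]$. Because $f(k) \to \infty$, both $a_t \to 0$ and $b_t \to 0$, while $\Phi_2(\infty) = \infty$ and $n_t \to \infty$ force $\Phi_2(n_t) \to \infty$. Consequently $\K(t)/\Phi_2(n_t) \to 1$ and $a_t/\Phi_2(n_t) \to 0$. Fixing $x > 0$ and $\epsilon \in (0, x)$, we can pick $t_\epsilon$ so that, for all $t \ge t_\epsilon$,
$$
(x - \epsilon)\Phi_2(n_t) \le x\K(t) + a_t \le (x + \epsilon)\Phi_2(n_t).
$$
Monotonicity in $y$ of the event $\{S_1(n_t) - \Phi_1(n_t) \le -y\}$ then gives
$$
\mathbb{P}\big(S_1(n_t) - \Phi_1(n_t) \le -(x+\epsilon)\Phi_2(n_t)\big) \le \mathbb{P}\big(N(t - x\K(t)) \ge t\big) \le \mathbb{P}\big(S_1(n_t) - \Phi_1(n_t) \le -(x - \epsilon)\Phi_2(n_t)\big).
$$

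Taking logarithms, dividing by $\K(t)$, and combining $\K(t)/\Phi_2(n_t) \to 1$ with \eqref{M3} of Lemma \ref{mdpexp} (whose standing hypotheses $\Phi_2(\infty) = \infty$ and $f(k) \to \infty$ are in force) applied along $n_t \to \infty$, we obtain
$$
-\frac{(x + \epsilon)^2}{2} \le \liminf_{t \to \infty} \frac{\log \mathbb{P}\big(N(t - x\K(t)) \ge t\big)}{\K(t)} \le \limsup_{t \to \infty} \frac{\log \mathbb{P}\big(N(t - x\K(t)) \ge t\big)}{\K(t)} \le -\frac{(x - \epsilon)^2}{2},
$$
and letting $\epsilon \downarrow 0$ proves the claim for $x > 0$. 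For $x = 0$, the probability in question converges to $1/2$ by a classical CLT applied to $(S_1(n_t) - \Phi_1(n_t))/\sqrt{\Phi_2(n_t)}$ (noting that $a_t/\sqrt{\Phi_2(n_t)} \le 1/(f_* \sqrt{\Phi_2(n_t)}) \to 0$), so its logarithm is bounded while $\K(t) \to \infty$, giving the limit $0 = -0^2/2$.

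There is no serious obstacle beyond bookkeeping: the key point is that the integer-rounding corrections $a_t$ and $b_t$ are of order $O(1/f_*)$ whereas the scale $\Phi_2(n_t)$ diverges, which is precisely what the combination of $f(k) \to \infty$ and $\Phi_2(\infty) = \infty$ provides.
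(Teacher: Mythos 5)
Your proof is correct and takes essentially the same route as the paper's: both arguments rewrite the event $\{N(t-x\K(t))\ge t\}$ as a lower-tail event for the sum $S_1(\cdot)$ at an integer near $\Phi_1^{-1}(t)$, control the rounding corrections via $\Phi_2(n_t)\to\infty$ (together with $f(k)\to\infty$), and then invoke the moderate-deviation limit \eqref{M3}. Your version is slightly more streamlined — you absorb the rounding corrections $a_t,b_t$ directly into the threshold and apply \eqref{M3} once along $n\to\infty$, whereas the paper first proves the lattice-point statement \eqref{N1} and then squeezes a general $t$ between $\Phi_1(n_t)$ and $\Phi_1(n_t+1)$ with separate upper- and lower-bound sandwiches — but the underlying ideas coincide.
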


\begin{proof}
For any $n \in \mathbb{N}$, $x \ge 0$,
\begin{align*}
\mathbb{P}\left(N(\Phi_1(n) - x\Phi_2(n)) \ge \Phi_1(n)\right) &= \mathbb{P}\left(\xi(\Phi_1(n) - x\Phi_2(n)) \ge n\right)\\
&= \mathbb{P}\left(S_1(n) \le \Phi_1(n) - x\Phi_2(n)\right)
\end{align*}
Thus, by \eqref{M3}, for any $x \ge 0$,
\begin{equation}\label{N1}
\lim_{n \rightarrow \infty} \frac{1}{\Phi_2(n)} \log \mathbb{P}\left(N(\Phi_1(n) - x\Phi_2(n)) \ge \Phi_1(n)\right) = -\frac{x^2}{2}.
\end{equation}
For any $t \ge 0$, let $n_t$ be the unique non-negative integer such that $t \in [\Phi_1(n_t), \Phi_1(n_t + 1))$. As $\Phi_2(\infty) = \infty$, for any $\epsilon>0$ there exists $t_{\epsilon}>0$ such that for all $t \ge t_{\epsilon}$, $\Phi_1(n_t) + \epsilon \Phi_2(n_t) > t$.
Thus, for any $x > 0$, $t \ge t_{\epsilon}$,
\begin{multline*}
\mathbb{P}\left(N(t - x\K(t)) \ge t\right) \le \mathbb{P}\left(N(t - x\Phi_2(n_t)) \ge t\right)\\
 = \mathbb{P}\left(N(t - \epsilon\Phi_2(n_t) - (x-\epsilon)\Phi_2(n_t)) \ge t\right) \le \mathbb{P}\left(N(\Phi_1(n_t) - (x-\epsilon)\Phi_2(n_t)) \ge t\right).
\end{multline*}
Also, note that
$$
\limsup_{t \rightarrow \infty} \frac{\Phi_2(n_t)}{\K(t)} =  \limsup_{t \rightarrow \infty} \frac{\Phi_2(n_t)}{\Phi_2(\Phi_1^{-1}(t))} \le 1.
$$
Hence, if $x>0$ and $\epsilon \in (0,x)$, then using the above bounds along with \eqref{N1},
\begin{multline*}
\limsup_{t \rightarrow \infty} \frac{1}{\K(t)} \log \mathbb{P}\left(N(t - x\K(t)) \ge t\right)\\
\le \limsup_{t \rightarrow \infty} \frac{1}{\Phi_2(n_t)} \log \mathbb{P}\left(N(\Phi_1(n_t) - (x-\epsilon)\Phi_2(n_t)) \ge t\right) = - \frac{(x-\epsilon)^2}{2}.
\end{multline*}
As $\epsilon>0$ is arbitrary, we conclude that for any $x \ge 0$ (the $x=0$ case being trivially true),
\begin{equation}\label{N2}
\limsup_{t \rightarrow \infty} \frac{1}{\K(t)} \log \mathbb{P}\left(N(t - x\K(t)) \ge t\right) \le -\frac{x^2}{2}.
\end{equation}
Similarly, for any $\epsilon>0$, there exists $t_{\epsilon}' >0$ such that for all $t \ge t_{\epsilon}' $, $t + \epsilon\Phi_2(n_t + 1)> \Phi_1(n_t + 1)$. Thus, for $x \ge 0$, $t \ge t_{\epsilon}'$,
\begin{align*}
\mathbb{P}\left(N(t - x\K(t)) \ge t\right) &\ge \mathbb{P}\left(N(t - x\Phi_2(n_t+1)) \ge t\right)\\
 &= \mathbb{P}\left(N(t + \epsilon\Phi_2(n_t + 1) - (x+\epsilon)\Phi_2(n_t+1)) \ge t\right)\\
 &\ge \mathbb{P}\left(N(\Phi_1(n_t + 1) - (x+\epsilon)\Phi_2(n_t+1)) \ge t\right).
\end{align*}
Also, note that
$$
\liminf_{t \rightarrow \infty} \frac{\Phi_2(n_t+1)}{\K(t)} = \liminf_{t \rightarrow \infty} \frac{\Phi_2(n_t+1)}{\Phi_2(\Phi_1^{-1}(t))} \ge 1.
$$
Hence, using \eqref{N1},
\begin{multline*}
\liminf_{t \rightarrow \infty} \frac{1}{\K(t)} \log \mathbb{P}\left(N(t - x\K(t)) \ge t\right)\\
\ge \liminf_{t \rightarrow \infty} \frac{1}{\Phi_2(n_t+1)} \log  \mathbb{P}\left(N(\Phi_1(n_t + 1) - (x+\epsilon)\Phi_2(n_t+1)) \ge t\right) = - \frac{(x + \epsilon)^2}{2}.
\end{multline*}
Again, as $\epsilon>0$ is arbitrary, we conclude that for any $x \ge 0$,
\begin{equation}\label{N3}
\liminf_{t \rightarrow \infty} \frac{1}{\K(t)} \log \mathbb{P}\left(N(t - x\K(t)) \ge t\right) \ge -\frac{x^2}{2}.
\end{equation}
The lemma follows from \eqref{N2} and \eqref{N3}.
\end{proof}

\subsection{Controlling maximum degrees in CTBP }Recall the embedding of $\{\G_n\}_{n \ge 0}$ in a continuous time branching process (CTBP) when $m_i =1$ for all $i \ge 1$ (see Section \ref{ctbp}) and recall the Malthusian rate $\lambda_*$ defined in Section \ref{sec:not}. For the rest of this section, we will phrase our results in terms of this continuous time embedding. They will be used to address the original discrete time model in Section \ref{sec:main-proofs}. 

For any $0 \le a < b$, denote by $n[a,b]$ the number of individuals born during the time interval $[a,b]$. 
The next lemma gives control in probability on the number of individuals born in the time interval $[at,bt]$ for $0 \le a < b$, $t \ge 0$.

\begin{lemma}\label{birthcontrol}
Assume that the attachment function $f$ satisfies Assumption \eqref{eqn:prop-under-lamb} and $f(k) \rightarrow \infty$ as $k \rightarrow \infty$. Then
\begin{equation}\label{bca}
\lim_{A \rightarrow \infty} \mathbb{P}\left(\sup_{t \ge 0}e^{-\lambda_* t}n[0, t] \le A\right) = 1,
\end{equation}
and for any $0 \le a < b$ and any $\delta \in (0, \lambda_*(b-a)/(b+a))$,
\begin{equation}\label{bcb}
\lim_{t \rightarrow \infty} \mathbb{P}\left(n[at, bt] \le e^{(\lambda_* - \delta)bt} \right) = 0.
\end{equation}
\end{lemma}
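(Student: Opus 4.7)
The plan rests on Nerman's convergence theorem for supercritical CMJ branching processes (see \citet{nerman1981convergence,jagers-ctbp-book}). Under Assumption \eqref{eqn:prop-under-lamb} we have $\lambda_* > \underline{\lambda}$, so $\hat{\rho}$ is finite (in fact analytic) in a neighborhood of $\lambda_*$; in particular $-\hat{\rho}'(\lambda_*) \in (0,\infty)$, which is exactly the integrability required to apply Nerman's theorem. The theorem then yields
\[
e^{-\lambda_* t} |\BP(t)| \xrightarrow{\,a.s.\,} c_* W_\infty
\]
for a deterministic constant $c_* > 0$ and a random variable $W_\infty$ arising as the a.s. limit of the fundamental Nerman martingale. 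Since $\xi(\infty) = \infty$ a.s., every individual produces infinitely many offspring almost surely, so the branching process survives with probability one; together with the $X\log X$-type integrability supplied by the analyticity of $\hat{\rho}$ at $\lambda_*$, this gives $W_\infty > 0$ almost surely.

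For \eqref{bca}: Note $n[0,t] = |\BP(t)|-1$. Since $e^{-\lambda_* t}|\BP(t)|$ converges almost surely to the finite limit $c_* W_\infty$, the sample path $t \mapsto e^{-\lambda_* t}|\BP(t)|$ is a.s. bounded on $[0,\infty)$ (the process is right-continuous with upward jumps at birth times and exponentially decaying between them, and its limit is finite a.s.). Hence the supremum $\sup_{t \ge 0} e^{-\lambda_* t} n[0,t]$ is a.s. finite, and \eqref{bca} follows immediately by letting $A \to \infty$.

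For \eqref{bcb}: Decompose
\[
e^{-\lambda_* bt}\, n[at,bt] = e^{-\lambda_* bt}|\BP(bt)| - e^{-\lambda_*(b-a)t}\bigl(e^{-\lambda_* at}|\BP(at)|\bigr).
\]
By Nerman's theorem the first summand converges a.s. to $c_* W_\infty$, while the second converges a.s. to $0$ (since $e^{-\lambda_*(b-a)t} \to 0$ and $e^{-\lambda_* at}|\BP(at)| \to c_* W_\infty$). Thus $e^{-\lambda_* bt}\, n[at,bt] \to c_* W_\infty > 0$ a.s. Given $\epsilon > 0$, choose $\eta > 0$ with $\mathbb{P}(c_* W_\infty \ge \eta) \ge 1-\epsilon$; on this event, eventually $n[at,bt] \ge (\eta/2) e^{\lambda_* bt}$, which dominates $e^{(\lambda_* - \delta) bt}$ for any fixed $\delta > 0$. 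Sending $\epsilon \downarrow 0$ gives \eqref{bcb}.

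The main technical step is the invocation of Nerman's theorem and the non-degeneracy $W_\infty > 0$ a.s.; everything else reduces to elementary manipulation of the two a.s. limits. Verifying the $X\log X$ condition at $\lambda_*$ is the delicate point, but follows from the strict inequality $\underline{\lambda} < \lambda_*$ in \eqref{eqn:prop-under-lamb}, which ensures $\hat{\rho}$ is finite slightly to the left of $\lambda_*$. I note in passing that the hypothesis $\delta \in (0, \lambda_*(b-a)/(b+a))$ is strictly stronger than what the above a.s. argument actually uses (any $\delta \in (0,\lambda_*)$ would suffice for \eqref{bcb}), but the result as stated is covered.
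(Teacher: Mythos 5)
The skeleton of your argument matches the paper's: apply Nerman's theorem to get $e^{-\lambda_* t}|\BP(t)| \to W$ a.s., establish $W > 0$ a.s., and read off both conclusions. Your derivations of \eqref{bca} and \eqref{bcb} from the a.s.\ convergence are in fact cleaner than the paper's: for \eqref{bca} you simply observe that a c\`adl\`ag process converging a.s.\ to a finite limit has a.s.\ finite supremum (the paper instead splits the time axis at $\log A/(2\lambda_*)$, uses a Markov bound on the finite piece via Nerman's Proposition 1.1 giving $\sup_t\mathbb{E}(e^{-\lambda_*t}|\BP(t)|)<\infty$, and the a.s.\ convergence on the tail); for \eqref{bcb} your single decomposition avoids the paper's two-term bound, and you correctly observe that any $\delta\in(0,\lambda_*)$ would do, whereas the paper's method of separately controlling $n[0,at]$ genuinely requires $\delta<\lambda_*(b-a)/(b+a)$.

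However, there is a real gap in your verification of the $X\log X$ condition, precisely at the step you yourself call ``the delicate point.'' The strict inequality $\underline{\lambda}<\lambda_*$ ensures $\hat{\rho}$ is finite (hence analytic) in a neighborhood of $\lambda_*$, and this suffices for Nerman's regularity Condition 5.1 and for $\hat{\rho}'(\lambda_*)\in(-\infty,0)$ --- that much is fine. But it does \emph{not} imply $\mathbb{E}\bigl(\hat{\xi}(\lambda_*)\log^+\hat{\xi}(\lambda_*)\bigr)<\infty$, which is an independent hypothesis in general CMJ theory. Counterexample: take $\xi=N\,\delta_T$ with $T>0$ deterministic and $N$ an $\mathbb{N}$-valued random variable with $\mathbb{E}(N)>1$ but $\mathbb{E}(N\log N)=\infty$; then $\hat{\rho}(\lambda)=\mathbb{E}(N)e^{-\lambda T}$ is entire, $\underline{\lambda}=-\infty$, and $\hat{\xi}(\lambda_*)=N/\mathbb{E}(N)$, so the $X\log X$ condition fails even though $\hat{\rho}$ is analytic everywhere. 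The paper instead invokes Lemma 1 of \citet{rudas2007random}, a model-specific result exploiting the structure of the offspring point process (independent exponential spacings with rates $f(i)$) together with Assumption \eqref{eqn:prop-under-lamb} \emph{and} the hypothesis $f(k)\to\infty$. Notice that your proposal never uses $f(k)\to\infty$ at all, which is itself a warning sign that a step was elided. Citing or reproducing that lemma closes the gap, after which the rest of your argument goes through and is, if anything, tighter than the paper's.
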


\begin{proof}
Note that, by Assumption \eqref{eqn:prop-under-lamb}, there exists $\beta < \lambda_*$ such that 
$$
\hat{\rho}(\beta) = \int_0^{\infty} e^{-\beta t} \mu(dt) < \infty,
$$
and hence, equation (5.4) of \cite{nerman1981convergence} is satisfied with the function $g$ there chosen to be $g(t) = e^{-(\lambda_* - \beta)t}$. Hence, the point process $\xi$ satisfies Condition 5.1 of \cite{nerman1981convergence}. Moreover, choosing the characteristic $\phi(t) = \mathbb{I}(t \ge 0), t \ge 0$, Condition 5.2 of \cite{nerman1981convergence} is trivially satisfied. Therefore, by Theorem 5.4 of \cite{nerman1981convergence}, we have that
$
e^{-\lambda_*t} |\BP(t)|
$
converges almost surely to a finite random variable $W$. For $\lambda \ge 0$, write $\hat{\xi}(\lambda) := \int_0^{\infty}e^{-\lambda s}\xi(ds)$. Since $f$ satisfies Assumption \eqref{eqn:prop-under-lamb} and $f(k) \rightarrow \infty$ as $k \rightarrow \infty$, by Lemma 1 of \cite{rudas2007random},
\begin{equation}\label{laplace}
\mathbb{E}\left(\hat{\xi}(\lambda_*) \log^+ \hat{\xi}(\lambda_*) \right) < \infty.
\end{equation}
Using this, and noting that $|\BP(t)| \rightarrow \infty $ almost surely as $t \rightarrow \infty$, Propositions 1.1 and 2.2 of \cite{nerman1981convergence} imply that $m := \sup_{t \ge 0} \mathbb{E}(e^{-\lambda_*t} |\BP(t)|) < \infty$ and $W>0$ almost surely. Therefore, for $A >0$,
\begin{align}\label{bc1}
\mathbb{P}\left(\sup_{t \in [0, \log A/(2\lambda_*)]}e^{-\lambda_* t}n[0, t] > A\right) &\le \mathbb{P}\left(n[0, \log A/(2\lambda_*)] > A\right)\\
& \le \frac{\mathbb{E}\left(\BP(\log A/(2\lambda_*))\right)}{A} \le m A^{-1/2}.\nonumber
\end{align}
Fix any $\epsilon>0$. By the almost sure convergence, there exists $A_0 > 1$ such that for all $A \ge A_0$,
$$
\mathbb{P}\left(\sup_{t \ge \log A/(2\lambda_*)}|e^{-\lambda_* t}n[0, t] - W| > 1\right) < \epsilon.
$$
and
$
\mathbb{P}(W > A - 1) < \epsilon.
$
Therefore, for all $A \ge A_0$,
\begin{equation}\label{bc2}
\mathbb{P}\left(\sup_{t \ge \log A/(2\lambda_*)}e^{-\lambda_* t}n[0, t] > A\right) \le \mathbb{P}\left(\sup_{t \ge \log A/(2\lambda_*)}e^{-\lambda_* t}n[0, t] - W > 1\right) + \mathbb{P}(W > A - 1) < 2\epsilon.
\end{equation}
As $\epsilon>0$ is arbitrary, \eqref{bca} follows from \eqref{bc1} and \eqref{bc2}.

 Note that for any $\delta >0$ and $0 \le a < b$,
 $$
 \mathbb{P}\left(n[at, bt] \le e^{(\lambda_* - \delta)bt} \right) \le  \mathbb{P}\left(n[0, at] \ge e^{(\lambda_* - \delta)bt} \right) +  \mathbb{P}\left(n[0, bt] \le 2e^{(\lambda_* - \delta)bt} \right).
 $$
 Taking $t$ sufficiently large that $e^{\delta bt/2} >2$, we obtain
 $$
 \mathbb{P}\left(n[0, bt] \le 2e^{(\lambda_* - \delta)bt} \right) \le \mathbb{P}\left(n[0, bt] \le e^{(\lambda_* - \delta/2)bt} \right) = \mathbb{P}\left(e^{-\lambda_* bt}n[0, bt] \le e^{- \delta bt/2} \right) \rightarrow 0
 $$
as $t \rightarrow \infty$, because $e^{-\lambda_* bt}n[0, bt]$ converges almost surely to $W$ and $W>0$ almost surely. If $a=0$, noting $n[0,0]=1$, for any $\delta \in (0, \lambda_*)$,
$
\mathbb{P}\left(n[0, at] \ge e^{(\lambda_* - \delta)bt} \right) \rightarrow 0
$
as $t \rightarrow \infty$.
If $a>0$, choosing $\delta \in (0, \lambda_*(b-a)/(b+a))$,
$$
\mathbb{P}\left(n[0, at] \ge e^{(\lambda_* - \delta)bt} \right) \le \mathbb{P}\left(n[0, at] \ge e^{(\lambda_* + \delta)at} \right) = \mathbb{P}\left(e^{-\lambda_* a t}n[0, at] \ge e^{\delta at} \right) \rightarrow 0
$$
as $t \rightarrow \infty$, because $e^{-\lambda_* at}n[0, at]$ converges almost surely to $W$ and $W < \infty$ almost surely. Thus, \eqref{bcb} follows.
\end{proof}

For $i \in \mathbb{N}_0$, denote the birth time of the $i$-th individual born in the branching process by $B^{(i)}$ (the $0$-th individual being the root with $B^{(0)}=0$). For $i \in \mathbb{N}_0$, $s \ge 0$, define the \textbf{$\mathbf{N}$-degree} of the $i$-th individual at time $s + B^{(i)}$, denoted by $D^{(i)}(s)$, as $\Phi_1(\xi^{(i)}(s))$, where $\xi^{(i)}(\cdot)$ is the point process denoting times at which the $i$-th individual gives birth, measured relative to its own birth time (i.e. $\xi^{(i)}(s')=k$ for some $s' \ge 0$, $k \in \mathbb{N}_0$, if the $i$-th individual has $k$ children at time $s' + B^{(i)}$). The name $N$-degree signifies the fact that the degree (number of children) of individual $i$ is measured via a function of the point process $\xi^{(i)}$ (recall $N(t) := \Phi_1(\xi(t)), t \ge 0$). Note that $\{D^{(i)}(\cdot) : i \in \mathbb{N}_0\}$ are i.i.d. processes. 

For $a \ge 0$, $s \ge 0$, $i \in \mathbb{N}$, let $B^{(i)}_{a}$ be the birth time of the $i$-th individual born at or after time $a$, and let $D^{(i)}_{a}(s)$ be the $N$-degree of this individual at time $s + B^{(i)}_{a}$. Again, note that for any $a \ge 0$, $\{D^{(i)}_a(\cdot) : i \in \mathbb{N}\}$ are i.i.d. processes, each having the same distribution as $D^{(0)}(\cdot)$.

For any $0 \le a < b < c$, $n[a,b]>0$, define 
\begin{equation}\label{dmaxdef}
D^{max}_{a,b}(c) := \sup\{D^{(i)}_a(c- B^{(i)}_a) : i \in \mathbb{N} \text{ such that } B^{(i)}_a \le b\}.
\end{equation}
We take the convention $D^{max}_{a,b}(c) = 0$ if $c \le b$ or $n[a,b]=0$. 

The following lemma plays a similar role for our model as that played by Lemma 5.4 in \cite{DM}. It furnishes precise asymptotics for the maximum $N$-degree of all the individuals born in appropriately scaled time intervals, under a `continuity assumption' on $\K(\cdot)$.

\begin{lemma}\label{MDPsmall}
Assume $\Phi_2(\infty) = \infty$, $f$ satisfies Assumption \eqref{eqn:prop-under-lamb} and $f(k) \rightarrow \infty$ as $k \rightarrow \infty$. Also assume that the following holds:
\begin{equation}\label{Kcont}
\lim_{\delta \downarrow 0} \limsup_{t \rightarrow \infty} \frac{\K((1+\delta)t)}{\K(t)} = 1.
\end{equation}
Then for any $0 \le a < b$ and any $\eta \in (0, \sqrt{2\lambda_*b}/2)$,
\begin{align*}
&\mathbb{P}\left(D^{max}_{a\K(t),b\K(t)}\left(t- \eta\K(t)\right) \ge t- b\K(t) + (\sqrt{2\lambda_*b} - 2\eta)\K(t)\right) \rightarrow 1,\\
&\mathbb{P}\left(D^{max}_{a\K(t),b\K(t)}\left(t + \eta\K(t)\right) \le t- a\K(t) + (\sqrt{2\lambda_*b} + 2\eta)\K(t)\right) \rightarrow 1,
\end{align*}
as $t \rightarrow \infty$.
\end{lemma}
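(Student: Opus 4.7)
The plan is to combine the CTBP growth rate from Lemma \ref{birthcontrol} with the moderate deviation principle from Lemma \ref{MDPN}: among the $\sim e^{\lambda_* b\K(t)}$ individuals born by time $b\K(t)$, each has $\mathbb{P}(N(s) \ge s + c\K(s)) \approx e^{-c^2\K(s)/2}$, and the two exponential scales balance at the shift $c = \sqrt{2\lambda_* b}$ appearing in the statement. Since $f(k) \to \infty$ forces $\K(t) = o(t)$ by Stolz-Cesaro (the incremental ratio $(1/f^2)/(1/f) = 1/f$ tends to $0$), for any threshold $y$ with $y/t \to 1$ the continuity assumption \eqref{Kcont} yields $\K(y)/\K(t) \to 1$; this justifies all replacements of $\K(y_\pm)$ by $\K(t)$ in the exponents below.

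For the upper bound I will set $y_+ := t - a\K(t) + (\sqrt{2\lambda_* b} + 2\eta)\K(t)$ and $\sigma_+ := t + (\eta - a)\K(t)$. Each offspring process $\xi^{(i)}$ is independent of those of its strict ancestors, which determine $B^{(i)}_{a\K(t)}$, so $D^{(i)}_{a\K(t)}(\cdot)$ has marginal law $N(\cdot)$ and is independent of $B^{(i)}_{a\K(t)}$. Using monotonicity of $N$ to replace the elapsed time by its maximum $\sigma_+$, a union bound and the tower property give
\[
\mathbb{P}\left(D^{max}_{a\K(t),b\K(t)}(t + \eta\K(t)) \ge y_+\right) \le \mathbb{E}[n[0, b\K(t)]] \cdot \mathbb{P}(N(\sigma_+) \ge y_+).
\]
The first factor is $O(e^{\lambda_* b\K(t)})$ via the bound $m := \sup_t \mathbb{E}(e^{-\lambda_* t}|\BP(t)|) < \infty$ already used in the proof of Lemma \ref{birthcontrol}. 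For the second, $(y_+ - \sigma_+)/\K(y_+) \to \sqrt{2\lambda_* b} + \eta$, and a monotonicity sandwich in Lemma \ref{MDPN} gives $\mathbb{P}(N(\sigma_+) \ge y_+) \le \exp(-(\tfrac{(\sqrt{2\lambda_* b} + \eta)^2}{2} - \eps)\K(t))$ for any $\eps > 0$ and $t$ large. Since $\tfrac{(\sqrt{2\lambda_* b} + \eta)^2}{2} - \lambda_* b = \eta\sqrt{2\lambda_* b} + \eta^2/2 > 0$, the product vanishes.

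For the lower bound I will set $y_- := t - b\K(t) + (\sqrt{2\lambda_* b} - 2\eta)\K(t)$ and $s_- := t - (b + \eta)\K(t)$. Fixing $\delta \in (0, \lambda_*(b-a)/(b+a))$, Lemma \ref{birthcontrol}(ii) gives $\mathbb{P}(\Omega_t) \to 1$ where $\Omega_t := \{n[a\K(t), b\K(t)] \ge e^{(\lambda_* - \delta)b\K(t)}\}$. Writing $\mathcal{I}(t) := \{i : B^{(i)}_{a\K(t)} \in [a\K(t), b\K(t)]\}$ and conditioning on the $\sigma$-algebra $\mathcal{G}$ generated by all $\xi^{(j)}$ with $j$ a strict ancestor of some individual in $\mathcal{I}(t)$ (which determines all the relevant $B^{(i)}_{a\K(t)}$ and $\mathcal{I}(t)$ itself without touching the $\xi^{(i)}$ for $i \in \mathcal{I}(t)$), the processes $\{D^{(i)}_{a\K(t)}(\cdot)\}_{i \in \mathcal{I}(t)}$ are conditionally i.i.d.\ copies of $N(\cdot)$. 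Monotonicity and $B^{(i)}_{a\K(t)} \le b\K(t)$ give that each exceeds $y_-$ at the relevant elapsed time with probability at least $p_t := \mathbb{P}(N(s_-) \ge y_-)$, and a Lemma \ref{MDPN} sandwich yields $p_t \ge \exp(-(\tfrac{(\sqrt{2\lambda_* b} - \eta)^2}{2} + \eps)\K(t))$. Thus
\[
\mathbb{P}\left(D^{max}_{a\K(t), b\K(t)}(t - \eta\K(t)) < y_-, \; \Omega_t\right) \le \mathbb{E}\left[(1 - p_t)^{n[a\K(t), b\K(t)]} \mathbb{1}_{\Omega_t}\right] \le \exp\left(-p_t \cdot e^{(\lambda_* - \delta)b\K(t)}\right),
\]
and the coefficient $(\lambda_* - \delta)b - \tfrac{(\sqrt{2\lambda_* b} - \eta)^2}{2} = -\delta b + \eta\sqrt{2\lambda_* b} - \eta^2/2$ of $\K(t)$ inside the outer exponential is positive for $\delta, \eps$ small (using $\eta < \sqrt{2\lambda_* b}/2$), so the RHS tends to zero.

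The hard part will be making the conditional independence above fully rigorous: naively conditioning on the entire tree at time $b\K(t)$ would leak partial information about each $\xi^{(i)}$ through the first few birth times of $i$'s children, breaking the i.i.d.\ picture. The workaround is to use the ancestor $\sigma$-algebra $\mathcal{G}$ described above, or equivalently to decompose $D^{(i)}_{a\K(t)}(c - B^{(i)}_{a\K(t)}) = \Phi_1(A_i) + N_{A_i}(s_-)$ with $A_i := \xi^{(i)}(b\K(t) - B^{(i)}_{a\K(t)})$ the individual's offspring count at time $b\K(t)$, note $\Phi_1(A_i) = O(\K(t))$ is negligible compared to $y_- \sim t$, and use that $N_{A_i}$ satisfies the same MDP as $N$ uniformly over such bounded $A_i$. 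A routine additional care point is converting Lemma \ref{MDPN} from an asymptotic limit into uniform tail bounds via monotonicity sandwiches in the decay parameter, with \eqref{Kcont} allowing $\K(y_\pm)$ to be replaced by $\K(t)$ throughout.
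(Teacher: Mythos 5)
Your plan---combining Lemma \ref{birthcontrol} with Lemma \ref{MDPN} to balance the two exponential scales at the shift $\sqrt{2\lambda_*b}\,\K(t)$---matches the paper, and the first-moment bound $\mathbb{E}[n[0,b\K(t)]]\le m\,e^{\lambda_*b\K(t)}$ in your upper bound is a valid (arguably cleaner) variant of the paper's union bound on the high-probability event from \eqref{bca}. The gap is in the lower-bound independence step. Your $\sigma$-algebra $\mathcal{G}$ does \emph{not} leave the $\xi^{(i)}$ with $i\in\mathcal{I}(t)$ untouched: an individual born during $[a\K(t),b\K(t)]$ can itself be the parent of another individual born in the same window, making it simultaneously a member of $\mathcal{I}(t)$ and a strict ancestor of another member, so its offspring process belongs to $\mathcal{G}$. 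More fundamentally, the cardinality of $\mathcal{I}(t)$ is a functional of exactly those offspring processes, so conditioning on $\mathcal{I}(t)$ does constrain them. The Markov-split alternative you sketch would require a version of Lemma \ref{MDPN} for $N_A(\cdot)$ uniform over $A$ with $\Phi_1(A)=O(\K(t))$, which is not in the paper, and the correction $\Phi_1(A_i)$ is typically of the same order $\K(t)$ as the shift being resolved (so ``negligible compared to $y_-\sim t$'' is not the right comparison; for a one-sided lower bound one can use $\Phi_1(A_i)\ge0$, but you flagged this rather than carrying it through).

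The paper avoids all of this with a small deterministic reindexing: set $K:=\lfloor e^{(\lambda_*-\delta)b\K(t)}\rfloor+1$; on the good event $\Omega_t := \{n[a\K(t),b\K(t)]>e^{(\lambda_*-\delta)b\K(t)}\}$, the first $K$ individuals born at or after $a\K(t)$ all satisfy $B^{(i)}_{a\K(t)}\le b\K(t)$, so by monotonicity of $D^{(i)}_{a\K(t)}(\cdot)$,
\begin{equation*}
\left\{D^{max}_{a\K(t),b\K(t)}\bigl(t-\eta\K(t)\bigr)<y_-\right\}\cap\Omega_t
\subseteq \bigcap_{i\le K}\left\{D^{(i)}_{a\K(t)}\bigl(t-\eta\K(t)-b\K(t)\bigr)<y_-\right\},
\end{equation*}
and since $\{1,\ldots,K\}$ is deterministic the right-hand probability factorizes directly from the \emph{unconditional} i.i.d.\ property of $\{D^{(i)}_{a}(\cdot)\}_{i\ge1}$ recorded just before the lemma (a strong-Markov fact for the i.i.d.\ offspring processes taken in birth order, the relevant birth-order index being a stopping time). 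No conditioning and no Markov split at $b\K(t)$ are needed, and with this substitution your exponent calculation closes the argument.
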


\begin{proof}
Fix $\eta \in (0, \sqrt{2\lambda_*b}/2)$. Choose and fix $\epsilon \in (0,1)$ and $\delta>0$ small enough such that $\delta \in (0, \lambda_*(b-a)/(b+a))$ and the following hold:
\begin{equation}\label{choice}
\frac{\sqrt{2(\lambda_* - \delta) b}}{(1+\epsilon)^2} > \sqrt{2\lambda_* b} - \eta, \ \ \frac{\sqrt{2(\lambda_* + \delta)b}}{(1-\epsilon)^2}< \sqrt{2\lambda_* b} + \eta.
\end{equation}
 For each $i \in \mathbb{N}$, and $z \in \mathbb{R}$,
\begin{multline*}
\mathbb{P}\left(D^{(i)}_{a\K(t)}\left(t - b\K(t) -\eta\K(t)\right) < t - b\K(t) +  z\K(t)\right)\\
= \mathbb{P}\left(N\left(t - b\K(t) - \eta\K(t)\right) < t - b\K(t) +  z\K(t)\right).
\end{multline*}
Writing $\beta(t) := t - b\K(t) +  z\K(t)$, we obtain
\begin{multline}\label{ms1}
\mathbb{P}\left(D^{(i)}_{a\K(t)}\left(t - b\K(t) -\eta\K(t)\right) < t - b\K(t) +  z\K(t)\right)\\
 = \mathbb{P}\left(N(\beta(t) - (z + \eta)\K(t)) < \beta(t)\right).
\end{multline}
As $\Phi_2(\infty) = \infty$ and $f(k) \rightarrow \infty$ as $k \rightarrow \infty$, a calculation similar to \eqref{re} implies $\lim_{n \rightarrow \infty} \frac{\Phi_2(n)}{\Phi_1(n)} = 0$ and hence
$$
\lim_{t \rightarrow \infty} \frac{\K(t)}{t} = \lim_{t \rightarrow \infty} \frac{\Phi_2(\Phi_1^{-1}(t))}{\Phi_1(\Phi_1^{-1}(t))} = 0.
$$
Thus, $\lim_{t \rightarrow \infty} \frac{\beta(t)}{t} = 1$.
By Assumption \eqref{Kcont}, there exists $\delta_0>0, t_0>0$ such that for any $t \ge t_0$,
\begin{equation*}
 \frac{\K((1+\delta_0)t)}{\K(t)} < (1+ \epsilon).
\end{equation*}
Take $t_1 \ge t_0$ such that
$
\frac{1}{1+\delta_0} \le \frac{t}{\beta(t)} \le (1+\delta_0)
$
and $\beta(t) \ge t_0$
for all $t \ge t_1$. Therefore, for all $t \ge t_1$,
$$
\frac{\K(t)}{\K(\beta(t))} \le \frac{\K((1+\delta_0)\beta(t))}{\K(\beta(t))} < (1+ \epsilon),
$$
and
$$
\frac{\K(\beta(t))}{\K(t)} \le \frac{\K((1+\delta_0)t)}{\K(t)} < (1+ \epsilon).
$$
Using this in \eqref{ms1}, we obtain for all $t \ge t_1$,
\begin{multline*}
\mathbb{P}\left(D^{(i)}_{a\K(t)}\left(t - b\K(t) -\eta\K(t)\right) < t - b\K(t) +  z\K(t)\right)\\
\le \mathbb{P}\left(N(\beta(t) - (z+\eta)(1+\epsilon)\K(\beta(t))) < \beta(t)\right).
\end{multline*}
By Lemma \ref{MDPN}, for any $z \ge -\eta/2$, we can choose $t_2(z) \ge t_1$ (possibly depending on $z$) such that for all $t \ge t_2(z)$,
\begin{multline*}
\mathbb{P}\left(N(\beta(t) - (z+\eta)(1+\epsilon)\K(\beta(t))) < \beta(t)\right) \le 1 - \exp\left\lbrace -\frac{(1+\epsilon)^3(z+\eta)^2 \K(\beta(t))}{2} \right\rbrace\\
\le 1 - \exp\left\lbrace -\frac{(1+\epsilon)^4(z+\eta)^2 \K(t)}{2} \right\rbrace.
\end{multline*}
Thus, from the above two bounds, for any $i \in \mathbb{N}$, any $z \ge -\eta/2$, $t \ge t_2(z)$,
\begin{multline}\label{ms2}
\mathbb{P}\left(D^{(i)}_{a\K(t)}\left(t - b\K(t) -\eta\K(t)\right) < t - b\K(t) +  z\K(t)\right)\\
 \le 1 - \exp\left\lbrace -\frac{(1+\epsilon)^4(z+\eta)^2 \K(t)}{2} \right\rbrace.
\end{multline}
Finally, recalling \eqref{dmaxdef}, we obtain, for any $z \ge -\eta/2$, $t \ge t_2(z)$,
\begin{align}\label{ms2.1}
&\mathbb{P}\left(D^{max}_{a\K(t),b\K(t)}(t - \eta \K(t)) < t- b\K(t) + z\K(t)\right)\\
&\le \mathbb{P}\left(n[a \K(t),b\K(t)] \le  e^{(\lambda_*-\delta) b\K(t)}\right)\notag\\
&\quad + \mathbb{P}\left(D^{max}_{a\K(t),b\K(t)}(t - \eta \K(t)) < t- b\K(t) + z\K(t), \ n[a \K(t),b\K(t)] >  e^{(\lambda_*-\delta) b\K(t)}\right)\notag\\
&\le \mathbb{P}\left(n[a \K(t),b\K(t)] \le  e^{(\lambda_*-\delta) b\K(t)}\right)\notag\\
&\quad + \mathbb{P}\left(D^{(i)}_{a\K(t)}(t - \eta \K(t) - B^{(i)}_{a\K(t)}) < t- b\K(t) + z\K(t)\right.\notag\\ 
&\qquad \qquad \left. \text{ and } B^{(i)}_{a\K(t)} \le b\K(t)  \text{ for all }  i \le \lfloor e^{(\lambda_*-\delta) b\K(t)}\rfloor + 1\right)\notag\\
&\le \mathbb{P}\left(n[a \K(t),b\K(t)] \le  e^{(\lambda_*-\delta) b\K(t)}\right)\notag\\
&\quad + \mathbb{P}\left(D^{(i)}_{a\K(t)}(t - \eta \K(t) - b\K(t)) < t- b\K(t) + z\K(t) \text{ for all }  i \le \lfloor e^{(\lambda_*-\delta) b\K(t)}\rfloor + 1\right)\notag\\
&\le \mathbb{P}\left(n[a \K(t),b\K(t)] \le  e^{(\lambda_*-\delta) b\K(t)}\right) + \left(1 - \exp\left\lbrace -\frac{(1+\epsilon)^4(z+\eta)^2 \K(t)}{2} \right\rbrace\right)^{e^{(\lambda_*-\delta) b\K(t)}},\notag
\end{align}
where we used the independence of $\{D^{(i)}_a(\cdot) : i \in \mathbb{N}\}$ in the last step.

As $\delta \in (0, \lambda_*(b-a)/(b+a))$, by \eqref{bcb},
\begin{equation}\label{ms2.5}
\lim_{t \rightarrow \infty}\mathbb{P}\left(n[a \K(t),b\K(t)] \le  e^{(\lambda_*-\delta) b\K(t)}\right) = 0.
\end{equation}
Moreover, for any $-\eta/2 \le z < \frac{\sqrt{2(\lambda_* - \delta) b}}{(1+\epsilon)^2} - \eta$,
\begin{multline*}
e^{(\lambda_*-\delta) b\K(t)}\log  \left(1 - \exp\left\lbrace -\frac{(1+\epsilon)^4(z+\eta)^2 \K(t)}{2} \right\rbrace\right)\\
\le -\exp\left\lbrace (\lambda_*-\delta) b\K(t) -\frac{(1+\epsilon)^4(z+\eta)^2 \K(t)}{2} \right\rbrace
 \rightarrow -\infty \ \text{ as } \ t \rightarrow \infty
\end{multline*}
and hence, for any $-\eta/2 \le z < \frac{\sqrt{2(\lambda_* - \delta) b}}{(1+\epsilon)^2} - \eta$,
\begin{equation}\label{ms3}
\lim_{t \rightarrow \infty}\left(1 - \exp\left\lbrace -\frac{(1+\epsilon)^4(z+\eta)^2 \K(t)}{2} \right\rbrace\right)^{e^{(\lambda_*-\delta) b\K(t)}}= 0.
\end{equation}
As, by \eqref{choice}, $\frac{\sqrt{2(\lambda_* - \delta) b}}{(1+\epsilon)^2} - \eta > \sqrt{2\lambda_* b} - 2\eta \ge 0$ (as $\eta \in (0, \sqrt{2\lambda_*b}/2)$), choosing $z = \sqrt{2\lambda_* b} - 2\eta$ in \eqref{ms2.1} and using \eqref{ms2.5} and \eqref{ms3}, we obtain
\begin{equation}\label{ms4}
\limsup_{t \rightarrow \infty}\mathbb{P}\left(D^{max}_{a\K(t),b\K(t)}(t - \eta \K(t)) < t- b\K(t) + (\sqrt{2\lambda_* b} - 2\eta)\K(t)\right) = 0.
\end{equation}
Now, observe that for any $i \in \mathbb{N}$, $w \in \mathbb{R}$,
\begin{multline*}
\mathbb{P}\left(D^{(i)}_{a\K(t)}(t - a\K(t) + \eta \K(t)) > t - a\K(t) +  w\K(t)\right)\\
= \mathbb{P}\left(N(t - a\K(t) + \eta \K(t)) > t - a\K(t) +  w\K(t)\right).
\end{multline*}
Writing $\gamma(t) := t-a\K(t) + w\K(t)$, we obtain
\begin{multline*}
\mathbb{P}\left(D^{(i)}_{a\K(t)}(t - a\K(t) + \eta \K(t)) > t - a\K(t) +  w\K(t)\right)\\
 = \mathbb{P}\left(N(\gamma(t) - (w-\eta)\K(t)) > \gamma(t)\right).
\end{multline*}
Again note that $\frac{\gamma(t)}{t} \rightarrow 1$ as $t \rightarrow \infty$. As before, by Assumption \eqref{Kcont}, we can obtain $t_1'$ such that for all $t \ge t_1'$,
$$
\frac{\K(t)}{\K(\gamma(t))} > (1- \epsilon), \ \  \frac{\K(\gamma(t))}{\K(t)} > (1-\epsilon).
$$
Hence, for all $t \ge t_1'$,
\begin{multline*}
\mathbb{P}\left(D^{(i)}_{a\K(t)}(t - a\K(t) + \eta \K(t)) > t - a\K(t) +  w\K(t)\right)\\
\le \mathbb{P}\left(N(\gamma(t) - (w-\eta)(1-\epsilon)\K(\gamma(t)) > \gamma(t)\right).
\end{multline*}
By Lemma \ref{MDPN}, for any $w \ge 2\eta$, we can choose $t_2'(w)>0$ (possibly depending on $w$) such that for all $t \ge t_2'(w)$,
\begin{multline*}
\mathbb{P}\left(N(\gamma(t) -  (w-\eta)(1-\epsilon)\K(\gamma(t)) > \gamma(t)\right) \le \exp\left\lbrace -\frac{ (w-\eta)^2(1-\epsilon)^3\K(\gamma(t))}{2} \right\rbrace\\
 \le \exp\left\lbrace -\frac{ (w-\eta)^2(1-\epsilon)^4\K(t)}{2} \right\rbrace.
\end{multline*}
Thus, for any $w \ge 2\eta$, $t \ge t_2'(w)$,
\begin{align}\label{ms5}
&\mathbb{P}\left(D^{max}_{a\K(t),b\K(t)}(t + \eta \K(t)) > t- a\K(t) + w\K(t)\right)\\
&\le \mathbb{P}\left(n[0,b\K(t)] > e^{(\lambda_*+\delta) b\K(t)}\right)\notag\\
&\quad + \mathbb{P}\left(D^{max}_{a\K(t),b\K(t)}(t + \eta \K(t)) > t- a\K(t) + w\K(t), \ n[0,b\K(t)] \le e^{(\lambda_*+\delta) b\K(t)}\right)\notag\\
&\le \mathbb{P}\left(n[0,b\K(t)] > e^{(\lambda_*+\delta) b\K(t)}\right)\notag\\
&\quad + \mathbb{P}\left(D^{(i)}_{a\K(t)}(t + \eta \K(t) - B^{(i)}_{a\K(t)}) > t- a\K(t) + w\K(t) \text{ for some }  i \le e^{(\lambda_*+\delta) b\K(t)}\right)\notag\\
&\le \mathbb{P}\left(n[0,b\K(t)] > e^{(\lambda_*+\delta) b\K(t)}\right)\notag\\
&\quad + \mathbb{P}\left(D^{(i)}_{a\K(t)}(t + \eta \K(t) - a\K(t)) > t- a\K(t) + w\K(t) \text{ for some }  i \le e^{(\lambda_*+\delta) b\K(t)}\right)\notag\\
&\le \mathbb{P}\left(n[0,b\K(t)] > e^{(\lambda_*+\delta) b\K(t)}\right) + \exp\left\lbrace (\lambda_*+\delta) b\K(t) -\frac{ (w-\eta)^2(1-\epsilon)^4\K(t)}{2} \right\rbrace,\notag
\end{align}
where the last step follows from the union bound.
By \eqref{bca},
\begin{equation}\label{ms5.1}
\lim_{t \rightarrow \infty}\mathbb{P}\left(n[0,b\K(t)] >  e^{(\lambda_*+\delta) b\K(t)}\right) = 0.
\end{equation}
Moreover, for any $w > \frac{\sqrt{2(\lambda_* + \delta)b}}{(1-\epsilon)^2} + \eta$,
\begin{equation}\label{ms5.2}
\lim_{t \rightarrow \infty} \exp\left\lbrace (\lambda_*+\delta) b\K(t) -\frac{ (w-\eta)^2(1-\epsilon)^4K(t)}{2} \right\rbrace = 0.
\end{equation}
As, by \eqref{choice}, $\frac{\sqrt{2(\lambda_* + \delta)b}}{(1-\epsilon)^2} + \eta< \sqrt{2\lambda_* b} + 2\eta$, therefore, choosing $w = \sqrt{2\lambda_* b} + 2\eta$ in \eqref{ms5} and using \eqref{ms5.1} and \eqref{ms5.2}, we obtain
\begin{equation}\label{ms6}
\lim_{t \rightarrow \infty}\mathbb{P}\left(D^{max}_{a\K(t),b\K(t)}(t+\eta\K(t)) > t- a\K(t) + ( \sqrt{2\lambda_* b} + 2\eta)\K(t)\right) = 0.
\end{equation}
The lemma follows from \eqref{ms4} and \eqref{ms6}.
\end{proof}

The following lemma shows, under suitable assumptions on $\K(\cdot)$, that for large $t$, if an individual is born sufficiently late, it will not have the maximum $N$-degree at time $t + \K(t)$ with high probability.
\begin{lemma}\label{indmax}
Assume $\Phi_2(\infty) = \infty$. Moreover, assume that there exist positive constants $t', D$ such that $\K(3t) \le D\K(t)$ for all $t \ge t'$. Then there exists $A_0 >0$ such that
$$
\lim_{t \rightarrow \infty}\mathbb{P}\left(D^{max}_{A_0\K(t), t}(t + \K(t)) >t\right) = 0.
$$
\end{lemma}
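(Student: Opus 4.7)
The plan is to take a union bound over individuals born in $[A_0 \K(t), t]$, combined with the martingale concentration estimate in Lemma \ref{tbd} (whose second assertion is available thanks to the hypothesis $\K(3t)\leq D\K(t)$) and the $L^1$-type control $\mathbb{E}[|\BP(s)|]\leq m\, e^{\lambda_* s}$ that was established in the proof of Lemma \ref{birthcontrol}.

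For an individual $i$ born at time $b\in[A_0\K(t),t]$, its $N$-degree at time $t+\K(t)$ is $N^{(i)}(s)$ with $s:=t+\K(t)-b$, where $N^{(i)}$ is an independent copy of $N$ (independent of the CTBP history up to time $b$). Setting $x:=b/\K(t)-1$, the event $\{N^{(i)}(s)>t\}$ coincides with $\{M^{(i)}(s)>x\K(t)\}$. For $t$ large and $A_0\geq 1$ one has $s\in[\K(t),t]\subset[0,t]$ and $x\in[A_0-1,t/\K(t)-1]\subset[0,2t/\K(t)]$, so Lemma \ref{tbd} gives
\begin{equation*}
\mathbb{P}\bigl(N^{(i)}(s)>t \,\big|\, B^{(i)}=b\bigr)\leq \exp\!\left(-\frac{(b/\K(t)-1)^2}{4D}\,\K(t)\right).
\end{equation*}
I would then partition $[A_0\K(t),t]$ geometrically via $I_k:=[3^k A_0\K(t),3^{k+1}A_0\K(t)]$ for $k\geq 0$. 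Within $I_k$ the displayed bound is at most $\exp\bigl(-(3^k A_0-1)^2\K(t)/(4D)\bigr)$, and using that each individual's $N$-process is independent of its birth time and history, one obtains (by first conditioning on birth times)
\begin{equation*}
\mathbb{P}\bigl(D^{max}_{A_0\K(t),t}(t+\K(t))>t\bigr)\leq \sum_{k\geq 0} \mathbb{E}[n[I_k]]\exp\!\left(-\frac{(3^k A_0-1)^2}{4D}\K(t)\right),
\end{equation*}
and $\mathbb{E}[n[I_k]]\leq \mathbb{E}[|\BP(3^{k+1}A_0\K(t))|]\leq m\,e^{\lambda_*\cdot 3^{k+1}A_0\K(t)}$.

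The decisive step is calibrating $A_0$ so that quadratic concentration beats exponential population growth uniformly across slices. Writing $u:=3^k A_0$, the $k$-th exponent becomes $\K(t)\bigl[3\lambda_* u-(u-1)^2/(4D)\bigr]$; for $u\geq 2$ one has $(u-1)^2/(4D)\geq u^2/(16D)$, so choosing $A_0>96 D\lambda_*$ forces $u^2/(16D)\geq 6\lambda_* u$ for every $k\geq 0$, which makes the $k$-th exponent at most $-3\lambda_* u\K(t)=-3\lambda_*\cdot 3^k A_0\K(t)$. Summing the resulting super-geometric series yields
\begin{equation*}
\mathbb{P}\bigl(D^{max}_{A_0\K(t),t}(t+\K(t))>t\bigr)\leq 2m\exp(-3\lambda_* A_0\K(t))\longrightarrow 0
\end{equation*}
as $t\to\infty$, since $\K(t)\to\infty$ by $\Phi_2(\infty)=\infty$. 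The main obstacle is really just this quantitative balance: individuals have Gaussian-type tails at scale $\sqrt{\K(t)}$ (so each slice contributes $\exp(-cu^2\K(t))$), while the population up to time $u\K(t)$ grows only like $\exp(\lambda_* u\K(t))$, and geometric slicing is the natural device to exploit that the quadratic term eventually dominates once $A_0$ is chosen large enough in terms of $\lambda_*$ and $D$.
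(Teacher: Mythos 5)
Your proof is correct, and it takes a route that is recognizably different from the paper's in two respects, both worth noting. First, the paper works on the high-probability event $\mathcal{E}_{A_\epsilon}=\{\sup_{s\ge 0}e^{-\lambda_* s}n[0,s]\le A_\epsilon\}$ (coming from \eqref{bca}) so that population counts in each slice are \emph{almost surely} bounded by $A_\epsilon e^{\lambda_*(j+1)\K(t)}$, then union-bounds on that event, and finally sends $\epsilon\to 0$. You instead run a pure first-moment argument via $\mathbb{E}[n[I_k]]\le m\,e^{\lambda_* 3^{k+1}A_0\K(t)}$, which removes the need for the auxiliary event and the $\epsilon$ juggling, at the price of invoking the $L^1$ bound $m=\sup_t\mathbb{E}(e^{-\lambda_* t}|\BP(t)|)<\infty$ established in the proof of Lemma \ref{birthcontrol}; both approaches silently import Assumption \eqref{eqn:prop-under-lamb} and $f(k)\to\infty$ from the CTBP embedding, so you are even on that score. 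Second, you use geometric slices $[3^kA_0\K(t),3^{k+1}A_0\K(t)]$ where the paper uses unit slices $[j\K(t),(j+1)\K(t)]$ for $j$ ranging up to $1+\lfloor t/\K(t)\rfloor$; unit slicing is sharper per slice but needs a sum over $O(t/\K(t))$ terms, while your geometric slicing trades tightness for a trivially convergent super-geometric tail. The paper also handles the birth time inside each slice by monotonicity ($B^{(i)}\ge j\K(t)$ implies $D^{(i)}(t+\K(t)-B^{(i)})\le D^{(i)}(t+(1-j)\K(t))$), whereas you condition on $B^{(i)}=b$ and use that the bound from Lemma \ref{tbd} is decreasing in $b$; these are equivalent. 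One small gap: you fix $A_0>96D\lambda_*$, but your inequality $(u-1)^2\ge u^2/4$ also requires $u\ge 2$, and $96D\lambda_*$ need not exceed $2$; take $A_0\ge\max\{2,\,96D\lambda_*\}$ and the calibration closes cleanly.
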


\begin{proof}
By Lemma \ref{tbd}, there exists $t^*>0$ (depending on $t',D, f_*$) such that for any $t \ge t^*$, any $j \in [2,  1 + (2t/\K(t))]$ (assuming $t^*$ is large enough that $t \ge \K(t)$ for all $t \ge t^*$) and any $i \ge 1$,
\begin{align}\label{indmax1}
\mathbb{P}\left(D^{(i)}_{j\K(t)}(t + \K(t) - B^{(i)}_{j\K(t)}) > t \right) &\le \mathbb{P}\left(N(t + \K(t)-j\K(t)) > t\right)\\
&= \mathbb{P}\left(M(t-(j-1)\K(t)) > (j-1)\K(t)\right)\nonumber\\
& \le \exp\left\lbrace -\frac{(j-1)^2}{4D} \K(t)\right\rbrace.\nonumber
\end{align}
Take any $\epsilon>0$. Define the event
$$
\mathcal{E}_A := \{\sup_{t \ge 0} e^{-\lambda_* t} n[0,t] \le A\}.
$$
By \eqref{bca}, there exists $A_{\epsilon}>0$ such that
\begin{equation}\label{indmax2}
\mathbb{P}\left(\mathcal{E}_{A_{\epsilon}}\right) \ge 1 - \epsilon.
\end{equation}
Observe that if event $\mathcal{E}_{A_{\epsilon}}$ holds, then $n[j\K(t), (j+1)\K(t)] \le A_{\epsilon} e^{\lambda_* (j+1) \K(t)}$ for all $t \ge 0$, $j \ge 0$. Thus, using \eqref{indmax1} and the union bound, we obtain for $t \ge t^*$, any $j \in [2,  1 + (2t/\K(t))]$,
\begin{equation*}
\mathbb{P}\left(D^{max}_{j\K(t),(j+1)\K(t)}(t + \K(t)) > t, \ \mathcal{E}_{A_{\epsilon}}\right) \le  A_{\epsilon}\exp\left\lbrace \lambda_* (j+1) \K(t) -\frac{(j-1)^2}{4D} \K(t)\right\rbrace.
\end{equation*}
Thus, we obtain $j_0 \ge 2$ (depending on $\lambda_*, D$ but not $\epsilon$) such that for $j \ge j_0$, $j \le 1 + (2t/\K(t))$,
\begin{equation}\label{indmax3}
\mathbb{P}\left(D^{max}_{j\K(t),(j+1)\K(t)}(t + \K(t)) > t, \ \mathcal{E}_{A_{\epsilon}}\right) \le  A_{\epsilon}\exp\left\lbrace  -\frac{(j-1)^2}{8D} \K(t)\right\rbrace.
\end{equation}
From \eqref{indmax3} and the union bound, we obtain for any $t \ge t^*$,
\begin{multline}\label{indmax4}
\mathbb{P}\left(D^{max}_{j_0\K(t), t} (t + \K(t)) >t , \ \mathcal{E}_{A_{\epsilon}} \right) \le \sum_{j=j_0}^{1+ \lfloor t/\K(t)\rfloor}\mathbb{P}\left(D^{max}_{j\K(t),(j+1)\K(t)}(t + \K(t)) > t, \ \mathcal{E}_{A_{\epsilon}}\right)\\
\le \sum_{j=j_0}^{1+ \lfloor t/\K(t)\rfloor}A_{\epsilon}\exp\left\lbrace  -\frac{(j-1)^2}{8D} \K(t)\right\rbrace \le C_{\epsilon} \exp\left\lbrace  -\frac{(j_0-1)^2}{8D} \K(t)\right\rbrace,
\end{multline}
where $C_{\epsilon}$ depends on $\epsilon, D, t^*$. As $\Phi_2(\infty) = \infty$, $\K(t) \rightarrow \infty$ as $t \rightarrow \infty$. Hence, we obtain from \eqref{indmax2} and \eqref{indmax4},
$$
\limsup_{t \rightarrow \infty} \mathbb{P}\left(D^{max}_{j_0\K(t), t}(t + \K(t)) >t \right) \le \limsup_{t \rightarrow \infty}\mathbb{P}\left(D^{max}_{j_0\K(t), t}(t+\K(t)) >t , \ \mathcal{E}_{A_{\epsilon}} \right) + \mathbb{P}\left(\mathcal{E}_{A_{\epsilon}}^c\right) \le \epsilon.
$$
As $\epsilon>0$ is arbitrary and $j_0$ does not depend on $\epsilon$, the lemma follows with $A_0 = j_0$. 
\end{proof}

\section{Proofs of the main results}
\label{sec:main-proofs}
We now sequentially prove all the main results using the results in the previous section. 

\begin{proof}[Proof of Theorem \ref{genper}]
We work with the random graph sequence $\{\mathcal{G}^*_k\}_{k \ge 0}$. As the graph sequence is obtained conditionally on a realization of the attachment sequence $\{m_i\}_{i \ge 1}$ (see Section \ref{modelspec}), without loss of generality, we assume that $\{m_i\}_{i \ge 1}$ is non-random and satisfies hypothesis \eqref{edgegrow}.

We start by obtaining a lower bound on the degree of the root vertex for large $k$.
By Lemma \ref{coupling} (i), there exists a coupling of $d_0(\cdot), \underline{d}(\cdot)$ such that $d_0(i) \ge \underline{d}(i)$ for all $i \ge 0$. Note that the process $\{\underline{d}(k) : k \ge 0\}$ evolves exactly as the degree evolution of the root in \cite{DM} using attachment function $f(\cdot)/(3C_f)$, with $n$ there (number of vertices) playing the same role as $k+1$ (number of edges) here. Thus, by Proposition 1.4 of \cite{DM},
$$
\lim_{k \rightarrow \infty} \frac{\Phi_1(\underline{d}(k))}{\log k} = \frac{1}{3C_f}, \ \ \text{almost surely}.
$$
Hence,
\begin{equation}\label{gp1}
\liminf_{k \rightarrow \infty} \frac{\Phi_1(d_0(k))}{\log k} \ge \frac{1}{3C_f}, \ \ \text{almost surely}.
\end{equation}
Define the following events for $m,n \ge 1$:
\begin{equation}\label{amdef}
A_m := \left\lbrace \frac{\Phi_1(d_0(k))}{\log k} \ge \frac{1}{4C_f} \text{ for all } k \ge m\right\rbrace
\end{equation}
and
$$
E_n := \{d_{n}(k) \ge d_0(k) \text{ for some } k \ge s_{n}\}.
$$
As in Lemma \ref{embmult}, independently of $\{\mathcal{G}^*_k: k \ge 0\}$, sample independent point processes $\{\xi_A(\cdot) :A \in \mathbb{N}_0\}$, $\xi_{A}(\cdot)$ obtained using attachment function $f_A(\cdot)$ as in Section \ref{sec:not}, and define the martingale $M_A(\cdot)$ for each $A \in \mathbb{N}_0$ as in \eqref{martdef}.
Let $\xi_1^{(n)}(\cdot) := d_0(s_n) + \xi_{d_0(s_n)}(\cdot)$ and $\xi_2^{(n)}(\cdot) := m_n + \xi_{m_n}(\cdot)$. For $t \ge 0$, define
\begin{align*}
M^{(n)}_1(t) &:= \sum_{k=d_0(s_n)}^{\xi_1^{(n)}(t)-1}\frac{1}{f(k)} - t = \sum_{k=0}^{\xi_1^{(n)}(t) - d_0(s_n)-1}\frac{1}{f_{d_0(s_n)}(k)} - t ,\\
M^{(n)}_2(t) &:= \sum_{k=m_n}^{\xi_2^{(n)}(t) -1}\frac{1}{f(k)} - t = \sum_{k=0}^{\xi_2^{(n)}(t) - m_n -1}\frac{1}{f_{m_n}(k)} - t.
\end{align*}
For $n \ge m$, noting that $s_n \ge n$, on the event $A_m$, $\Phi_1(d_0(s_n)) \ge \frac{\log s_n}{4C_f}$. Moreover, by Lemma \ref{embmult}, the Markov chains obtained by observing the continuous time process $\{(\xi_1^{(n)}(t), \xi_2^{(n)}(t)) : t \ge 0\}$ and the discrete time process $\{(d_0(k), d_n(k)) : k \ge s_n\}$ at their respective jump times have the same distribution and hence, we obtain for $n \ge m$,
\begin{multline*}
\mathbb{P}(E_n \cap A_m)\\
\le \mathbb{P}\left(M^{(n)}_2(t) + \Phi_1(m_n) \ge M^{(n)}_1(t) + \Phi_1(d_0(s_n)) \text{ for some } t \ge 0, \ \Phi_1(d_0(s_n)) \ge \frac{\log s_n}{4C_f}\right).
\end{multline*}
Note that $M^{(n)}_1(\cdot)$ has the same distribution as $M_{d_0(s_n)}(\cdot)$ and $M^{(n)}_2(\cdot)$ has the same distribution as $M_{m_n}(\cdot)$. 
Thus,
\begin{align}\label{gp3}
\mathbb{P}(E_n \cap A_m) &\le \mathbb{P}\left(\inf_{s<\infty} M_{d_0(s_n)}(s) \le -\frac{\Phi_1(d_0(s_n)) - \Phi_1(m_n)}{2}, \ \Phi_1(d_0(s_n)) \ge \frac{\log s_n}{4C_f}\right)\\
& \quad + \mathbb{P}\left(\sup_{s<\infty} M_{m_n}(s) \ge \frac{\Phi_1(d_0(s_n)) - \Phi_1(m_n)}{2}, \ \Phi_1(d_0(s_n)) \ge \frac{\log s_n}{4C_f}\right).\nonumber
\end{align}
By the hypothesis \eqref{edgegrow}, there exists $n_0 \ge m$ such that for all $n \ge n_0$, $\Phi_1(m_n) \le \frac{\log s_n}{6C_f}$. Therefore, for all $n \ge n_0$, from \eqref{gp3},
\begin{align}\label{gp4}
\mathbb{P}(E_n \cap A_m) &\le \mathbb{P}\left(\inf_{s<\infty} M_{d_0(s_n)}(s) \le -\frac{\log s_n}{24C_f}, \ \Phi_1(d_0(s_n)) \ge \frac{\log s_n}{4C_f}\right)\\
&\quad + \mathbb{P}\left(\sup_{s<\infty} M_{m_n}(s) \ge \frac{\log s_n}{24C_f}, \ \Phi_1(d_0(s_n)) \ge \frac{\log s_n}{4C_f}\right).\notag
\end{align}
Assume $n_0$ is large enough so that $\frac{\log s_n}{24C_f} \ge \max\{x_0, x_0''\}$, where $x_0$ and $x_0''$ are constants defined in Lemma \ref{conc}. By \eqref{concfin} in Lemma \ref{conc}, there exist positive constants $C, C'$ such that for all $n \ge n_0$,
\begin{align}\label{slu1}
\mathbb{P}&\left(\inf_{s<\infty} M_{d_0(s_n)}(s) \le -\frac{\log s_n}{24C_f}, \ \Phi_1(d_0(s_n)) \ge \frac{\log s_n}{4C_f}\right)\\
&\le  \mathbb{E}\left[\left(C\exp\left\lbrace-C' (\log s_n)^2\right\rbrace \right.\right.\notag\\
&\qquad \left.\left. + C\exp\left\lbrace -C'\sqrt{f_*(d_0(s_n))} (\log s_n)\right\rbrace\right) \mathbb{I}\left(\Phi_1(d_0(s_n)) \ge \frac{\log s_n}{4C_f}\right)\right]\notag\\
&\le C\exp\left\lbrace-C' (\log s_n)^2\right\rbrace  + C\exp\left\lbrace -C'\sqrt{f_*\left(\left\lfloor\Phi_1^{-1}\left(\frac{\log s_n}{4C_f}\right)\right\rfloor\right)} (\log s_n)\right\rbrace\notag\\
&\le C\exp\left\lbrace-C' (\log n)^2\right\rbrace  + C\exp\left\lbrace -C'\sqrt{f_*\left(\left\lfloor\Phi_1^{-1}\left(\frac{\log n}{4C_f}\right)\right\rfloor\right)} (\log n)\right\rbrace := a_n,\notag
\end{align}
and by \eqref{concfin2},
\begin{multline}\label{slu2}
\mathbb{P}\left(\sup_{s<\infty} M_{m_n}(s) \ge \frac{\log s_n}{24C_f}, \ \Phi_1(d_0(s_n)) \ge \frac{\log s_n}{4C_f}\right) \le C\exp\left\lbrace-C' (\log s_n)^2\right\rbrace\\
\le C\exp\left\lbrace-C' (\log n)^2\right\rbrace := b_n.
\end{multline}
Using the above estimates in \eqref{gp4}, we obtain
$$
\mathbb{P}(E_n \cap A_m) \le a_n + b_n \ \text{ for all } n \ge \max\{n_0,m\}.
$$
As $\Phi_2(\infty)< \infty$, $f_*\left(\left\lfloor\Phi_1^{-1}\left(\frac{\log n}{4C_f}\right)\right\rfloor\right) \rightarrow \infty$ as $n \rightarrow \infty$. Hence, the sequence $\{a_n + b_n\}_{n \ge n_0}$ is summable. Thus, by Borel-Cantelli lemma, almost surely, for any $m \ge 1$, on the event $A_m$, there exists $n_1$ (possibly depending on $m$) such that for any $n \ge n_1$, the maximum degree vertex in $\mathcal{G}^*_{s_n}$ is one of the first $n_1$ vertices. Thus, by Corollary \ref{distlim}, for any $m \ge 1$, a persistent hub emerges almost surely on the event $A_m$.

The theorem now follows upon noting that $\mathbb{P}\left(A_m\right) \rightarrow 1$ as $m \rightarrow \infty$.
\end{proof}

\begin{proof}[Proof of Theorem \ref{perfail}]
The weak convergence assertion is a direct consequence of Lemma \ref{fcltper}. To prove the second assertion, fix any  $A \in \mathbb{N}_0$. Conditional on $\{\mathcal{G}^*_k : k \le s_A\}$, sample two (conditionally) independent point processes $\xi_{d_0(s_A)}(\cdot)$ and $\xi_{m_A}(\cdot)$ corresponding to attachment functions $f_{d_0(s_A)}(\cdot)$ and $f_{m_A}(\cdot)$ as in Section \ref{sec:not}.

For $t \ge 0$, define
\begin{align*}
\hat M_1(t) &:= \sum_{k=d_0(s_A)}^{\xi_{d_0(s_A)}(t) + d_0(s_A) -1}\frac{1}{f(k)} - t = \sum_{j=0}^{\xi_{d_0(s_A)}(t) -1}\frac{1}{f_{d_0(s_A)}(k)} - t ,\\
\hat M_2(t) &:= \sum_{k=m_A}^{\xi_{m_A}(t) + m_A -1}\frac{1}{f(k)} - t = \sum_{k=0}^{\xi_{m_A}(t)  -1}\frac{1}{f_{m_A}(k)} - t.
\end{align*}
Write
$$
W(t)  := (\Phi_1(d_0(s_A)) + \hat M_1(t)) - (\Phi_1(m_A) + \hat M_2(t)), \ t \ge 0.
$$
In the following, we will say that a function $G: [0, \infty) \rightarrow \mathbb{R}$ changes sign $k$-times for some $k \in \mathbb{N}$ if there exist $0 < t_1 < \dots < t_{k+1} < \infty$ such that
\begin{equation}\label{chsgn}
G(t_i) G(t_{i+1}) < 0
\end{equation}
for all $1 \le i \le k$. We say that $G$ changes sign infinitely often if we can obtain an infinite sequence $\{t_i\}_{i \ge 1}$ of positive numbers such that \eqref{chsgn} holds for all $i \ge 1$.

By Lemma \ref{embmult}, 
\begin{multline}\label{perf1}
\mathbb{P}\left(d_0(\cdot) - d_A(\cdot) \text{ changes sign infinitely often}\right)
= \mathbb{P}\left(W(\cdot) \text{ changes sign infinitely often}\right).
\end{multline}
Conditional on $\{\mathcal{G}^*_k : k \le s_A\}$, $\hat M_1(\cdot)$ and $\hat M_2(\cdot)$ respectively have the same laws as $M_{d_0(s_A)}(\cdot)$ and $M_{m_A}(\cdot)$. Thus, by the functional central limit theorem proved in the first part of the theorem, 
$
\{(2n)^{-1/2}W \left(\K^{-1}(nt)\right), \ t \ge 0\}
$
converges weakly to a standard Brownian motion in $D([0,\infty)  :  \mathbb{R})$ as $n \rightarrow \infty$. By the Skorohod representation theorem \cite[Theorem 6.7]{billingsley2013convergence}, there exist $D([0,\infty)  :  \mathbb{R})$-valued random variables $W^{(n)}(\cdot)$ and $W^*(\cdot)$ defined on a common probability space such that $W^{(n)}(\cdot)$ has the same distribution as $(2n)^{-1/2}W \left(\K^{-1}(n \ \cdot)\right)$ for each $n \ge 1$, $W^*(\cdot)$ is a standard Brownian motion and $W^{(n)}(\cdot)$ converges almost surely to $W^*(\cdot)$ with respect to the Skorohod metric (see \cite[Equation (12.13), Pg 124]{billingsley2013convergence}). As $W^*(\cdot)$ is almost surely continuous, this convergence can be strengthened to almost sure uniform convergence on compact subsets of $[0, \infty)$ (see \cite[Ch. 3, Pg 124]{billingsley2013convergence}). As, almost surely, $W^*(\cdot)$ changes sign infinitely often, for any $k \in \mathbb{N}$, by this uniform convergence
$$
\mathbb{P}\left(W^{(n)}(\cdot) \text{ changes sign at least } k \text{ times}\right) \rightarrow 1
$$
as $n \rightarrow \infty$. As $W^{(n)}(\cdot)$ has the same distribution as $(2n)^{-1/2}W \left(\K^{-1}(n \ \cdot)\right)$ and $\K(\cdot)$ is strictly increasing, for any $k,n \in \mathbb{N}$,
$$
\mathbb{P}\left(W(\cdot) \text{ changes sign at least } k \text{ times}\right) = \mathbb{P}\left(W^{(n)}(\cdot) \text{ changes sign at least } k \text{ times}\right).
$$
From the above two observations, 
$$
\mathbb{P}\left(W(\cdot) \text{ changes sign at least } k \text{ times}\right) = 1.
$$
Hence, almost surely, $W(\cdot)$, and by \eqref{perf1}, $d_0(\cdot) - d_A(\cdot)$, changes sign infinitely often implying that a persistent hub does not emerge.
\end{proof}

\begin{proof}[Proof of Theorem \ref{iid}]
(i) By the Borel-Cantelli Lemma, \eqref{light} implies that  there exists a (deterministic) constant $C>0$ such that, almost surely, $\Phi_1(m_n) \le C(\log n)^{\frac{1}{\theta}}$ for all $n \in \mathbb{N}$. Thus, almost surely,
$$
\limsup_{n \rightarrow \infty} \frac{\Phi_1(m_{n})}{\log s_n} \le \limsup_{n \rightarrow \infty} \frac{C(\log n)^{\frac{1}{\theta}}}{\log s_n} \le \limsup_{n \rightarrow \infty} \frac{C(\log n)^{\frac{1}{\theta}}}{\log n} = 0.
$$
Persistence is now a direct consequence of Theorem \ref{genper}.

(ii) Define for $\epsilon>0$,
$
\Omega_{\epsilon} : = \{\{m_i\}_{i \ge 1}  :  s^{-1}(k) \ge \epsilon (k+1) \text{ for all } k \in \mathbb{N}\}.
$
For any $l \ge 0$, by Lemma \ref{coupling} (ii), the process $\{d_l(k) : k \ge s_{l-1}\}$ is stochastically dominated by the process $\{\overline{d}_l^{\epsilon}(k) : k \ge s_{l-1}\}$ (where $\overline{d}_l^{\epsilon}(\cdot)$ is defined in the lemma) on the event $\Omega_{\epsilon}$. Combining this observation with Proposition 1.4 of \cite{DM}, we obtain on $\Omega_{\epsilon}$,
$$
\lim_{k \rightarrow \infty} \frac{\Phi_1(d_l(k))}{\log k} \le \lim_{k \rightarrow \infty} \frac{\Phi_1(\overline{d}_l^{(\epsilon)}(k))}{\log k} = \frac{1}{\epsilon f(0)}, \ \ \text{almost surely}.
$$
Thus, almost surely on $\Omega_{\epsilon}$, $\Phi_1(d_l(k)) \le \frac{2\log k}{\epsilon f(0)}$ for all sufficiently large $k$.
However, by the (converse) Borel-Cantelli Lemma, \eqref{heavy} implies that there is a (deterministic) constant $C'>0$ such that, almost surely, there exist infinitely many $n \ge 1$ such that $\Phi_1(m_n) \ge C'(\log n)^{\frac{1}{\theta'}}$. Call the set of such $n$ as $\mathcal{S}$. By the strong law of large numbers, almost surely, $s_n = \sum_{i=1}^n m_i < 2\mathbb{E}(m_1) n$ for all sufficiently large $n$. Thus, almost surely on $\Omega_{\epsilon}$, for all sufficiently large $n \ge l$ such that $n \in \mathcal{S}$,
$$
\Phi_1(d_n(s_n)) = \Phi_1(m_n) \ge C'(\log n)^{\frac{1}{\theta'}} > \frac{2\log (2\mathbb{E}(m_1) n)}{\epsilon f(0)} > \frac{2\log s_n}{\epsilon f(0)} \ge \Phi_1(d_l(s_n)),
$$
and as $\Phi_1(\cdot)$ is strictly increasing, for any such $n$, we have $d_{n}(s_n) > d_l(s_n)$. Hence, almost surely, there is no persistent hub on the event $\Omega_{\epsilon}$. 
Moreover, again by the strong law of large numbers, for any $a >0$ satisfying $a \mathbb{E}(m_1) <1$, almost surely, $s_{\lfloor ak \rfloor} = \sum_{i=1}^{\lfloor ak \rfloor} m_i < k$, and thus (recalling that $s^{-1}(k)$ denotes the unique $j$ such that $s_{j-1} \le k < s_{j}$), $s^{-1}(k) > ak$, for all sufficiently large $k$. Hence, $\mathbb{P}\left(\Omega_{\epsilon}\right) \rightarrow 1$ as $\epsilon \rightarrow 0$, which completes the proof.
\end{proof}

\begin{proof}[Proof of Theorem \ref{slowvarm}]
In this proof, $C,C', C''$ will denote universal positive constants (only depending on $\alpha, \nu$) whose values might change from line to line. By using the observation $\frac{1}{(k+1)^{\alpha}} \le \int_{k}^{k+1}\frac{1}{z^{\alpha}}dz \le \frac{1}{k^{\alpha}}$ for any $k \ge 1$, one obtains
\begin{equation}\label{sl1}
Ck^{1-\alpha} \le \Phi_1(k) \le C'k^{1-\alpha}, \ k \ge 1.
\end{equation}
Moreover, $s_n \ge n$ for all $n \ge 1$.
Therefore, if $\nu < 1/(1-\alpha)$,
$$
\frac{\Phi_1(m_n)}{\log s_n} \le \frac{C'(\lfloor 1 + (\log n)^{\nu}\rfloor)^{1-\alpha}}{\log n} \rightarrow 0
$$
as $n \rightarrow \infty$. Thus, noting that $f$ satisfies the assumptions of Theorem \ref{genper} with $C_f=1$, by Theorem \ref{genper},
\begin{equation}\label{t1}
\text{A persistent hub emerges if } \ \nu < 1/(1-\alpha).
\end{equation}
Now we consider the case $\nu \ge 1/(1-\alpha)$. For $k \ge 0$, recall that $d_{max}(k)$ denotes the maximal degree in $\mathcal{G}^*_{k}$ defined in \eqref{maxdegdef}, and $\mathcal{I}^*_k$ denotes the index of the maximal degree vertex in $\mathcal{G}^*_{k}$ defined in \eqref{inddef}. 
Let $\{X^*_k, k \ge 1\}$ be independent random variables such that for any $n \ge 1$ and any $s_{n-1} < k \le s_n$, $X^*_k \sim \operatorname{Bern}(n^{-1})$. Define the process $\{d^*(k) : k \ge 0\}$ by $d^*(0) = 0$ and $d^*(k) := \sum_{j=1}^kX^*_j, \ k \ge 1$.
As for any $n \ge 1$ and any $s_{n-1} \le k < s_n$,
$$
\mathbb{P}\left(d_{max}(k+1) = d_{max}(k) + 1 \ \vert \ \mathcal{G}^*_j, \ j \le k\right) \ge \frac{(d_{max}(k) + 1)^{\alpha}}{\sum_{j=0}^{n-1}(d_j(k) + 1)^{\alpha}} \ge \frac{1}{n},
$$
by the same argument as that in Lemma \ref{coupling}, there exists a coupling such that $d_{max}(k) \ge d^*(k)$ for all $k \ge 0$. Set $r(0) = \sigma^{*2}(0) =0$, and for any $n \ge 1$ and any $s_{n-1} <k \le s_n$, define
$$
r(k) := \sum_{l=1}^{n-1}\frac{m_l}{l} + \frac{k-s_{n-1}}{n}
$$
and
$$
\sigma^{*2}(k) := \sum_{l=1}^{n-1}\frac{m_l}{l}\left(1 - \frac{1}{l}\right) + \frac{k-s_{n-1}}{n}\left(1 - \frac{1}{n}\right)
$$
(with the sums in the above taken to be zero if $n=1$). Note that, for $k \ge 1$, $\mathbb{E}(d^*(k)) = r(k)$ and $\operatorname{Var}(d^*(k)) = \sigma^{*2}(k)$. As $0 \le X^*(k) \le 1$ for $k \ge 1$, and $\sigma^{*2}(k) \rightarrow \infty$ as $k \rightarrow \infty$, writing $L(k) :=\max\{1, \log \log k\}, k \ge 1$, it follows by Kolmogorov's law of iterated logarithm \cite{kolmogorov1929uber} that
$$
\limsup_{k \rightarrow \infty} \frac{|d^*(k) - r(k)|}{(2\sigma^{*2}(k)L(\sigma^{*2}(k)))^{1/2}} =1, \ \text{almost surely}.
$$
Further, as $r(k) \ge \sigma^{*2}(k)$ for all $k \ge 0$, the above implies
\begin{equation}\label{sl2}
\frac{d^*(k)}{r(k)} \rightarrow 1, \ \text{ almost surely as } \ k \rightarrow \infty.
\end{equation}
Note that, for $l \ge 2$,
$$
\frac{m_l}{l} \ge \int_{l}^{l+1} \frac{\lfloor 1 + (\log l)^{\nu}\rfloor}{z} dz \ge \int_{l}^{l+1} \frac{(\log l)^{\nu}}{z} dz \ge \int_{l}^{l+1} \frac{(\log (z/2))^{\nu}}{z} dz
$$
and hence, there exists a universal constant $C_r>0$ such that for $n \ge 2$,
\begin{equation}\label{sl3}
r(s_n) \ge \int_2^{n+1} \frac{(\log (z/2))^{\nu}}{z} dz \ge C_r(\log n)^{\nu+1}.
\end{equation}
Define the following events (similarly as in the proof of Theorem \ref{genper}) for $m, n \ge 1$:
$$
\tilde{A}_m := \left \lbrace d_{max}(s_k) \ge \frac{C_r}{2} (\log k)^{\nu+1} \text{ for all } k \ge m  \right \rbrace
$$
and
$$
\tilde{E}_n :=\{d_n(k) \ge d_{\mathcal{I}^*_{s_n}}(k) \text{ for some } k \ge s_{n}\}.
$$
By the coupling of $d_{max}(\cdot)$ and $d^*(\cdot)$, \eqref{sl2} and \eqref{sl3},
\begin{equation}\label{sl4}
\mathbb{P}(\tilde{A}_m) \rightarrow 1 \ \text{ as } m \rightarrow \infty.
\end{equation}
Independently of $\{\mathcal{G}^*_k: k \ge 0\}$, sample independent point processes $\{\xi_A(\cdot) :A \in \mathbb{N}_0\}$, $\xi_{A}(\cdot)$ obtained using attachment function $f_A(\cdot)$, and define the martingale $M_A(\cdot)$ for each $A \in \mathbb{N}_0$ as in \eqref{martdef}. By the same argument used to deduce \eqref{gp3}, for $n \ge m \ge 1$,
\begin{align}\label{sl5}
\mathbb{P}\left(\tilde{E}_n \cap \tilde{A}_m\right) &\le \mathbb{P}\left(\inf_{s<\infty} M_{d_{max}(s_n)}(s) \le -\frac{\Phi_1(d_{max}(s_n)) - \Phi_1(m_n)}{2}, \ d_{max}(s_n) \ge \frac{C_r}{2} (\log n)^{\nu+1}\right)\\
& \quad + \mathbb{P}\left(\sup_{s<\infty} M_{m_n}(s) \ge \frac{\Phi_1(d_{max}(s_n)) - \Phi_1(m_n)}{2}, \ \d_{max}(s_n) \ge \frac{C_r}{2} (\log n)^{\nu+1}\right).\nonumber
\end{align}
If $d_{max}(s_n) \ge \frac{C_r}{2} (\log n)^{\nu+1}$, then using \eqref{sl1},
$$
\Phi_1(d_{max}(s_n)) \ge \Phi_1\left(\frac{C_r}{2} (\log n)^{\nu+1}\right) \ge  C\left(\log n\right)^{(\nu+1)(1-\alpha)}.
$$
As, by \eqref{sl1}, $\Phi_1(m_n) \le C (\log n)^{\nu (1-\alpha)}$ for $n \ge 1$, there exists $n_0 \in \mathbb{N}_0$ such that for all $m \ge 1$ and all $n \ge \max\{n_0,m\}$,
$$
\Phi_1(d_{max}(s_n)) - \Phi_1(m_n) \ge C'\left(\log n\right)^{(\nu+1)(1-\alpha)}.
$$
Assume $n_0$ is large enough so that the lower bound above satisfies $C'\left(\log n\right)^{(\nu+1)(1-\alpha)} \ge \max\{x_0, x_0''\}$, where $x_0$ and $x_0''$ are constants defined in Lemma \ref{conc}.
Thus, for all $m \ge 1$ and all $n \ge \max\{n_0,m\}$, along the same lines as \eqref{slu1},
\begin{multline}\label{sl6}
\mathbb{P}\left(\inf_{s<\infty} M_{d_{max}(s_n)}(s) \le -\frac{\Phi_1(d_{max}(s_n)) - \Phi_1(m_n)}{2}, \ d_{max}(s_n) \ge \frac{C_r}{2} (\log n)^{\nu+1}\right)\\
\le C \exp\left\lbrace -C'(\log n)^{2(\nu+1)(1-\alpha)} \right\rbrace + C \exp\left\lbrace -C'\sqrt{f_*\left(\frac{C_r}{2} (\log n)^{\nu+1}\right)}(\log n)^{(\nu+1)(1-\alpha)} \right\rbrace\\
\le C \exp\left\lbrace -C'(\log n)^{2(\nu+1)(1-\alpha)} \right\rbrace + C \exp\left\lbrace -C'\sqrt{\left(\frac{C_r}{2} (\log n)^{\nu+1}\right)^{\alpha}}(\log n)^{(\nu+1)(1-\alpha)} \right\rbrace\\
\le C \exp\left\lbrace -C'(\log n)^{2(\nu+1)(1-\alpha)} \right\rbrace + C \exp\left\lbrace -C''(\log n)^{(\nu+1)\left(1-\frac{\alpha}{2}\right)} \right\rbrace.
\end{multline}
Moreover, for all $m \ge 1$ and all $n \ge \max\{n_0,m\}$, along the same lines as \eqref{slu2},
\begin{multline}\label{sl7}
\mathbb{P}\left(\sup_{s<\infty} M_{m_n}(s) \ge \frac{\Phi_1(d_{max}(s_n)) - \Phi_1(m_n)}{2}, \ d_{max}(s_n) \ge \frac{C_r}{2} (\log n)^{\nu+1}\right)\\
\le C \exp\left\lbrace -C'(\log n)^{2(\nu+1)(1-\alpha)} \right\rbrace.
\end{multline}
Using \eqref{sl6} and \eqref{sl7} in \eqref{sl5}, we obtain for all $m \ge 1$ and all $n \ge \max\{n_0,m\}$,
\begin{equation}\label{sl8}
\mathbb{P}\left(\tilde{E}_n \cap \tilde{A}_m\right) \le C \exp\left\lbrace -C'(\log n)^{2(\nu+1)(1-\alpha)} \right\rbrace + C \exp\left\lbrace -C''(\log n)^{(\nu+1)\left(1-\frac{\alpha}{2}\right)} \right\rbrace.
\end{equation}
Observe that for $\alpha \in (1/2, 1)$ and $\nu \ge 1/(1-\alpha)$,
$$
2(\nu+1)(1-\alpha) \ge 2\frac{2-\alpha}{1-\alpha} (1-\alpha) = 2(2-\alpha) > 2
$$
and
$$
(\nu+1)\left(1-\frac{\alpha}{2}\right) \ge \frac{(2-\alpha)^2}{2(1-\alpha)} >  \frac{(2- 1)^2}{2(1-0.5)} = 1.
$$
Hence, the right hand side of \eqref{sl8} is summable in $n$ for any $\nu \ge 1/(1-\alpha)$. Thus, by \eqref{sl4}, the Borel-Cantelli Lemma and Corollary \ref{distlim},
\begin{equation}\label{t2}
\text{A persistent hub emerges if } \ \nu \ge 1/(1-\alpha).
\end{equation}
The theorem now follows from \eqref{t1} and \eqref{t2}.
\end{proof}

\begin{proof}[Proof of Theorem \ref{pertree}]
Recall the continuous time embedding of $\{\mathcal{G}_n : n \ge 1\}$ into a (time-inhomogeneous) branching process $\BP(\cdot)$ given in Lemma \ref{lem:ctb-embedding-no-cp} and recall the stopping time $T_n:=\inf\{t\geq 0: |\BP(t)| =n+1\}, n \ge 0$. Fix $\delta \in (0, 1/3)$. Define the events
$$
A'_m := \left\lbrace \frac{\Phi_1(d_0(k))}{\log k} \ge \frac{1-2\delta}{\lambda_*} \text{ for all } k \ge m\right\rbrace, \ m \ge 1,
$$
and
$$
E_n := \{d_{n}(i) \ge d_0(i) \text{ for some } i \ge n\}, \ n \ge 1.
$$
By Lemma \ref{conc},
\begin{equation}\label{treeper1}
\mathbb{P}\left(\inf_{k \ge m} (\Phi_1(\xi(T_k)) - T_k) \le -\frac{\delta}{\lambda_*}\log (m+1)\right) \le \mathbb{P}\left(\inf_{s < \infty} M_s \le -\frac{\delta}{\lambda_*}\log (m+1)\right) \rightarrow 0
\end{equation}
as $m \rightarrow \infty$.

Note that $\Phi_2(\infty) < \infty$ implies $f(k) \rightarrow \infty$ as $k \rightarrow \infty$. Hence, by the same argument sketched at the beginning of the proof of Lemma \ref{birthcontrol},
$
e^{-\lambda_*t} |\BP(t)|
$
converges almost surely to a finite random variable $W$ as $t \rightarrow \infty$, and $W > 0$ almost surely. Hence, for $m \ge 0$,
\begin{align}\label{treeper2}
&\mathbb{P}\left(T_n \le \frac{1-\delta}{\lambda_*} \log (n+1) \text{ for some } n \ge m\right)\\
&\quad= \mathbb{P}\left(e^{-\lambda_*T_n} |\BP(T_n)| \ge (n+1)^{\delta} \text{ for some } n \ge m \right)\notag\\
&\quad \le \mathbb{P}\left(e^{-\lambda_*T_n} |\BP(T_n)| \ge (m+1)^{\delta} \text{ for some } n \ge m \right) \rightarrow 0\notag
\end{align}
as $m \rightarrow \infty$. Recalling that \{$d_0(n) : n \ge 0\}$ has the same distribution as $\{\xi(T_n) : n \ge 0\}$, we obtain using \eqref{treeper1} and \eqref{treeper2},
\begin{align}
\mathbb{P}((A'_m)^c) &\le \mathbb{P}\left(\Phi_1(d_0(k)) < \frac{1-2\delta}{\lambda_*} \log (k+1) \text{ for some } k \ge m\right)\\
&\le  \mathbb{P}\left(\Phi_1(\xi(T_k)) - T_k < -\frac{\delta}{\lambda_*} \log (k+1) \text{ for some } k \ge m\right)\notag\\
&\quad + \mathbb{P}\left(T_k \le \frac{1-\delta}{\lambda_*} \log (k+1) \text{ for some } k \ge m\right)\notag\\
&\le \mathbb{P}\left(\inf_{k \ge m} (\Phi_1(\xi(T_k)) - T_k) \le -\frac{\delta}{\lambda_*}\log (m+1)\right)\notag\\
&\quad + \mathbb{P}\left(T_k \le \frac{1-\delta}{\lambda_*} \log (k+1) \text{ for some } k \ge m\right) \notag\rightarrow 0\notag
\end{align}
as $m \rightarrow \infty$. Hence,
\begin{equation}\label{treeper3}
\mathbb{P}(A'_m) \rightarrow 1 \ \text{ as } \ m \rightarrow \infty.
\end{equation}
Now, along the lines of \eqref{gp3} and \eqref{gp4}, there exist positive constants $C, C'$ and $n_0 \ge m$ such that for all $n \ge n_0$, $\Phi_1(1) < \frac{\delta}{\lambda_*}\log n$ and
\begin{align*}
\mathbb{P}(E_n \cap A'_m) &\le \mathbb{P}\left(\inf_{s<\infty} M_{d_0(n)}(s) \le -\frac{\Phi_1(d_0(n)) - \Phi_1(1)}{2}, \ \Phi_1(d_0(n)) \ge \frac{1-2\delta}{\lambda_*} \log n\right)\nonumber\\
& \quad + \mathbb{P}\left(\sup_{s<\infty} M_{1}(s) \ge \frac{\Phi_1(d_0(n)) - \Phi_1(1)}{2}, \ \Phi_1(d_0(n)) \ge \frac{1-2\delta}{\lambda_*} \log n\right)\\
&\le \mathbb{P}\left(\inf_{s<\infty} M_{d_0(n)}(s) \le -\frac{1-3\delta}{2\lambda_*} \log n, \ \Phi_1(d_0(n)) \ge \frac{1-2\delta}{\lambda_*} \log n\right)\\
&\quad + \mathbb{P}\left(\sup_{s<\infty} M_{1}(s) \ge \frac{1-3\delta}{2\lambda_*} \log n, \ \Phi_1(d_0(n)) \ge \frac{1-2\delta}{\lambda_*} \log n\right)\\
&\le C\exp\left\lbrace-C' (\log n)^2\right\rbrace  + C\exp\left\lbrace -C'\sqrt{f_*\left(\left\lfloor\Phi_1^{-1}\left(\frac{1-2\delta}{\lambda_*}\log n\right)\right\rfloor\right)} (\log n)\right\rbrace
\end{align*}
which is summable. Using this observation, along with \eqref{treeper3}, the Borel-Cantelli Lemma and Corollary \ref{distlim}, we conclude that a persistent hub emerges almost surely. If $f$ satisfies condition (i) or (ii) stated in the theorem,  then Lemma \ref{checkcond} implies that Assumption \eqref{eqn:prop-under-lamb} holds and hence, a persistent hub emerges almost surely.

To prove \eqref{maxasper}, recall the quantities $B^{(i)}, \xi^{(i)}(\cdot), D^{(i)}(\cdot)$, $i \in \mathbb{N}_0$, defined just before Lemma \ref{MDPsmall}. Note that, for any $i \in \mathbb{N}_0$, by Lemma \ref{ac}, there exists a random variable $X^{(i)}$ (whose distribution is absolutely continuous with respect to Lebesgue measure) such that
\begin{equation}\label{maxasper1}
\Phi_1\left(\xi^{(i)}(T_n - B^{(i)})\right) - (T_n - B^{(i)}) \xrightarrow{a.s} X^{(i)} \ \text{ as } n \rightarrow \infty.
\end{equation}
Moreover, as $
e^{-\lambda_*t} |\BP(t)|
$
converges almost surely to a finite positive random variable $W$ (see proof of Lemma \ref{birthcontrol}),
\begin{equation}\label{maxasper2}
T_n - \frac{1}{\lambda_*}\log n \xrightarrow{a.s} -\frac{1}{\lambda_*}\log W \ \text{ as } n \rightarrow \infty.
\end{equation}
Combining \eqref{maxasper1} and \eqref{maxasper2}, we obtain
\begin{equation}\label{maxasper3}
\Phi_1\left(\xi^{(i)}(T_n - B^{(i)})\right) -  \frac{1}{\lambda_*}\log n \xrightarrow{a.s} Y^{(i)} \ \text{ as } n \rightarrow \infty,
\end{equation}
where $Y^{(i)} := X^{(i)} - B^{(i)} -\frac{1}{\lambda_*}\log W$.

Define
\begin{equation*}
\mathcal{N}^*_c(t) := \inf\{i : D^{(i)}(t-B^{(i)}) = D^{max}_{0,t}(t)\},
\end{equation*}
namely, the $\mathcal{N}^*_c(t)$-th individual is the first individual born into the branching process whose $N$-degree at time $t$ equals the maximal degree at that time. The continuous time embedding (see Section \ref{ctbp}) implies that $\{\mathcal{N}^*_c(T_n) : n \in \mathbb{N}_0\}$ has the same distribution as $\{\mathcal{I}^*_n : n \in \mathbb{N}_0\}$. Hence, as a persistent hub emerges, 
\begin{equation}\label{maxasper4}
\mathcal{N}^*_c(T_n) \xrightarrow{a.s} \mathcal{N}^* \ \text{ as } n \rightarrow \infty,
\end{equation}
for some integer valued random variable $\mathcal{N}^*$. Thus, from \eqref{maxasper3} and \eqref{maxasper4},
\begin{equation}\label{maxasper5}
\Phi_1\left(\xi^{(\mathcal{N}^*_c(T_n))}(T_n - B^{(\mathcal{N}^*_c(T_n))})\right) -  \frac{1}{\lambda_*}\log n \xrightarrow{a.s} Y^{(\mathcal{N}^*)} \ \text{ as } n \rightarrow \infty.
\end{equation}
Finally, noting that $\{d_{max}(n) : n \in \mathbb{N}_0\}$ has the same distribution as $\{\xi^{(\mathcal{N}^*_c(T_n))}(T_n - B^{(\mathcal{N}^*_c(T_n))})) : n \in \mathbb{N}_0\}$, \eqref{maxasper} follows from \eqref{maxasper5}.
\end{proof}

\begin{proof}[Proof of Theorem \ref{indextree}]
The proof adapts the approach of \cite[Theorem 1.15]{DM} with Lemmas \ref{MDPsmall} and \ref{indmax} replacing the use of Lemma 5.4 and Proposition 5.1 there, although additional technical challenges arise in the continuous time setting. Recall the quantities $B^{(i)}, D^{(i)}(\cdot), D^{max}_{a,b}(c)$, $i \in \mathbb{N}_0$, $0 \le a < b <c$, defined just before Lemma \ref{MDPsmall}. Define the continuous time analogue of the index of the maximal degree vertex as
\begin{equation*}
\mathcal{I}^*_c(t) := \inf\{B^{(i)} : D^{(i)}(t-B^{(i)}) = D^{max}_{0,t}(t)\},
\end{equation*}
that is, the birth time of the first individual born into the branching process whose $N$-degree at time $t$ equals the maximal degree at that time.

Consider the unimodal function
$$
H(u) = - u + \sqrt{2\lambda_* u}, \ u \ge 0.
$$
This function has a unique maximum at $u = u^* := \lambda_*/2$ with maximum value $H(u^*) = \lambda_*/2 = u^*$. Recall $A_0$ from Lemma \ref{indmax} and take $A_1 := \lambda_* + A_0$. 

Let $\epsilon_0 \in (0, \lambda_*/4)$ be small enough that $\max\{ H(u^*) - H(u^* - \epsilon), H(u^*) - H(u^* + \epsilon)\} < \sqrt{2\lambda_*(u^*-\epsilon)}$ for all $\epsilon \in (0, \epsilon_0)$. Fix any $\epsilon \in (0, \epsilon_0)$. Let $\zeta := \min\{ H(u^*) - H(u^* - \epsilon), H(u^*) - H(u^* + \epsilon)\}$. Partition the set $[0, u^* - \epsilon) \cup [u^* + \epsilon, A_1)$ into disjoint intervals $[u_i,v_i)$, $i \in \mathbb{J}$, of mesh $\min\{\zeta/3, u^*-\epsilon\}$ (with the interval containing zero having length equal to the mesh). Let $p = \min\{1, \zeta/12\}$. Then
\begin{align*}
\mathbb{P}&\left(\mathcal{I}^*_c(s) \in [(u^*-\epsilon)\K(t), (u^* + \epsilon)\K(t)] \text{ for all } s \in [t-p\K(t), t + p \K(t)]\right)\\
&\quad \ge \mathbb{P}\left(D^{max}_{(u^* - \epsilon)\K(t), u^*\K(t)}(s) \ge t + (H(u^*) - \zeta/3)\K(t),\right.\\
&\qquad \qquad \left. D^{max}_{u_i\K(t), v_i\K(t)}(s) \le t + (H(v_i) + \zeta/2)\K(t) \text{ for all } i \in \mathbb{J},\right.\\
&\qquad \qquad \left. D^{max}_{A_1\K(t), t}(s) \le t, \text{ for all } s \in [t-p\K(t), t + p \K(t)]\right)\\
&\quad \ge \mathbb{P}\left(D^{max}_{(u^* - \epsilon)\K(t), u^*\K(t)}(t - p\K(t)) \ge t + (H(u^*) - \zeta/3)\K(t),\right.\\
&\qquad \qquad \left. D^{max}_{u_i\K(t), v_i\K(t)}(t + p\K(t)) \le t + (H(v_i) + \zeta/2)\K(t) \text{ for all } i \in \mathbb{J},\right.\\
&\qquad \qquad \left. D^{max}_{A_1\K(t), t}(t + p\K(t)) \le t\right)\\
& \quad = \mathbb{P}\left(D^{max}_{(u^* - \epsilon)\K(t), u^*\K(t)}(t - p\K(t)) \ge t + (H(u^*) - \zeta/3)\K(t)\right) \\
& \qquad \times \prod_{i \in \mathbb{J}} \mathbb{P}\left(D^{max}_{u_i\K(t), v_i\K(t)}(t + p\K(t)) \le t + (H(v_i) + \zeta/2)\K(t)\right)\\
& \qquad \times \mathbb{P}\left(D^{max}_{A_1\K(t), t}(t + p\K(t)) \le t \right)\\
& \quad \ge \mathbb{P}\left(D^{max}_{(u^* - \epsilon)\K(t), u^*\K(t)}(t - (\zeta/6)\K(t)) \ge t + (H(u^*) - \zeta/3)\K(t)\right) \\
& \qquad \times \prod_{i \in \mathbb{J}} \mathbb{P}\left(D^{max}_{u_i\K(t), v_i\K(t)}(t + (\zeta/12)\K(t)) \le t + (H(v_i) + \zeta/2)\K(t)\right)\\
& \qquad \times \mathbb{P}\left(D^{max}_{A_1\K(t), t}(t + \K(t)) \le t \right).
\end{align*}
The first two terms in the product converge to one as $t \rightarrow \infty$ on respectively taking $\eta = \zeta/6, a=u^*-\epsilon, b=u^*$ and $\eta =  \zeta/12, a=u_i, b=v_i$ for each $i \in \mathbb{J}$ in Lemma \ref{MDPsmall}. Note that our choice of $\epsilon_0$ and the length of the interval containing zero in the partition ensure that $\eta \in (0, \sqrt{2\lambda_*b}/2)$ (for corresponding $b$) in each such application of Lemma \ref{MDPsmall}. The last term converges to one as $t \rightarrow \infty$ by Lemma \ref{indmax} upon noting that $A_1 > A_0$. Hence, we obtain
\begin{equation}\label{indextree1}
\mathbb{P}\left(\mathcal{I}^*_c(s) \in [(u^*-\epsilon)\K(t), (u^* + \epsilon)\K(t)] \text{ for all } s \in [t-p\K(t), t + p \K(t)]\right) \rightarrow 1,  \text{ as } t \rightarrow \infty.
\end{equation}
As $
e^{-\lambda_*t} |\BP(t)|
$
converges almost surely to a finite positive random variable $W$ (see proof of Lemma \ref{birthcontrol}), for any $\delta>0$,
\begin{equation}\label{indextree2}
\mathbb{P}\left(T_n \in \left[\frac{1}{\lambda_*} \log n - \delta\K\left(\frac{1}{\lambda_*} \log n\right), \frac{1}{\lambda_*} \log n + \delta\K\left(\frac{1}{\lambda_*} \log n\right)\right]\right) \rightarrow 1, \text{ as } n \rightarrow \infty.
\end{equation}
Therefore,
\begin{align}\label{indextree3}
&\mathbb{P}\left(\mathcal{I}^*_c(T_n) \in \left[(u^*-\epsilon)\K\left(\frac{1}{\lambda_*} \log n\right), (u^* + \epsilon)\K\left(\frac{1}{\lambda_*} \log n\right)\right]\right)\\
&\ge \mathbb{P}\left(\mathcal{I}^*_c(s) \in \left[(u^*-\epsilon)\K\left(\frac{1}{\lambda_*} \log n\right), (u^* + \epsilon)\K\left(\frac{1}{\lambda_*} \log n\right)\right]\right.\nonumber\\
&\qquad\qquad \left. \text{ for all } s \in \left[\frac{1}{\lambda_*} \log n - p\K\left(\frac{1}{\lambda_*} \log n\right), \frac{1}{\lambda_*} \log n + p\K\left(\frac{1}{\lambda_*} \log n\right)\right]\right)\nonumber\\
&\quad - \mathbb{P}\left(T_n \notin \left[\frac{1}{\lambda_*} \log n - p\K\left(\frac{1}{\lambda_*} \log n\right), \frac{1}{\lambda_*} \log n + p\K\left(\frac{1}{\lambda_*} \log n\right)\right]\right) \rightarrow 1\nonumber
\end{align}
as $n \rightarrow \infty$ upon taking $t = \frac{1}{\lambda_*} \log n$ in \eqref{indextree1} and using \eqref{indextree2} with $\delta = p$. Finally,
\begin{align*}
\mathbb{P}&\left(\log \mathcal{I}^*_n \notin \left[\lambda_* (u^*- 2\epsilon)\K\left(\frac{1}{\lambda_*} \log n\right), \lambda_*(u^* + 2\epsilon)\K\left(\frac{1}{\lambda_*} \log n\right)\right]\right)\\
&=\mathbb{P}\left(\mathcal{I}^*_n \notin \left[\exp\left\lbrace \lambda_* (u^*- 2\epsilon)\K\left(\frac{1}{\lambda_*} \log n\right)\right \rbrace, \exp\left\lbrace \lambda_*(u^* + 2\epsilon)\K\left(\frac{1}{\lambda_*} \log n\right)\right \rbrace\right]\right)\\
&\le \mathbb{P}\left(\mathcal{I}^*_c(T_n) \notin \left[(u^*-\epsilon)\K\left(\frac{1}{\lambda_*} \log n\right), (u^* + \epsilon)\K\left(\frac{1}{\lambda_*} \log n\right)\right]\right)\\
&\qquad + \mathbb{P}\left(n\left[0, (u^*-\epsilon)\K\left(\frac{1}{\lambda_*} \log n\right)\right] \le \exp\left\lbrace \lambda_* (u^*- 2\epsilon)\K\left(\frac{1}{\lambda_*} \log n\right)\right \rbrace\right)\\
&\qquad + \mathbb{P}\left(n\left[0,(u^* + \epsilon)\K\left(\frac{1}{\lambda_*} \log n\right)\right] \ge \exp\left\lbrace \lambda_* (u^* + 2\epsilon)\K\left(\frac{1}{\lambda_*} \log n\right)\right \rbrace\right).
\end{align*}
The first term in the above bound converges to zero as $n \rightarrow \infty$ by \eqref{indextree3}. The second term converges to zero upon taking $\delta = \lambda_*\epsilon/(u^* - \epsilon), a=0, b=1$ and $t= (u^*-\epsilon)\K\left(\frac{1}{\lambda_*} \log n\right)$ in \eqref{bcb}. The third term converges to zero by \eqref{bca}.

As $\epsilon \in (0, \epsilon_0)$ is arbitrary and $u^* = \lambda_*/2$, this proves \eqref{indexas}.

To prove \eqref{maxas}, fix any $\delta \in (0, \lambda_*/4)$. By the continuity of $H(\cdot)$, we obtain $\epsilon \in (0, \epsilon_0 \wedge \delta)$ small enough such that $|H(u) - u^*| < \delta$ for all $u \in [u^* - \epsilon, u^* + \epsilon]$. For $n \ge 1$,
\begin{align}\label{maxas1}
&\mathbb{P}\left(\Phi_1(d_{max}(n))- \frac{1}{\lambda_*}\log n - \frac{\lambda_*}{2}\K\left(\frac{1}{\lambda_*} \log n\right) > 8\delta \K\left(\frac{1}{\lambda_*} \log n\right)\right)\\
&\le \mathbb{P}\left(\mathcal{I}^*_c(T_n) \notin \left[(u^*-\epsilon/2)\K\left(\frac{1}{\lambda_*} \log n\right), (u^* + \epsilon/2)\K\left(\frac{1}{\lambda_*} \log n\right)\right]\right)\notag\\
&\quad + \mathbb{P}\left(D^{max}_{(u^* - \epsilon/2)\K\left(\frac{1}{\lambda_*} \log n\right), (u^* + \epsilon/2)\K\left(\frac{1}{\lambda_*} \log n\right)}(T_n) > \frac{1}{\lambda_*}\log n + \left(\frac{\lambda_*}{2} + 8\delta\right)\K\left(\frac{1}{\lambda_*} \log n\right)\right)\notag\\
&\le \mathbb{P}\left(\mathcal{I}^*_c(T_n) \notin \left[(u^*-\epsilon/2)\K\left(\frac{1}{\lambda_*} \log n\right), (u^* + \epsilon/2)\K\left(\frac{1}{\lambda_*} \log n\right)\right]\right)\notag\\
&\quad + \mathbb{P}\left(D^{max}_{(u^* - \epsilon/2)\K\left(\frac{1}{\lambda_*} \log n\right), (u^* + \epsilon/2)\K\left(\frac{1}{\lambda_*} \log n\right)}\left(\frac{1}{\lambda_*}\log n + \delta \K\left(\frac{1}{\lambda_*} \log n\right)\right)\right.\notag\\
&\qquad \qquad \qquad \qquad \qquad \qquad \qquad \qquad \left. > \frac{1}{\lambda_*}\log n + \left(\frac{\lambda_*}{2} + 8\delta\right)\K\left(\frac{1}{\lambda_*} \log n\right)\right)\notag\\
&\quad + \mathbb{P}\left(T_n > \frac{1}{\lambda_*}\log n + \delta \K\left(\frac{1}{\lambda_*} \log n\right)\right).\notag
\end{align}
The first and third terms in the above bound converge to zero as $n \rightarrow \infty$ using \eqref{indextree3} and \eqref{indextree2} respectively. To estimate the second term,
write $\alpha_n := \frac{1}{\lambda_*}\log n + \delta \K\left(\frac{1}{\lambda_*} \log n\right)$. Recalling $\frac{\K(t)}{t} \rightarrow 0$ as $t \rightarrow \infty$ and using Assumption C2, we obtain $n' \in \mathbb{N}$ (depending on $\delta, \epsilon$) such that for all $n \ge n'$,
$
(u^*-\epsilon)\K(\alpha_n) \le (u^* - \epsilon/2) \K\left(\frac{1}{\lambda_*} \log n\right),
$
$
(u^*+ \epsilon)\K(\alpha_n) \ge (u^* + \epsilon/2) \K\left(\frac{1}{\lambda_*} \log n\right),
$
$
(3\delta + 2\epsilon)\K(\alpha_n) \le (4\delta + 2\epsilon) \K\left(\frac{1}{\lambda_*} \log n\right)
$
and
$
u^*\K(\alpha_n) \le (u^* + \delta) \K\left(\frac{1}{\lambda_*} \log n\right).
$
Hence,
\begin{align*}
\frac{1}{\lambda_*}\log n + \left(\frac{\lambda_*}{2} + 8\delta\right)\K\left(\frac{1}{\lambda_*} \log n\right) &\ge \alpha_n + (u^* + \delta) \K\left(\frac{1}{\lambda_*} \log n\right) + (4\delta + 2\epsilon)\K\left(\frac{1}{\lambda_*} \log n\right)\\ 
&\ge \alpha_n + u^*\K(\alpha_n) + (3\delta + 2\epsilon)\K(\alpha_n)\\
&\ge \alpha_n + H(u^* + \epsilon) \K(\alpha_n) + 2(\delta + \epsilon) \K(\alpha_n)\\
&= \alpha_n - (u^* - \epsilon)\K(\alpha_n) + \left(\sqrt{2\lambda_*(u^* + \epsilon)} + 2\delta\right)\K(\alpha_n).
\end{align*}
Thus, taking $a = u^*-\epsilon, b = u^* + \epsilon$ and $\eta = \delta$ in Lemma \ref{MDPsmall}, we obtain for $n \ge n'$,
\begin{multline*}
\mathbb{P}\left(D^{max}_{(u^* - \epsilon/2)\K\left(\frac{1}{\lambda_*} \log n\right), (u^* + \epsilon/2)\K\left(\frac{1}{\lambda_*} \log n\right)}\left(\alpha_n\right) > \frac{1}{\lambda_*}\log n + \left(\frac{\lambda_*}{2} + 8\delta\right)\K\left(\frac{1}{\lambda_*} \log n\right)\right)\\
\le \mathbb{P}\left(D^{max}_{(u^*-\epsilon)\K(\alpha_n), (u^*+ \epsilon)\K(\alpha_n)}\left(\alpha_n\right) > \alpha_n - (u^* - \epsilon)\K(\alpha_n) + \left(\sqrt{2\lambda_*(u^* + \epsilon)} + 2\delta\right)\K(\alpha_n)\right)\\
\rightarrow 0, \ \text{ as } n \rightarrow \infty.
\end{multline*}
Thus, by \eqref{maxas1}, for any $\delta \in (0, \lambda_*/4)$,
\begin{equation}\label{maxas2}
\mathbb{P}\left(\Phi_1(d_{max}(n))- \frac{1}{\lambda_*}\log n - \frac{\lambda_*}{2}\K\left(\frac{1}{\lambda_*} \log n\right) > 8\delta \K\left(\frac{1}{\lambda_*} \log n\right)\right) \rightarrow 0, \ \text{ as } n \rightarrow \infty.
\end{equation}
By a similar argument, for any $\delta \in (0, \lambda_*/4)$,
\begin{equation}\label{maxas3}
\mathbb{P}\left(\Phi_1(d_{max}(n))- \frac{1}{\lambda_*}\log n - \frac{\lambda_*}{2}\K\left(\frac{1}{\lambda_*} \log n\right) < -8\delta \K\left(\frac{1}{\lambda_*} \log n\right)\right) \rightarrow 0, \ \text{ as } n \rightarrow \infty.
\end{equation}
\eqref{maxas} follows from \eqref{maxas2} and \eqref{maxas3}.
\end{proof}

\begin{proof}[Proof of Theorem \ref{unifindex}]
The proof is only outlined as it follows the same approach as that of Theorem \ref{indextree}. Note that when $f(k) = 1$ for all $k \in \mathbb{N}_0$, $\Phi_1(t) = \Phi_2(t) = \K(t) = t$ and $N(t) = \xi(t)$ for all $t \ge 0$, where $\xi(\cdot)$ is a rate one Poisson process. From large deviation principles for empirical means of i.i.d. Exponential random variables (see \cite[Section 2.2, Exercise 2.2.23]{dembo2011large}), we obtain the following analogue of Lemma \ref{MDPN}:
\begin{equation}\label{uni1}
\lim_{t \rightarrow \infty}\frac{1}{t} \log \mathbb{P}\left(N((1 - x)t) \ge t\right) = x + \log (1-x), \ x \in [0,1).
\end{equation}
Moreover, the Malthusian rate $\lambda_*$ equals $1$ in this case and $e^{-t}|\BP(t)|$ converges almost surely to a positive finite random variable as \eqref{laplace} is satisfied in this case
(see proof of Proposition 51 in \citet{bhamidi2007universal}). Thus, Lemma \ref{birthcontrol} holds.

Using \eqref{uni1} and Lemma \ref{birthcontrol}, we obtain the following analogue of Lemma \ref{MDPsmall}. Define the function
$$
w(\theta) := \theta - (1+\theta) \log (1+\theta).
$$
Then for any $0 \le a < b <1$, there exist positive constants $\eta_0, C_0$ (depending only on $b$) such that for any $\eta \in (0, \eta_0)$,
\begin{align}\label{uni2}
&\mathbb{P}\left(D^{max}_{at,bt}\left((1- \eta)t\right) \ge \left[(1-b)\left(1 + w^{-1}\left(-\frac{b}{1-b}\right)\right) - C_0\eta\right]t\right) \rightarrow 1,\\
&\mathbb{P}\left(D^{max}_{at,bt}\left((1 + \eta)t\right) \le \left[(1-a)\left(1 + w^{-1}\left(-\frac{b}{1-a}\right)\right) + C_0\eta\right]t\right) \rightarrow 1,\notag
\end{align}
as $t \rightarrow \infty$. Also, using \eqref{uni1} and Lemma \ref{birthcontrol}, we obtain $a_0 \in (0,1)$, given by the unique solution to the equation $1+ x + \log(1-x) = 0, \ x \in (0,1)$, such that for any $a >a_0$, there exists $\eta_1>0$ (depending on $a$) such that
\begin{equation}\label{uni3}
\mathbb{P}\left(D^{max}_{at,t}\left((1+ \eta_1)t\right) > t\right) \rightarrow 0, \ \text{ as } t \rightarrow \infty,
\end{equation}
which is the analogue of Lemma \ref{indmax}. Thus, using the same argument as in the proof of Theorem \ref{indextree}, we conclude that if the function
$$
\Psi(u) := (1-u)\left(1 + w^{-1}\left(-\frac{u}{1-u}\right)\right), \ u \in [0,1),
$$
has a unique maximum on the interval $[0,1)$ at some point $\hat{u}$, then
$$
\frac{\log \mathcal{I}^*_n}{\log n} \  \xrightarrow{P} \ \hat{u}, \ \text{ as } n \rightarrow \infty.
$$
Elementary calculus can be used to conclude that $\Psi(\cdot)$ indeed has a unique maximum on $[0,1)$ at $\hat{u} = 1 - \frac{1}{2\ln 2}$, proving \eqref{uni}. Moreover, $H(\hat{u}) = 1/\ln 2$. This observation, combined with calculations similar to \eqref{maxas1}, show \eqref{unimax}.
\end{proof}

\section*{Acknowledgements}
Bhamidi was partially supported by NSF grants DMS-1613072, DMS-1606839 and ARO grant W911NF-17-1-0010. Banerjee was partially supported by a Junior Faculty Development Award made by UNC, Chapel Hill. We acknowledge valuable feedback of an associate editor and an anonymous referee which led to major improvements in the presentation of this article.

 \bibliographystyle{abbrvnat}
\bibliography{persistence,pref_change_bib,scaling}

\appendix

\section{Verifying Assumptions C1, C2 and C3 for Classes I and II in Remark \ref{2classes}}\label{appver}

\textbf{Class I: } As $f_r$ is positive and regularly varying with index $\alpha \in [0, 1/2)$, 
taking $\epsilon \in (0, \alpha)$ such that $2\alpha + \epsilon< 1$ and using \cite[Theorem 1.5.6]{bingham}, we obtain
$$
\lim_{k \rightarrow \infty}\frac{f_r(k)}{k^{2\alpha + \epsilon}} = 0.
$$
Recalling $f = f_rf_b \le b_2f_r$, we conclude $\Phi_2(\infty) = \infty$, and hence, Assumption C1 holds.

Define $\Phi_{i,r} (t) := \int_0^t \frac{1}{f^i_r(z)}dz, \ t \ge 0, i=1,2$, and $\K_r(\cdot) := \Phi_{2,r} \circ \Phi_{1,r}^{-1}(\cdot)$. Note that, using a simple $u$-substitution,
$$
\K(t) := \Phi_2 \circ \Phi_1^{-1}(t) = \int_0^t \frac{1}{\bar{f}(z)}dz
$$
where $\bar{f}(\cdot) := f \circ \Phi_1^{-1}(\cdot)$. Thus, for any $\delta>0, t \ge 0$, recalling $f \ge b_1 f_r$,
\begin{equation}\label{ap1.1}
\K((1+\delta)t) = \K(t) + \int_t^{(1+\delta)t} \frac{1}{\bar{f}(z)}dz \le \K(t) + \frac{1}{b_1}\int_t^{(1+\delta)t} \frac{1}{f_r\circ \Phi_1^{-1}(z)}dz.
\end{equation}
Further, note that
$
\frac{1}{b_2} \Phi_{1,r}(s) \le \Phi_1(s) \le \frac{1}{b_1} \Phi_{1,r}(s)
$
for $s \ge 0$, and hence, $\Phi^{-1}_{1,r}(b_1s) \le \Phi^{-1}_1(s) \le \Phi^{-1}_{1,r}(b_2s), s \ge 0$. Thus,
\begin{equation}\label{ap1.2}
1 \le \frac{\Phi^{-1}_1(s)}{\Phi^{-1}_{1,r}(b_1s)} \le  \frac{\Phi^{-1}_{1,r}(b_2s)}{\Phi^{-1}_{1,r}(b_1s)}, \ s \ge 0.
\end{equation}
By Theorem 1.5.12 of \citet{bingham}, $\Phi^{-1}_{1,r}(\cdot)$ is regularly varying with index $(1-\alpha)^{-1}$ and hence,
$$
\lim_{s \rightarrow \infty} \frac{\Phi^{-1}_{1,r}(b_2s)}{\Phi^{-1}_{1,r}(b_1s)} = \left(\frac{b_2}{b_1}\right)^{1/(1-\alpha)}.
$$ 
Using this in \eqref{ap1.2}, we obtain $s_0>0$ such that for all $s \ge s_0$,
\begin{equation}\label{ap1.3}
1 \le \frac{\Phi^{-1}_1(s)}{\Phi^{-1}_{1,r}(b_1s)} \le 2 \left(\frac{b_2}{b_1}\right)^{1/(1-\alpha)}.
\end{equation}
By Theorem 1.5.2 of  \cite{bingham}, there exists $s_1 \ge s_0$ such that for all $s \ge s_1$,
$$
\left| \frac{f_r(x\Phi^{-1}_{1,r}(b_1s))}{f_r(\Phi^{-1}_{1,r}(b_1s))} - x^{\alpha}\right| < \frac{x^{\alpha}}{2} \ \text{ for all } \ x \in \left[1, 2 \left(\frac{b_2}{b_1}\right)^{1/(1-\alpha)}\right].
$$
In particular, by \eqref{ap1.3}, for all $s \ge s_1$,
\begin{equation}\label{ap1.4}
\frac{1}{2} \le \frac{f_r(\Phi^{-1}_{1}(s))}{f_r(\Phi^{-1}_{1,r}(b_1s))} \le \frac{3}{2}\left[2\left(\frac{b_2}{b_1}\right)^{1/(1-\alpha)}\right]^{\alpha} =: b(\alpha).
\end{equation}
From \eqref{ap1.1} and \eqref{ap1.4}, for $t \ge s_1$,
\begin{align}\label{ap1.5}
\K((1+\delta)t) & \le  \K(t) + \frac{2}{b_1}\int_t^{(1+\delta)t} \frac{1}{f_r\circ \Phi_{1,r}^{-1}(b_1z)}dz\\
& =  \K(t) + \frac{2}{b_1^2}\int_{b_1t}^{b_1(1+\delta)t} \frac{1}{f_r\circ \Phi_{1,r}^{-1}(z)}dz = \K(t) + \frac{2\left(\K_r(b_1(1+\delta)t) - \K_r(b_1 t)\right)}{b_1^2}.\nonumber
\end{align}
Moreover, recalling $f \le b_2 f_r$ and again using \eqref{ap1.4}, for $t \ge s_1$,
\begin{equation}\label{ap1.6}
\K(t) \ge \frac{1}{b_2}\int_{s_1}^{t} \frac{1}{f_r\circ \Phi_1^{-1}(z)}dz \ge \frac{1}{b_2 b(\alpha)}\int_{s_1}^{t}\frac{1}{f_r\circ \Phi_{1,r}^{-1}(b_1z)}dz = \frac{\K_r(b_1 t) - \K_r(b_1s_1)}{b_1 b_2 b(\alpha)}.
\end{equation}
Hence, by \eqref{ap1.5} and \eqref{ap1.6},
\begin{equation}\label{ap1.7}
1 \le \limsup_{t \rightarrow \infty} \frac{\K((1+\delta)t)}{\K(t)} \le 1 + \frac{2b_2b(\alpha)}{b_1}\limsup_{t \rightarrow \infty}\left(\frac{\K_r(b_1(1+\delta)t)}{\K_r(b_1t)} - 1\right).
\end{equation}
By Proposition 1.5.8, Theorem 1.5.7 and Theorem 1.5.12 of \cite{bingham}, $\K_r(\cdot)$ is regularly varying with index $\theta_{\alpha} := \frac{1-2\alpha}{1-\alpha}$. Using this observation in \eqref{ap1.7}, we conclude that for any $\delta >0$,
\begin{equation}\label{ap1}
1 \le \limsup_{t \rightarrow \infty} \frac{\K((1+\delta)t)}{\K(t)} \le 1 + \frac{2b_2b(\alpha)}{b_1}\left((1+\delta)^{\theta_{\alpha}} - 1\right).
\end{equation}
Taking a limit as $\delta \rightarrow 0$ in \eqref{ap1} shows that Assumption C2 is satisfied. Taking $\delta = 2$ in \eqref{ap1} shows that Assumption C3 is satisfied with $D = 2\left[1 + \frac{2b_2b(\alpha)}{b_1}\left(3^{\theta_{\alpha}} - 1\right)\right]$ and some $t'>0$.\\

\textbf{Class II: } 
From the assumptions $\sum_{k=0}^{\infty}\frac{1}{g^2(k)} = \infty$ and $\lim_{k \rightarrow \infty} h(k)/g(k) = 0$, it readily follows that $\Phi_2(\infty) = \infty$, and hence, Assumption C1 holds.
To verify Assumptions C2 and C3, extend $g,h$ as $f$ to all of $[0, \infty)$.
Note that for any $\delta \ge 0$,
\begin{equation}\label{ap2.1}
\K((1+\delta)t) = \int_0^{\Phi^{-1}_1((1+\delta)t)}\frac{1}{f^2(z)}dz = (1+\delta)\int_0^t\frac{1}{f \circ \Phi^{-1}_1((1+\delta)u)}du
\end{equation}
where the last step follows by a $u$-substitution with $u= \Phi_1(z)/(1+\delta)$. 

Take any $\delta>0$. As $h(\cdot)$ is non-negative, $g(t) \le f(t)$ for all $t \ge 0$. Moreover, as $\lim_{t \rightarrow \infty} h(t)/g(t) = 0$ we obtain $t_{\delta}>0$ such that $f(t) \le (1+\delta)g(t)$ for all $t \ge t_{\delta}$. Hence, by \eqref{ap2.1}, for all $t \ge \Phi_1(t_{\delta})$,
\begin{align*}
\K((1+\delta)t) &\le (1+\delta)\int_0^t\frac{1}{g \circ \Phi^{-1}_1((1+\delta)u)}du \le (1+\delta)\int_0^t\frac{1}{g \circ \Phi^{-1}_1(u)}du\\
& \le (1+\delta)\int_0^{\Phi_1(t_{\delta})}\frac{1}{g \circ \Phi^{-1}_1(u)}du +  (1+\delta)^2\int_{\Phi_1(t_{\delta})}^t\frac{1}{f \circ \Phi^{-1}_1(u)}du\\
&\le (1+\delta)\int_0^{\Phi_1(t_{\delta})}\frac{1}{g \circ \Phi^{-1}_1(u)}du +  (1+\delta)^2\int_{0}^t\frac{1}{f \circ \Phi^{-1}_1(u)}du\\
&= (1+\delta)\int_0^{\Phi_1(t_{\delta})}\frac{1}{g \circ \Phi^{-1}_1(u)}du +  (1+\delta)^2\K(t).
\end{align*}
Hence, for any $\delta>0$, 
$$
\limsup_{t \rightarrow \infty} \frac{\K((1+\delta)t}{\K(t)} \le (1+\delta)^2.
$$
Assumptions C2 and C3 follow from this by respectively taking $\delta \rightarrow 0$ and $\delta = 2$.

%

\end{document}